\documentclass[11pt,reqno]{amsart}
\usepackage{amsmath,amssymb,amsthm,latexsym,cite,cancel}
\usepackage[small]{caption}
\usepackage{graphicx,wasysym,overpic,tikz,color}
\usepackage{subfigure}
\usepackage{cite}
\usepackage[colorlinks=true,urlcolor=blue,
citecolor=red,linkcolor=blue,linktocpage,pdfpagelabels,
bookmarksnumbered,bookmarksopen]{hyperref}
\usepackage[italian,english]{babel}
\usepackage{units}
\usepackage{enumitem}
\usepackage[left=2.1cm,right=2.1cm,top=2.71cm,bottom=2.71cm]{geometry}
\usepackage[hyperpageref]{backref}
\usepackage{float}

\usepackage[colorinlistoftodos]{todonotes}

\makeatletter
\def\namedlabel#1#2{\begingroup
	#2%
	\def\@currentlabel{#2}%
	\phantomsection\label{#1}\endgroup
}
\makeatother

\newtheorem{theorem}{Theorem}[section]
\newtheorem{definition}[theorem]{Definition}

\newtheorem{lemma}[theorem]{Lemma}
\newtheorem{remark}[theorem]{Remark}

\newcommand{\abs}[1]{\lvert#1\rvert}
\newcommand{\norm}[1]{\lVert#1\rVert}

\newcommand{\G}{\mathcal{G}}
\newcommand{\K}{\mathcal{K}}

\newcommand{\E}{\mathrm{E}}
\newcommand{\V}{\mathrm{V}}
\newcommand{\R}{\mathbb{R}}
\newcommand{\C}{\mathbb{C}}

\tikzstyle{nodo}=[circle,draw,fill,inner sep=0pt,minimum size=%
1.5mm]

\numberwithin{equation}{section}

\title[Nonlinear magnetic Schr\"odinger equations]{Normalized solutions of nonlinear magnetic Schr\"odinger equations on metric graphs}

\author[P. d'Avenia]{Pietro  d'Avenia}

\address[P. d'Avenia]{\newline\indent
	Dipartimento di Meccanica, Matematica e Management
	\newline\indent
	Politecnico di Bari
	\newline\indent
	Via Orabona 4,  70125  Bari, Italy}
\email{\href{mailto:pietro.davenia@poliba.it}{pietro.davenia@poliba.it}}

\author[Z. He]{Zhentao He}

\address[Z. He]{\newline\indent
	School of Mathematics
	\newline\indent
	East China University of Science and Technology
	\newline\indent
	Shanghai 200237, PR China }
\email{\href{mailto:hezhentao2001@outlook.com}{hezhentao2001@outlook.com}}

\author[C. Ji]{Chao Ji}

\address[C. Ji]{\newline\indent
	School of Mathematics
	\newline\indent
	East China University of Science and Technology
	\newline\indent
	Shanghai 200237, PR China }
\email{\href{mailto:jichao@ecust.edu.cn}{jichao@ecust.edu.cn}}

\subjclass[2020]{35R02, 81Q35, 78M30,  35Q40}
\keywords{Metric graphs, Magnetic fields, Spectral theory, Normalized solutions.}

\begin{document}
	
	\begin{abstract}
		In this paper we first establish the theory of a magnetic Sobolev space $H^1_A(\mathcal{G},\mathbb{C})$ on metric graphs $\mathcal{G}$ and we prove the self-adjointness of its corresponding magnetic Schr\"odinger operator. Then, in this setting, we investigate the existence and multiplicity of normalized solutions to nonlinear magnetic Schr\"odinger equations on compact metric graphs and on noncompact metric graphs with localized nonlinearities or nonlinearities acting on whole metric graphs, covering the mass-subcritical, mass-critical, and mass-supercritical cases.
	\end{abstract}
	
	\maketitle
	
	\setcounter{tocdepth}{3}
	\begin{center}
		\begin{minipage}{12.5cm}
			\tableofcontents
		\end{minipage}
	\end{center}

	\section{Introduction}

	In this paper, we consider a magnetic Schr\"odinger operator on connected metric graphs.
	
	Quantum graphs (metric graphs equipped with differential operators) arise naturally as simplified models in mathematics, physics, chemistry, and engineering when one considers the propagation of waves of various nature through a quasi-one-dimensional (e.g., meso- or nano-scale) system that looks like a thin neighborhood of a graph. For further details on quantum graphs, one may refer to \cite{Be}.
	
	Let $\mathcal{G}=(\mathrm{V}, \mathrm{E})$ be a connected metric graph, where $\mathrm{E}$ is the set of edges and $\mathrm{V}$ is the set of vertices. As usual, each bounded edge $e\in {\rm E}$ is identified with a closed and bounded interval $I_e = [0, \ell_e]$ with $\ell_e > 0$, while each unbounded edge $e$ is identified with 
	a closed half-line $I_e = [0, +\infty)$. If $\G$ is a metric graph with a finite number of edges, its compact core $\K$ is defined as the metric subgraph of $\G$ consisting of all bounded edges of $\G$. Therefore, if $\G$ is compact, then its compact core $\K = \G$. 
	
	Throughout the paper, we assume that $\mathcal{G}$ has a finite number of edges and the compact core $\K$ is non-empty, unless otherwise specified.
	
	A connected metric graph has the natural structure of a locally compact metric space, and for any two points $x,y \in \G$, the metric $\operatorname{dist}(x,y)$ is given by the shortest distance along the edges. Moreover, a connected metric graph with a finite number of vertices is compact if and only if it contains no half-line.
	\subsection{Second order nonlinear equations on metric graphs}
	
	From a mathematical point of view, and, in particular from a variational point of view, several efforts have  been devoted to the existence and multiplicity of normalized solutions for the following nonlinear Schr\"odinger equation
	\begin{equation}
		\label{eqs}
		-u'' - \lambda u= \abs{u}^{p-2}u
	\end{equation}
	on metric graphs $\G$.\\
	
	In the mass-subcritical ($2<p<6$) and mass-critical ($p=6$) cases, the energy functional associated with \eqref{eqs} is bounded below and coercive under the mass constraint (requiring the mass to be below a certain threshold when $p=6$). This allows to use minimization methods to obtain normalized ground states, i.e., solutions with minimal energy under the mass constraint. The existence of such normalized ground states has been established both on compact metric graphs \cite{CDS,Do}, and on noncompact metric graphs \cite{ABD,ACFN,AST,AST2,AST3,Do1,NP,PS}. In addition, the existence of local minimizers was studied in \cite{PSV}.
	
	For any noncompact metric graph $\G$ with a finite number of edges and a non-empty compact core $\K$, \cite{Gn,No} introduced the following modification of \eqref{eqs}
	\begin{equation}
		\label{eqsloc}
		-u'' - \lambda u= \chi_\K\abs{u}^{p-2}u,
	\end{equation}
	where $\chi_\K$ is the characteristic function of the compact core $\K$, assuming that the nonlinearity affects only the non-empty compact core $\K$. For the existence or non-existence of normalized ground states for \eqref{eqsloc} in the mass-subcritical case, see \cite{ST,T}, and for the mass-critical case, see \cite{DT, DT2}. Moreover, in the mass-subcritical case, Serra and Tentarelli \cite{ST1}, using genus theory, proved the existence of multiple bound states with negative energy.

	In the mass-supercritical case ($p>6$), the energy functional is no longer bounded below on the mass constraint. Moreover, scaling techniques (commonly used in the analysis on $\mathbb{R}^N$) are no longer applicable, and the Pohoz\v{a}ev identity is not available on metric graphs. This creates additional difficulties when proving the boundedness of Palais-Smale sequences. Furthermore, the topological properties of the metric graphs add another layer of complexity. To address these challenges, several works have focused on the mass-supercritical case. In \cite{Bort0}, Borthwick, Chang, Jeanjean, and Soave proved the existence of bounded Palais-Smale sequences carrying Morse index type information for functionals exhibiting a minimax geometry under the mass constraint. In \cite{Cha}, Chang, Jeanjean, and Soave proved the existence of normalized non-constant bound states for \eqref{eqs} on compact metric graphs when the prescribed mass is sufficiently small, using abstract critical point theory, Morse index type information, and blow-up analysis. Subsequently, in \cite{Bort}, for arbitrarily prescribed mass, Borthwick, Chang, Jeanjean, and Soave investigated the existence of normalized solutions for \eqref{eqsloc} on noncompact metric graphs. In \cite{Do0}, Dovetta, Jeanjean, and Serra studied the existence of normalized solutions for \eqref{eqs} on periodic metric graphs, as well as on noncompact metric graphs with finitely many edges under suitable topological assumptions. 
	
	\subsection{Nonlinear magnetic Schr\"odinger equations on metric graphs}
	In several important applications, in particular those related to nanotubes, the magnetic Schr\"odinger operator on metric graphs arises in a natural way (see \cite{Har,SDD}). These applications, together with the theoretical interest in the operator, have motivated its study and the investigation of its properties in various settings (see, for instance, \cite{KS}). From a mathematical point of view, research has mainly focused on spectral theory (see \cite{BGP,IK,Kor,KK2,Pan1,Pan2} and references therein), to the best of our knowledge, only a limited literature has been devoted to existence results for nonlinear equations involving this operator, especially from a variational point of view.

	\subsubsection{The functional setting}
	Inspired by \cite{Hof}, where a first attempt in our direction was made, we consider weaker smoothness assumptions on the magnetic potential $A$ and weaker integrability conditions on both $A$ and the electric potential $V$. We first develop the theory of the {\em magnetic Sobolev space} (and prove the self-adjointness of the associated {\em magnetic Schr\"odinger operator}) on locally finite connected metric graphs $\G=(\mathrm{V},\mathrm{E})$, in the sense of the following definition.
	\begin{definition}\label{deflf}
		A connected metric graph $\G=(\mathrm{V},\mathrm{E})$ is {\em locally finite} if $\mathrm{E}$ is an at most countable set such that any bounded subset of the graph intersects at most finitely many edges; that is, for any bounded subset $B \subset \G$, the set $\{e\in\E: e \cap B\neq \emptyset\}$ is finite, where we regard edges $e \in \mathrm{E}$ as subsets of $\G$. 
	\end{definition}
	\noindent It is clear that any connected metric graph with a finite number of edges is locally finite. Moreover, if a connected metric graph $\G=(\mathrm{V},\mathrm{E})$ with at most countably many edges satisfies $\inf\limits_{e\in \mathrm{E}}\ell_e>0$ and $\operatorname{deg}(v)<+\infty$ for every $ v \in \mathrm{V}$, where $\operatorname{deg}(v)$ is the total number of edges incident at the vertex $v$, then $\G$ is locally finite. For more details on locally finite connected metric graphs, see \cite{Hof}.

	These results are of independent interest and provide a solid foundation for the investigation of nonlinear magnetic Schr\"odinger equations on metric graphs.
	
	Since a function $u: \G \to \mathbb{C}$ can be identified with a family of functions  $u_e: I_e \to \mathbb{C}$, $e\in\E$, the usual $L^p$ space on $\G$ is defined by
	$$
	L^p(\mathcal{G}, \mathbb{C}):=\bigoplus_{e \in \mathrm{E}} L^p(I_e, \mathbb{C}),
	$$
	endowed with the norm
	$$
	\norm{u}_p := \left(\sum_{e \in \mathrm{E}}\norm{u_e}_{L^p(I_e, \mathbb{C})}^p\right)^\frac{1}{p}, \quad \text { if } p \in[1, +\infty), \quad \text { and } \quad \norm{u}_{\infty}:=\sup_{e \in \mathrm{E}}\norm{u_e}_{L^{\infty}(I_e, \mathbb{C})}.
	$$
	The inner product in $L^2(\mathcal{G}, \mathbb{C})$ is
	\begin{equation*}
		(u,v)_2=\operatorname{Re} \int_\G u\bar{v}\,dx=\sum_e\operatorname{Re}\int_{I_e} u_e\bar{v}_e\,dx,
	\end{equation*}
	where $\operatorname{Re}(w)$ denotes the real part of $w \in \C$ and $\bar{w}$ is its conjugate. 
	
	The Sobolev space $H^1(\G,\C)$ is defined by
	$$
	H^1(\G,\C):=\left\{u\in C(\G,\mathbb{C}): u' \in L^2(\G,\C) \text{ and } u\in L^2(\G,\C)\right\},
	$$
	and consists of continuous functions $u : \G \to \mathbb{C}$ such that $u_e \in  H^1([0,\ell_e],\C)$ for every edge $e\in \E$ (note that $\ell_e$ may be $+\infty$). Its norm is defined as
	\begin{equation*}
		\norm{u}_{H^1(\G,\C)} = \left[\underset{e \in \E}\sum \left(\norm{u'}^2_{L^2(I_e,\C)}+\norm{u}^2_{L^2(I_e,\C)}\right)\right]^\frac{1}{2}.
	\end{equation*}
	
	Assume that $A \in L_{\rm loc}^2(\G,\R)$ and $V \in L_{\rm loc}^1(\G,\R)$, i.e., for every bounded subset $B\subset\G$ we have $A \in L^2(B, \R)$ and  $V \in L^1(B, \R)$. We define the  magnetic Sobolev space $H^1_A(\G,\C)$ by 
	$$
	H^1_A(\G,\C):=\left\{u\in C(\G,\mathbb{C}): D_Au \in L^2(\G,\C) \text{ and } \int_\G V(x)\abs{u}^2\,dx= \sum_{e\in \E}\int_{I_e} V_e(x)\abs{u_e}^2\,dx < +\infty\right\},
	$$
	where $D_A:=\displaystyle \frac{1}{i} \frac{\mathrm{d}}{\mathrm{d} x} - A(x)$. Its norm is
	\begin{equation*}
		\norm{u} :=\left( \int_\G \abs{D_Au}^2+  V(x)\abs{u}^2\,dx\right)^\frac{1}{2},
	\end{equation*}
	and the inner product is
	\begin{equation*}
		(u,v):=\operatorname{Re}\int_\G \left(D_Au\overline{D_Av}+  V(x)u\bar{v}\right)\,dx.
	\end{equation*}
	We will show in Lemma \ref{lemhilbert} that $H^1_A(\G,\C)$ is a separable Hilbert space. Accordingly, we regard $D^2_A + V(x)$ as the unique self-adjoint operator associated with the
	quadratic form 
	$$\int_\G \left(D_Au\overline{D_Av}+  V(x)u\bar{v}\right)\,dx, \quad u,v \in H^1_A(\G,\C)$$ (see Section \ref{secsa}).

	\subsubsection{Our results}
	
	We will study the existence and multiplicity of normalized solutions for the following  nonlinear magnetic Schr\"odinger equations
	\begin{equation}
		\label{eqmagcompact}
		\begin{cases}
			\displaystyle \left( \frac{1}{i} \frac{\mathrm{d}}{\mathrm{d} x} - A_e(x) \right)^2 u_e + V_e(x)u_e - \lambda u_e =  \abs{u_e}^{p-2} u_e, \quad &\forall e \in \mathrm{E}, \vspace{5pt}\\
			\displaystyle\underset{e \succ v}\sum \left(\frac{1}{i}u'_e(v) - A^\pm_e(v)u_e(v)\right)= 0, &  \forall v \in \mathrm{V},
		\end{cases}
	\end{equation}
	on compact metric graphs $\mathcal{G} = (\mathrm{V}, \mathrm{E})$ and
	\begin{equation}
		\label{eqmagloc}
		\begin{cases}
			\displaystyle\left( \frac{1}{i} \frac{\mathrm{d}}{\mathrm{d} x} - A_e(x) \right)^2 u_e + V_e(x)u_e - \lambda u_e = \chi_{\K} \abs{u_e}^{p-2} u_e, \quad &\forall e \in \mathrm{E},\vspace{5pt}\\
			\displaystyle\underset{e \succ v}\sum (\frac{1}{i}u'_e(v) - A^\pm_e(v)u_e(v))= 0, &  \forall v \in \mathrm{V},
		\end{cases}
	\end{equation}
	on noncompact metric graphs $\mathcal{G} = (\mathrm{V}, \mathrm{E})$ with a finite number of edges and a non-empty compact core $\K$.\\
	Here $p > 2$ and 
	$A: \G \to \mathbb{R}$, $V: \G \to [1, +\infty)$ are families of functions $A_e: I_e \to \mathbb{R}$ and $V_e: I_e \to [1,+\infty)$.\\
	Moreover, if $e\in\E$ and $v\in\V$, the notation $e \succ v$ means that the edge $e$ is incident at $v$, and $u'_e(v)$ (standing for $u'_e(0)$ or $-u'_e(\ell_e)$) is always the outward derivative, depending on whether the vertex $v$ is identified with $0$ or $\ell_e$. The magnetic potential $A$ is a $1d$ vector field (it changes sign when the orientation of the edge is reversed), and $A^\pm_e(v)$ stands for $A_e(0)$ or $-A_e(\ell_e)$, depending on whether the vertex $v$ is identified with $0$ or $\ell_e$.\\
	The conditions in the second line of \eqref{eqmagcompact} and \eqref{eqmagloc}, known as {\em Kirchhoff magnetic boundary conditions}, were introduced in \cite{Pa} (see also \cite{Har,RS,SDD} for details and physical models).

	Due to the variational structure of \eqref{eqmagcompact} and \eqref{eqmagloc}, we consider the energy functional $E( \cdot,\G): H^1_A(\G,\C) \to \mathbb{R}$ defined by
	\begin{equation*}
		E(u,\G) := \frac{1}{2}\norm{u}^{2}-\Psi(u,\G),
	\end{equation*}
	where
	$$\Psi(u,\G):=
	\begin{cases}
		\displaystyle\frac{1}{p}\int_\G  \abs{u}^p\,dx &\quad \text{if $\G $ is compact},\vspace{5pt}\\
		\displaystyle \frac{1}{p}\int_\K  \abs{u}^p\,dx &\quad \text{if $\G$ is noncompact with non-empty compact core $\K$},
	\end{cases}
	$$
	and we seek solutions as critical points of $E( \cdot,\G)$.
	A first result in this direction is the following.
	\begin{theorem}
		Let $\G$ be any compact metric graph or any noncompact metric graph with a finite number of edges and a non-empty compact core $\K$, and let $p > 2$. Then $E( \cdot,\G)$ has infinitely many critical points $\{u_n\}\subset H^1_A(\G,\C)$ such that $\displaystyle \lim_{n \to +\infty}E( u_n,\G)=+\infty$.
	\end{theorem}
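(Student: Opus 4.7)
The plan is to apply the symmetric mountain-pass theorem of Ambrosetti--Rabinowitz to the $C^1$, even functional $E(\cdot,\G)$ on $H^1_A(\G,\C)$ (regarded as a real Hilbert space), for which $E(0,\G)=0$. Three ingredients must be checked: a local mountain-pass geometry, a growth-to-$-\infty$ condition on an increasing sequence of finite-dimensional subspaces, and the Palais--Smale condition.

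For the geometry, the diamagnetic inequality $|(|u|)'|\le |D_A u|$ combined with $V\ge 1$ yields $\||u|\|_{H^1(\K)}\le \|u\|$, and the Sobolev embedding on each bounded edge of the compact core $\K$ gives $\|u\|_{L^p(\K)}\le C\|u\|$; hence $E(u,\G)\ge \tfrac12\|u\|^2-\tfrac{C^p}{p}\|u\|^p$, which is bounded below by some $\alpha>0$ on a small sphere $\|u\|=\rho$. For the finite-dimensional direction, for each $n\ge 1$ I would pick $n$ smooth real-valued functions $\phi_1,\dots,\phi_n$, compactly supported in the interiors of bounded edges of $\K$ and extended by $0$ to $\G$. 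These are linearly independent elements of $H^1_A(\G,\C)$, spanning an $n$-dimensional subspace $F_n$ on which $\|\cdot\|_{L^p(\K)}$ and $\|\cdot\|$ are equivalent norms; consequently $E(u,\G)\to -\infty$ as $\|u\|\to\infty$ in $F_n$.

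The main obstacle will be the Palais--Smale condition, where the noncompact case requires care. Boundedness of any PS sequence follows from the standard Ambrosetti--Rabinowitz computation using $p>2$. Passing to a weak limit $u_n\rightharpoonup u$ in $H^1_A(\G,\C)$, the diamagnetic inequality bounds $(|u_n|)$ in $H^1(\K)$ and hence in $L^\infty(\K)$; combined with $A\in L^2(\K)$ this bounds $(u_n)$ in $H^1(\K,\C)$, so the compact embedding $H^1(\K,\C)\hookrightarrow L^p(\K,\C)$ gives $u_n\to u$ strongly in $L^p(\K,\C)$. Then the identity
\[
\|u_n-u\|^2=\bigl\langle E'(u_n)-E'(u),u_n-u\bigr\rangle+\operatorname{Re}\!\int_\K\!\bigl(|u_n|^{p-2}u_n-|u|^{p-2}u\bigr)\overline{(u_n-u)}\,dx,
\]
combined with $E'(u_n)\to 0$, $u_n-u\rightharpoonup 0$, and Hölder's inequality on $L^p(\K)$, yields $u_n\to u$ in $H^1_A(\G,\C)$. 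With these three ingredients in place, the symmetric mountain-pass theorem produces a sequence of critical values $c_n\to +\infty$, proving the claim.
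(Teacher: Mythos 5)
Your argument is sound and is precisely the ``standard'' proof the paper omits: the paper simply cites \cite[Section 3.3]{W} for this theorem, and your three ingredients (small-sphere positivity via the diamagnetic inequality and Lemma \ref{lemgnsg}, divergence to $-\infty$ on finite-dimensional subspaces of functions supported in $\K$, and the Palais--Smale condition via the compact embedding $H^1_A(\G,\C)\hookrightarrow L^p(\K,\C)$ of Remark \ref{recompact}) are exactly what that machinery needs; your verification of (PS), including the identity for $\norm{u_n-u}^2$, is the same computation the paper uses later in Lemma \ref{lemunr}. One point deserves care in the write-up: in the noncompact case with localized nonlinearity, the hypothesis of the classical Ambrosetti--Rabinowitz symmetric mountain-pass theorem that $E\leq 0$ outside a large ball in \emph{every} finite-dimensional subspace is false --- for $u$ supported on the half-lines outside $\K$ one has $E(tu,\G)=\tfrac{t^2}{2}\norm{u}^2\to+\infty$ --- so you cannot quote that statement verbatim. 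This is not a fatal gap, because the proof of that theorem (and the fountain-theorem formulation in \cite{W}, via the Borsuk--Ulam intersection property) only uses the negativity condition on a nested sequence $F_1\subset F_2\subset\cdots$ of finite-dimensional subspaces, which is exactly what your choice of $F_n$ spanned by functions compactly supported in $\K$ provides; but you should either invoke such a version explicitly or, if you use the fountain theorem, supply the complementary estimate $\inf\{E(u,\G):u\in Z_k,\ \norm{u}=r_k\}\to+\infty$ on subspaces $Z_k$ of increasing codimension (which follows from the compactness of $H^1_A(\G,\C)\hookrightarrow L^p(\K,\C)$ and \cite[Lemma 3.8]{W}), since your single lower bound $\alpha>0$ on one sphere does not by itself force the critical levels to be unbounded in that formulation.
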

	Since the proof is standard (see, e.g., \cite[Section 3.3]{W}), we omit it.
	
	Next, we investigate the existence and multiplicity of normalized solutions to \eqref{eqmagcompact} and \eqref{eqmagloc} under the mass constraint
	$$\int_\G\abs{u}^2\,dx=\mu, \quad \mu>0,$$
	and thus we study the existence and multiplicity  of critical points of the functional $E( \cdot,\G)$ constrained on the $L^2$-sphere
	\begin{equation*}
		H_\mu(\G)= \left\{ u \in H^1_A(\G,\C):\int_\G \abs{u}^2\,dx = \mu \right\}.
	\end{equation*}

	To establish some of our existence results, we consider the following assumption.
	\begin{itemize}
		\item[(\namedlabel{AssG}{G})] $D^2_A + V(x)$ admits a ground state that does not vanish identically on $\K$, i.e.,
		\begin{equation}\label{l1}
			\lambda_1:=\underset{u \in H^1_A(\G,\C)\backslash\{0\}}{\inf}\frac{\norm{u}^2}{\norm{u}^2_2}
		\end{equation}
		is achieved by $\varphi_1$ and ${\varphi_1}_{\left.\right|_{\K}} \not \equiv 0$.
	\end{itemize}
	
	Moreover, since $D^2_A + V(x)$ is self-adjoint and $V \geq 1$, we have $\sigma(D^2_A + V(x)) \subset [1, +\infty)$. Thus we will also consider the following assumption.
	\begin{itemize}
		\item[(\namedlabel{AssGm}{G$_m$})] There exist $m\geq 1$ (possibly $+\infty$) distinct
		eigenvalues below $\inf\sigma_{\rm ess}(D^2_A + V(x))$ (if $\sigma_{\rm ess}(D^2_A + V(x))=\emptyset$, then we set $\inf\sigma_{\rm ess}(D^2_A + V(x))=+\infty$), i.e., $D^2_A + V(x)$ has an increasing sequence of $m$ eigenvalues $1\leq \lambda_1<  \lambda_2< \ldots<\lambda_j<\ldots$ such that, for every $j$, $\lambda_j <\inf\sigma_{\rm ess}(D^2_A + V(x))$. 
	\end{itemize}

	By the definition of essential spectrum  (see, e.g., \cite{Reed,Cheverry,DBort}), if $\lambda <\inf\sigma_{\rm ess}(D^2_A + V(x))$ is an eigenvalue, then it has finite multiplicity. 
	Our main results on the existence and multiplicity of normalized solutions to \eqref{eqmagcompact} and \eqref{eqmagloc} are as follows.
	\begin{theorem}
		\label{th2}
		Let $\G$ be any compact metric graph or any noncompact metric graph with a finite number of edges and a non-empty compact core $\K$, and let $p > 2$. Assume that \eqref{AssG} or \eqref{AssGm} holds. Then, for any $\mu > 0$, one of the following two alternatives occurs:
		\begin{enumerate}[label=\rm(\roman*),ref=\roman*]
			\item \label{th2case1} either $E(\cdot, \G)$ has a critical point $u \in H^1_A(\G,\C)$ constrained on $H_\mu(\G)$ with Lagrange multiplier $\lambda \in (-\infty, \lambda_1]$ (and $\lambda \in (-\infty, \lambda_1)$ if \eqref{AssG} holds);
			\item \label{th2case2} or, for every $\lambda \in (-\infty, \lambda_1)$, $E(\cdot, \G)$ has a critical point $u \in H^1_A(\G,\C)$ constrained on $H_\nu(\G)$ for some $0<\nu < \mu$ and the corresponding Lagrange multiplier is $\lambda$.
		\end{enumerate}
	\end{theorem}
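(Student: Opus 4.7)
The plan is to recast the mass-constrained problem into a family of unconstrained variational problems parametrized by the would-be Lagrange multiplier $\lambda$. For each $\lambda\in(-\infty,\lambda_1)$ I consider the action functional
$$J_\lambda(u) := E(u,\G) - \frac{\lambda}{2}\int_\G\abs{u}^2\,dx = \frac{1}{2}\bigl(\norm{u}^2 - \lambda\norm{u}_2^2\bigr) - \Psi(u,\G),$$
and observe that any nontrivial critical point of $J_\lambda$ on $H^1_A(\G,\C)$ is automatically a critical point of $E(\cdot,\G)$ on $H_{\norm{u}_2^2}(\G)$ with Lagrange multiplier $\lambda$. By \eqref{l1}, for $\lambda<\lambda_1$ the quadratic form $u\mapsto\norm{u}^2 - \lambda\norm{u}_2^2$ is equivalent to $\norm{u}^2$ on $H^1_A(\G,\C)$, which is the engine of all the coercivity estimates below.

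For the existence step I use a Nehari-type minimization. Setting
$$\mathcal{N}_\lambda := \bigl\{u\in H^1_A(\G,\C)\setminus\{0\}:\ \norm{u}^2 - \lambda\norm{u}_2^2 = p\,\Psi(u,\G)\bigr\},$$
one has $J_\lambda|_{\mathcal{N}_\lambda}=\bigl(\tfrac{1}{2}-\tfrac{1}{p}\bigr)(\norm{u}^2-\lambda\norm{u}_2^2)$, so $c(\lambda):=\inf_{\mathcal{N}_\lambda}J_\lambda$ is strictly positive. A minimizing sequence is bounded thanks to the equivalence recalled above, and the compactness of the embedding $H^1_A(\K,\C)\hookrightarrow L^p(\K,\C)$ (which is all that is needed, since $\Psi(\cdot,\G)$ only involves $\K$) allows passage to the limit in the nonlinear term. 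Combining this with weak lower semicontinuity of the quadratic form and the standard Nehari fibering, I obtain a minimizer $u_\lambda\in\mathcal{N}_\lambda$ which is in fact a critical point of $J_\lambda$ on the whole space; in particular $u_\lambda$ is a critical point of $E(\cdot,\G)$ constrained on $H_{\mu(\lambda)}(\G)$ with Lagrange multiplier $\lambda$, where $\mu(\lambda):=\norm{u_\lambda}_2^2>0$.

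The dichotomy is then read off from the function $\mu(\cdot)$. If $\mu(\lambda_\star)=\mu$ for some $\lambda_\star\in(-\infty,\lambda_1)$, alternative (i) holds; if $\mu(\lambda)<\mu$ for every $\lambda\in(-\infty,\lambda_1)$, alternative (ii) holds with $\nu=\mu(\lambda)$. To rule out the remaining possibility I carry out a continuity and limit analysis of $\mu(\cdot)$: an explicit Nehari-scaling of $\varphi_1$ (under \eqref{AssG}) or of a carefully chosen test function with nontrivial $L^p$-norm on $\K$ (under \eqref{AssGm}) shows that $c(\lambda)\to 0$ and $\mu(\lambda)\to 0$ as $\lambda\to\lambda_1^-$, while a direct scaling estimate yields $\mu(\lambda)\to+\infty$ as $\lambda\to-\infty$; continuity of $\mu(\cdot)$ together with the intermediate value theorem then forces $\mu\in\mu((-\infty,\lambda_1))$ whenever $\mu(\lambda_0)\geq\mu$ for some $\lambda_0$. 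Under \eqref{AssGm} without \eqref{AssG}, a weak-limit argument applied to the family $(u_\lambda)$ as $\lambda\to\lambda_1^-$ produces a critical point at $\lambda=\lambda_1$ itself, which accounts for the closed endpoint permitted in (i).

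The main technical obstacle will be the compactness of the Nehari-minimizing sequence in the noncompact setting: although the nonlinearity is localised on $\K$, the weak limit could a priori vanish on $\K$, in which case $u_\lambda\notin\mathcal{N}_\lambda$, and ruling this out requires a careful comparison of $c(\lambda)$ with the bottom of the linear spectrum, exploiting the strict inequality $\lambda<\lambda_1$ and the lower bound $V\geq 1$. A second delicate point is the analysis of $\mu(\cdot)$ near $\lambda_1$ when \eqref{AssG} fails, since the ground state of $D^2_A+V$ then vanishes on $\K$ and cannot serve as a Nehari test function; there one has to build a test configuration from higher eigenfunctions or from an approximate eigenfunction with nontrivial support on $\K$, and securing such a configuration is the most subtle ingredient of the argument.
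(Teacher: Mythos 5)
Your overall strategy --- sweeping over the prospective multiplier $\lambda\in(-\infty,\lambda_1)$ and reading the dichotomy off the masses of the resulting solutions --- is in the same spirit as the paper, which for each $s<\lambda_1$ applies its penalization machinery to the shifted operator $D_A^2+(V(x)-s)$ (Remarks \ref{remarkD-s} and \ref{liststh2} together with Lemma \ref{lemur}). Your existence step via Nehari minimization of $J_\lambda$ is plausible: on $\mathcal{N}_\lambda$ one has $\norm{u}^2-\lambda\norm{u}_2^2=p\,\Psi(u,\G)$, the compactness of $H^1_A(\G,\C)\hookrightarrow L^p(\K,\C)$ (Remark \ref{recompact}) rules out the vanishing scenario you flag, and the norm equivalence for $\lambda<\lambda_1$ gives coercivity.

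The genuine gap is in the dichotomy step. When alternative (\ref{th2case1}) fails, the theorem demands, for \emph{each} $\lambda<\lambda_1$, a solution with multiplier $\lambda$ and mass \emph{strictly less than} $\mu$; your Nehari ground state $u_\lambda$ comes with whatever mass $\mu(\lambda)$ it happens to have, possibly larger than $\mu$, and nothing in the construction caps it. You propose to repair this via continuity of $\mu(\cdot)$ and the intermediate value theorem, but (a) $\lambda\mapsto\mu(\lambda)$ is not known to be continuous: Nehari minimizers need not be unique and the ground-state mass can jump as $\lambda$ varies (the level $c(\lambda)$ is continuous, the mass is not); and (b) the asserted limit $\mu(\lambda)\to+\infty$ as $\lambda\to-\infty$ is false for $p\geq 6$ --- already for $-u''-\lambda u=\abs{u}^{p-2}u$ on a line the soliton mass is independent of $\lambda$ at $p=6$ and tends to $0$ for $p>6$, consistent with Lemma \ref{lemnonex}, whose threshold $\mu^*_{\lambda,p}\to+\infty$ as $\lambda\to-\infty$ only for $2<p<6$ --- so the range of $\mu(\cdot)$ need not contain the prescribed $\mu$. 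Finally, the endpoint $\lambda=\lambda_1$ under \eqref{AssGm}: since by your own estimate $\mu(\lambda)\to 0$ as $\lambda\to\lambda_1^-$, the weak limit of $(u_\lambda)$ is trivial and cannot supply the critical point with multiplier $\lambda_1$ permitted in (\ref{th2case1}). The paper avoids all of this by working inside $U_\mu=\{u:\norm{u}_2^2<\mu\}$ with the penalization $f_r(\norm{u}_2^2/\mu)$, which blows up at $\partial U_\mu$ and so structurally forces the limiting mass to be at most $\mu$; letting $r\to+\infty$ then yields, for each shift $s$, exactly the alternative ``mass $=\mu$ with shifted multiplier in $[0,\lambda_1-s]$, or mass $<\mu$ with shifted multiplier $0$'', from which the theorem follows by pure logic.
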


	\begin{theorem}\label{th3}
		Under the assumptions of Theorem \ref{th2}, there exists $\mu_0>0$ such that for any $0<\mu \leq \mu_0$, the first alternative of Theorem \ref{th2} occurs: $E(\cdot, \G)$ has a critical point $u \in H^1_A(\G,\C)$ constrained on $H_\mu(\G)$ with Lagrange multiplier $\lambda \in [0, \lambda_1]$ (and $\lambda \in [0, \lambda_1)$ if \eqref{AssG} holds). Moreover, if $2<p<6$, then for every $\mu >0$, $E(\cdot, \G)$ has a critical point $u \in H^1_A(\G,\C)$ constrained on $H_\mu(\G)$ with Lagrange multiplier $\lambda \in (-\infty, \lambda_1]$ (and $\lambda \in (-\infty, \lambda_1)$ if \eqref{AssG} holds).
	\end{theorem}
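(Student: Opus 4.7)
The natural strategy is to study the constrained minimization
\begin{equation*}
c_\mu := \inf_{u \in H_\mu(\G)} E(u, \G),
\end{equation*}
to show that it is attained for $\mu$ small, and for every $\mu > 0$ in the range $2<p<6$, and to read off the Lagrange multiplier bounds from the Euler-Lagrange identity. The first step is two-sided bounds on $c_\mu$. The variational characterization \eqref{l1} gives $\norm{u}^2 \geq \lambda_1 \mu$ on $H_\mu(\G)$, hence $c_\mu \geq \frac{\lambda_1 \mu}{2} - \sup_{H_\mu(\G)} \Psi(\cdot, \G)$. Testing with the rescaled ground state $\tilde\varphi_1 := \sqrt{\mu/\norm{\varphi_1}_2^2}\,\varphi_1$ under \eqref{AssG} (or with a suitable $L^p(\K,\C)$-nontrivial quasi-eigenfunction under \eqref{AssGm}) gives
\begin{equation*}
c_\mu \leq \frac{\lambda_1 \mu}{2} - \Psi(\tilde\varphi_1, \G),
\end{equation*}
strictly under \eqref{AssG} because $\varphi_1$ does not vanish identically on $\K$. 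A Gagliardo-Nirenberg-type inequality on the metric graph of the form $\Psi(u,\G) \leq C \norm{u}^{p/2-1}\mu^{p/4+1/2}$, combined with these bounds, produces for $\mu$ sufficiently small an a priori bound $\norm{u}^2 \leq 2\lambda_1 \mu$ on almost-minimizers and hence $\Psi(u, \G) \leq C'\mu^{p/2}$.

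The second step is existence of a minimizer. In the compact case this is immediate from the compactness of the embedding $H^1_A(\G,\C) \hookrightarrow L^p(\G,\C)$. In the noncompact case the embedding $H^1_A(\G,\C) \hookrightarrow L^p(\K,\C)$ is still compact, since $\K$ is compact, so any bounded minimizing sequence $u_n \rightharpoonup u^*$ satisfies $\Psi(u_n,\G) \to \Psi(u^*,\G)$. The delicate point is excluding $L^2$-mass loss, $\nu := \norm{u^*}_2^2 < \mu$: combining the Hilbert identity $\norm{u_n}^2 = \norm{u^*}^2 + \norm{u_n - u^*}^2 + o(1)$ with $\norm{u_n - u^*}^2 \geq \lambda_1 \norm{u_n - u^*}_2^2$ yields
\begin{equation*}
c_\mu \geq c_\nu + \frac{\lambda_1(\mu - \nu)}{2},
\end{equation*}
which for $\mu$ small contradicts the strict upper bound $c_\mu \leq \frac{\lambda_1\mu}{2} - C\mu^{p/2}$ unless $\nu = \mu$; the minimizing sequence then converges strongly and $u^*$ realizes $c_\mu$.

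The Euler-Lagrange identity $\norm{u}^2 = \lambda\mu + p\Psi(u,\G)$, equivalently $2c_\mu = \lambda\mu + (p-2)\Psi(u,\G)$, delivers the upper bound $\lambda \leq \lambda_1$ (strict under \eqref{AssG}) via $\lambda\mu \leq 2c_\mu \leq \lambda_1\mu - 2\Psi(\tilde\varphi_1,\G)$. Combining $\norm{u}^2 \geq \lambda_1 \mu$ with the a priori estimate $\Psi(u,\G) \leq C'\mu^{p/2}$ gives the lower bound
\begin{equation*}
\lambda \mu = \norm{u}^2 - p\Psi(u,\G) \geq \lambda_1\mu - pC'\mu^{p/2} \geq 0 \qquad \text{for } \mu \leq \mu_0 := \left(\frac{\lambda_1}{pC'}\right)^{2/(p-2)}.
\end{equation*}
For the subcritical statement, when $2<p<6$ the exponent $p/2-1$ is less than $2$, so $E(\cdot, \G)$ is coercive on $H_\mu(\G)$ for every $\mu > 0$; mass loss is then ruled out by proving the strict subadditivity $c_\mu < c_\nu + \frac{\lambda_1(\mu - \nu)}{2}$ for all $\nu \in (0, \mu)$, obtained by perturbing a mass-$\nu$ almost-minimizer with a small bump localized on $\K$ and exploiting the strict superlinearity of the nonlinear term.

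The principal technical obstacle is the elimination of $L^2$-mass escape along the half-lines in the noncompact case: for small $\mu$ it is handled by the quantitative sharpness of the two-sided bounds on $c_\mu$, while for $2<p<6$ and arbitrary $\mu$ it rests on the strict subadditivity of $\mu\mapsto c_\mu$, whose proof must exploit the specific superlinear structure of the nonlinearity together with the nonvanishing of a test function on $\K$.
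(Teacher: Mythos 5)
Your strategy is built on the global constrained minimization $c_\mu=\inf_{H_\mu(\G)}E(\cdot,\G)$, and this fails at the very first step in the mass-supercritical regime, which the theorem covers: for $p>6$ one has $c_\mu=-\infty$ for every $\mu>0$, even on a compact graph and even with the nonlinearity localized on $\K$. Indeed, concentrating a fixed amount of mass on a single bounded edge of $\K$ via $u_n(x)=\sqrt{\mu n}\,\phi(nx)$ gives $\norm{u_n}^2\sim n^2$ while $\int|u_n|^p\sim n^{p/2-1}$ with $p/2-1>2$, and the potential terms are lower order; hence $E(u_n,\G)\to-\infty$. So there is no minimizer to speak of for $p>6$, and the first assertion of Theorem \ref{th3} (small mass, \emph{all} $p>2$) cannot be reached by minimization. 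The paper avoids this entirely: it works with the penalized functionals $E_{r,\mu}$ on the open ball $U_\mu$, produces a mountain-pass level $c_r$ with $c_\infty\le\mu\lambda_1/2$ (Lemma \ref{lemcin}), passes to the limit $r\to+\infty$ (Lemmas \ref{lemunr} and \ref{lemur}) to land in the dichotomy of Theorem \ref{th2}, and then excludes the ``mass $\nu<\mu$, multiplier $0$'' alternative by the quantitative nonexistence Lemma \ref{lemnonex} with $c=\lambda_1/2$. The subcritical statement for arbitrary $\mu$ is likewise obtained not by subadditivity but by the second part of Lemma \ref{lemnonex}, using that $\mu^*_{\lambda,p}\to+\infty$ as $\lambda\to-\infty$ to exclude alternative (\ref{th2case2}) of Theorem \ref{th2}.

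Even restricted to $2<p\le 6$, two further points in your sketch do not close. First, under \eqref{AssGm} alone the ground state $\varphi_1$ may vanish identically on $\K$, so your test-function upper bound degenerates to $c_\mu\le\mu\lambda_1/2$ without the strict gap $-C\mu^{p/2}$ that your mass-loss exclusion requires; the paper's route only needs the non-strict bound because the dichotomy is resolved by Lemma \ref{lemnonex}, not by strict inequalities on $c_\mu$. Second, the inequality $c_\mu\ge c_\nu+\lambda_1(\mu-\nu)/2$ combined with your two-sided bounds only forces $\nu$ to be bounded below by a fixed fraction of $\mu$, not $\nu=\mu$; to conclude you would genuinely need the strict subadditivity you defer to the last paragraph, and on noncompact graphs with nonlinearity localized on $\K$ this is exactly the delicate point (the cited literature \cite{ST,T} shows ground states can fail to exist there), so it cannot be waved through. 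The approach would need to be replaced, not repaired, to cover the statement as written.
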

	
	By applying \cite[Theorem 3.5]{W}, we obtain the following multiplicity results.
	\begin{theorem}\label{th4}
		Let $\G$ be any compact metric graph or any noncompact metric graph with a finite number of edges and a non-empty compact core $\K$, and let $p > 2$. Assume that \eqref{AssGm} holds. Then, for any $j =1,2,\ldots,m$ (if $m=+\infty$, then, for any $j \in \mathbb{N}^+$), there exists $\mu_{j} > 0$ depending on $\G$, $p$, $A$, and $V$ such that, for any $0 < \mu < \mu_{j}$, $E( \cdot,\G)$ has at least $j$  distinct critical points $u_1, u_2,\ldots,u_j$ constrained on $H_\mu(\G)$. Moreover, if $2<p\leq 6$, $m=+\infty$, and $\lim\limits_{j\to+\infty}\lambda_j=+\infty$, then, for any $\mu >0$ ($0<\mu \leq \mu_0$ if $p=6$), $E( \cdot,\G)$ has infinitely many distinct critical points constrained on $H_\mu(\G)$ whose energy levels tend to $+\infty$.
	\end{theorem}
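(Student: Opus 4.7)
The plan is to apply the abstract Ljusternik--Schnirelmann scheme recorded in \cite[Theorem 3.5]{W} to $E(\cdot,\G)$ on the Hilbert submanifold $H_\mu(\G)\subset H^1_A(\G,\C)$. Since $E(\cdot,\G)$ is invariant under the $\mathbb{Z}_2$-action $u\mapsto -u$, I work with the Krasnoselskii genus $\gamma$ and, for each $k\in\mathbb{N}^+$, define
$$
\Sigma_k:=\{A\subset H_\mu(\G): A=-A,\ A\text{ closed},\ \gamma(A)\ge k\},\qquad c_k(\mu):=\inf_{A\in\Sigma_k}\sup_{u\in A}E(u,\G).
$$
The aim is to choose $\mu_j>0$ so small that $c_j(\mu)$ is a finite critical value lying strictly below a threshold $c_*(\mu)$ under which the constrained Palais--Smale condition holds. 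Once this is in place, the standard degeneracy argument (if $c_k(\mu)=\dots=c_{k+r}(\mu)$, the critical set at that level has genus $\ge r+1$) produces at least $j$ distinct critical points of $E(\cdot,\G)$ constrained on $H_\mu(\G)$.

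For the upper estimate on $c_k(\mu)$, I use \eqref{AssGm}: let $\varphi_1,\dots,\varphi_k$ be $L^2$-orthonormal eigenfunctions of $D^2_A+V$ associated with $\lambda_1<\dots<\lambda_k$, and consider the real span $S_k=\mathrm{span}_\mathbb{R}\{\varphi_1,\dots,\varphi_k\}$. Since the $\varphi_i$ are $L^2$-orthogonal and diagonalize the quadratic form, for $u=\sum\alpha_i\varphi_i\in S_k$ with $\sum\alpha_i^2=\mu$ one has $\|u\|^2=\sum\alpha_i^2\lambda_i\le\lambda_k\mu$, so $E(u,\G)\le\tfrac{1}{2}\lambda_k\mu$ using $\Psi\ge 0$. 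The symmetric sphere $S_k\cap H_\mu(\G)$ has genus $k$ and belongs to $\Sigma_k$, whence $c_k(\mu)\le\tfrac{1}{2}\lambda_k\mu$. Taking $c_*(\mu):=\tfrac{1}{2}\mu\inf\sigma_{\rm ess}(D^2_A+V)$, the strict inequality $\lambda_j<\inf\sigma_{\rm ess}(D^2_A+V)$ from \eqref{AssGm} gives $c_j(\mu)<c_*(\mu)$. The smallness of $\mu$ is invoked both to ensure finiteness of $c_k(\mu)$ in the mass-supercritical regime $p>6$ (where $E$ is unbounded below on the whole $H_\mu(\G)$ but not on natural neighborhoods of $S_k\cap H_\mu(\G)$) and to produce the mass-critical threshold when $p=6$.

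The main obstacle is establishing the Palais--Smale condition strictly below $c_*(\mu)$. Given $(u_n)\subset H_\mu(\G)$ with $E(u_n,\G)\to c$ and the constrained gradient tending to zero, the Lagrange-multiplier relation combined with $V\ge 1$ shows that $(u_n)$ is bounded in $H^1_A(\G,\C)$ and the multipliers $\lambda_n$ converge, up to a subsequence, to some $\lambda_\infty$. If $\G$ is compact, the embedding $H^1_A(\G,\C)\hookrightarrow L^q(\G,\C)$ is compact for every $q\in[2,+\infty]$, so $u_n\to u$ strongly and PS holds at every real level. In the noncompact case, $\Psi$ involves only $\K$, so $u_n|_\K$ still converges strongly; the obstruction is the possible loss of mass at infinity, which via a Brezis--Lieb type decomposition on $\G\setminus\K$ forces $\lambda_\infty\in\sigma(D^2_A+V)$ and the escaping residual energy to be at least $c_*(\mu)$. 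Hence for $c<c_*(\mu)$ the concentration at infinity is excluded and $u_n\to u$ strongly in $H^1_A(\G,\C)$.

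For the second conclusion, when $2<p\le 6$ (with $\mu\le\mu_0$ if $p=6$), $E(\cdot,\G)$ is bounded below and coercive on $H_\mu(\G)$, so the scheme of the previous paragraph yields PS at every real level without any threshold restriction. Under $m=+\infty$ and $\lambda_j\to+\infty$, the test sets $S_k\cap H_\mu(\G)$ belong to $\Sigma_k$ for every $k\in\mathbb{N}^+$, and \cite[Theorem 3.5]{W} produces an infinite sequence of critical values $c_k(\mu)$. Their divergence $c_k(\mu)\to+\infty$ then follows from the classical dimension argument: a sublevel $\{E\le M\}$ cannot contain symmetric subsets of arbitrarily large genus, since otherwise a deformation would capture it in a finite-dimensional symmetric manifold. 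This yields infinitely many distinct critical points whose energies diverge to $+\infty$.
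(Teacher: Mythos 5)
Your overall strategy --- direct Ljusternik--Schnirelmann theory with the Krasnoselskii genus on the sphere $H_\mu(\G)$ --- is genuinely different from the paper's, which penalizes the constraint (working with the modified functionals $E_{r,\mu}$ on the open set $U_\mu=\{u:\norm{u}_2^2<\mu\}$), runs the linking min--max of \cite[Theorem 3.5]{W} with balls $B_{r,j}\subset Y_j$ linking against spheres in $Z_j$, and then passes to the limit $r\to+\infty$. Unfortunately your route has a gap at its foundation in the mass-supercritical case. For $p>6$ the functional $E(\cdot,\G)$ is unbounded from below on $H_\mu(\G)$ for every $\mu>0$, and the very negative sublevel sets contain compact symmetric subsets of arbitrarily large genus: taking $k$ bump functions with disjoint supports in the compact core, each of mass $\mu$ and concentrated at scale $\varepsilon$, the genus-$k$ sphere $\{\sum_i\alpha_i\phi_{i,\varepsilon}:\sum_i\alpha_i^2=1\}\subset H_\mu(\G)$ satisfies $\sup E\leq C\varepsilon^{-1}-ck^{1-p/2}\varepsilon^{1-p/2}\to-\infty$. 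Hence $c_k(\mu)=\inf_{A\in\Sigma_k}\sup_A E=-\infty$ for every $k$ and every $\mu$, and no smallness of $\mu$ repairs this, because $\Sigma_k$ ranges over \emph{all} symmetric sets of genus at least $k$, not only those near $S_k\cap H_\mu(\G)$. Your parenthetical remark about ``natural neighborhoods of $S_k\cap H_\mu(\G)$'' is exactly the point where a linking structure must be introduced and proved (an upper barrier on a ball of $Y_j$ together with a positive lower barrier on a sphere of $Z_j$); this is what the paper's Lemmas \ref{lemth3.5} and \ref{lemfountain} provide, and it cannot be bypassed by the genus min--max as you have defined it.

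The second substantive gap is the Palais--Smale condition below $c_*(\mu)=\tfrac{\mu}{2}\inf\sigma_{\rm ess}(D_A^2+V(x))$ in the noncompact case. You assert that a Brezis--Lieb decomposition on $\G\setminus\K$ forces the escaping residual energy to be at least $c_*(\mu)$, but no such estimate is proved, and with $V$ merely in $L^1_{\rm loc}$ and $V\geq1$ a profile decomposition along the half-lines is delicate; the claim that $\lambda_\infty$ must lie in $\sigma(D_A^2+V(x))$ is neither justified nor needed. The paper's mechanism is different and self-contained: from the Palais--Smale relation and the compactness of $H^1_A(\G,\C)\hookrightarrow L^p(\K,\C)$ one derives the identity $\norm{u_n-u}^2=\lambda_\infty\norm{u_n-u}_2^2+o_n(1)$, and the finite-dimensional spectral splitting $H^1_A(\G,\C)=Y\oplus Y^\perp$ with $\norm{v}^2\geq(2c_\infty/\mu+\delta)\norm{v}_2^2$ on $Y^\perp$ then forces strong convergence whenever $\lambda_\infty\leq 2c_\infty/\mu<\inf\sigma_{\rm ess}(D_A^2+V(x))$ (Lemma \ref{lemunr}). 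You would need to supply an argument of this kind for your threshold claim to stand. The remaining ingredients of your sketch --- the upper bound $c_k(\mu)\leq\mu\lambda_k/2$ from eigenspace spheres, the treatment of $p=6$, and the divergence of the levels when $\lambda_j\to+\infty$ --- are consistent with the paper's estimates, but they become meaningful only once the two gaps above are closed.
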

	
	We observe that, for instance, if $A$ is continuously differentiable on every edge $e\in \E$ and $V$ is continuous on every edge $e\in \E$, if $u\in H_\mu(\G)$ is a critical point of $E( \cdot,\G)$ constrained on $H_\mu(\G)$, then $u$ is twice continuously differentiable on every edge $e\in \E$ and there exists a Lagrange multiplier $\lambda \in \R$ such that $u$ is a solution of \eqref{eqmagcompact} or \eqref{eqmagloc}, respectively (see Appendix \ref{regularity} for details).
	
	Theorem \ref{th4}  extends the multiplicity results for nonlinear Schr\"odinger equations on metric graphs in \cite{Do,Jeanjean} to nonlinear magnetic Schr\"odinger equations on compact metric graphs for all $p>2$.  
	
	On compact metric graphs, in  \cite{Do},  Dovetta proved the existence of infinitely many normalized solutions to \eqref{eqs} in the mass-subcritical case, for every value of the mass, and in the mass-critical case for masses below a certain threshold (where normalized ground states exist if and only if the mass is less than or equal to this threshold). The author also characterized the relation between this threshold and the topology of the graph. In the mass-supercritical case, in \cite{Jeanjean},
	by an analysis of constant-sign solutions combined with a new type of linking  and gradient flow techniques on the mass constraint,  Jeanjean and Song proved the multiplicity of sign-changing normalized solutions to \eqref{eqs} when the prescribed mass is sufficiently small. The multiplicity arguments in \cite{Do,Jeanjean} rely on the compactness of the embedding $H^1(\G) \hookrightarrow L^2(\G)$.
	
	In this paper, we prove the multiplicity of normalized solutions to \eqref{eqmagcompact} and \eqref{eqmagloc} without using the compact embedding of $H^1_A(\G,\C)$ into $L^2(\G,\C)$. Moreover, due to the presence of a magnetic potential, the problem must be studied in complex-valued function spaces; therefore, the argument in \cite{Jeanjean} based on the signs of solutions is not directly applicable to \eqref{eqmagcompact} and \eqref{eqmagloc}. In addition, due to the presence of potentials, the relation between the range of masses for which the equations admit a normalized solution and the topology of the graph changes, and the study of the signs of normalized solutions becomes meaningless.
	
	Finally, let $\G=(\V,\E)$ be any noncompact metric graph with a finite number of edges (whether its compact core is empty or non-empty).  Under assumption \eqref{AssGm}, we study the existence and multiplicity of normalized solutions for 
	\begin{equation}
		\label{eqmagnoncompact}
		\begin{cases}
			\displaystyle \left( \frac{1}{i} \frac{\mathrm{d}}{\mathrm{d} x} - A_e(x) \right)^2 u_e + V_e(x)u_e - \lambda u_e =  \abs{u_e}^{p-2} u_e, \quad &\forall e \in \mathrm{E}, \vspace{5pt}\\
			\displaystyle\underset{e \succ v}\sum \left(\frac{1}{i}u'_e(v) - A^\pm_e(v)u_e(v)\right)= 0, &  \forall v \in \mathrm{V}.
		\end{cases}
	\end{equation}

	Using the same notation as above for the energy functional with
	$$\Psi(u,\G):=\frac{1}{p}\int_\G  \abs{u}^p\,dx,$$
	we obtain the following results.
	\begin{theorem}\label{thn3}
		Let $\G$ be any noncompact metric graph with a finite number of edges, and let $p > 2$. Assume that \eqref{AssGm} holds. Then there exists $\mu_{0}^*>0$ depending on $\G$, $p$, $A$, and $V$ such that, for any $0 < \mu \leq \mu_{0}^*$, $E(\cdot, \G)$ has a critical point $u \in H^1_A(\G,\C)$ constrained on $H_\mu(\G)$,  with Lagrange multiplier $\lambda \in [0, \lambda_1)$.
	\end{theorem}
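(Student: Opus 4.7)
The plan is to construct a solution as a local minimizer of $E(\cdot,\G)$ on a tube around the scaled ground state $\sqrt{\mu}\,\varphi_1$, taking advantage of the spectral gap $\lambda_1<\inf\sigma_{\rm ess}(D^2_A+V)$ ensured by \eqref{AssGm}. Normalize $\varphi_1$ so that $\norm{\varphi_1}_2=1$ and hence $\norm{\varphi_1}^2=\lambda_1$. For every $\mu>0$ introduce the local minimization level
$$m_\mu \;:=\; \inf\bigl\{E(u,\G)\;:\;u\in H_\mu(\G),\ \norm{u}^2\le 2\lambda_1\mu\bigr\}.$$
The one-dimensional Gagliardo--Nirenberg inequality, combined with the diamagnetic inequality $|D_Au|\ge |(\abs{u})'|$ and with $V\ge 1$, yields $\norm{u}_p^p\le C\,\norm{u}_2^{(p+2)/2}\norm{u}^{(p-2)/2}$, and hence on the admissible set
$$E(u,\G)\;\ge\;\tfrac12\norm{u}^2 - \tfrac{C}{p}(2\lambda_1\mu)^{(p-2)/4}\mu^{(p+2)/4}\;\ge\;\tfrac12\lambda_1\mu - C'\mu^{p/2}.$$
The test function $\sqrt{\mu}\,\varphi_1$, which lies in the interior of the admissible set since $\norm{\sqrt{\mu}\,\varphi_1}^2=\lambda_1\mu<2\lambda_1\mu$, shows $m_\mu\le \tfrac12\lambda_1\mu-(\mu^{p/2}/p)\norm{\varphi_1}_p^p$. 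Since $p>2$, I can fix $\mu_0^*>0$ so that, for every $\mu\in(0,\mu_0^*]$, this test value is strictly below the boundary lower bound $\lambda_1\mu-C'\mu^{p/2}$, so any minimizing sequence eventually lies in the open admissible set.

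The crucial step is compactness of a minimizing sequence $\{u_n\}$, since the embedding $H^1_A(\G,\C)\hookrightarrow L^p(\G,\C)$ is not compact on noncompact $\G$ and the nonlinearity is active on the whole graph. Choose $\lambda^\dagger\in(\lambda_1,\inf\sigma_{\rm ess}(D^2_A+V))$ and let $P$ denote the spectral projector of $D^2_A+V$ onto $[0,\lambda^\dagger)$; \eqref{AssGm} makes $\operatorname{Ran}(P)$ finite dimensional. Writing $u_n=Pu_n+P^\perp u_n$, the bound $\norm{u_n}^2\le 2\lambda_1\mu$ together with the spectral estimate $\norm{P^\perp u_n}^2\ge\lambda^\dagger\norm{P^\perp u_n}_2^2$ yields
$$\norm{P^\perp u_n}_2^2\;\le\;\frac{\lambda_1\mu}{\lambda^\dagger-\lambda_1}.$$
In particular, $Pu_n$ converges strongly in $H^1_A(\G,\C)$ along a subsequence. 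To upgrade weak to strong convergence of $u_n$ in $L^p(\G,\C)$, I employ a Brezis--Lieb-type splitting combined with a concentration-compactness argument: any fragment of mass escaping along the half-lines contributes to the quadratic form with a density at least $\inf\sigma_{\rm ess}(D^2_A+V)>\lambda_1$, so the strict gap $m_\mu<\tfrac12\lambda_1\mu$, the superadditivity of $t\mapsto t^{p/2}$ for $p>2$, and a strict subadditivity property of $\mu\mapsto m_\mu$ together forbid mass loss for $\mu\le\mu_0^*$ (after possibly shrinking $\mu_0^*$). The weak limit $u$ therefore achieves $m_\mu$ and is an interior minimizer.

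The Lagrange multiplier rule then produces $\lambda\in\R$ with $\lambda\mu=\norm{u}^2-\norm{u}_p^p$. If $\lambda\ge\lambda_1$ were to hold, the estimate $\norm{u}_p^p\le\norm{u}^2-\lambda_1\mu$ would give
$$E(u,\G)\;\ge\;\Bigl(\tfrac12-\tfrac1p\Bigr)\norm{u}^2+\tfrac{\lambda_1\mu}{p}\;\ge\;\tfrac12\lambda_1\mu,$$
contradicting $m_\mu<\tfrac12\lambda_1\mu$; hence $\lambda<\lambda_1$. Conversely, the Gagliardo--Nirenberg bound $\norm{u}_p^p\le C'\mu^{p/2}$ yields $\lambda\ge\lambda_1-C'\mu^{p/2-1}$, and one more reduction of $\mu_0^*$ (allowed because $p>2$) secures $\lambda\ge 0$. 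The hardest step is the compactness claim: the global (non-localized) nonlinearity prevents a direct compact embedding argument, and both the spectral gap \eqref{AssGm} and the smallness of $\mu$ are essential to rule out concentration of mass at infinity.
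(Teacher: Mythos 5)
Your overall architecture (local minimization on the set $\{u\in H_\mu(\G):\norm{u}^2\le 2\lambda_1\mu\}$, boundary avoidance for small $\mu$, then the multiplier estimates) is a genuinely different route from the paper, which instead runs a mountain-pass argument for the penalized functionals $E_{r,\mu}(\cdot,\G)$ and passes to the limit $r\to+\infty$ (Lemmas \ref{lemnur}, \ref{lemncin}, \ref{lemnonex}). Your energy estimates, the interiority of the minimizer, and the derivation of $\lambda\in[0,\lambda_1)$ from $m_\mu<\tfrac12\lambda_1\mu$ and from $\lambda\ge\lambda_1-C\mu^{p/2-1}$ are all correct as far as they go. However, the compactness step --- which you yourself identify as the hardest --- is a genuine gap: it is asserted, not proved, and the specific mechanism you invoke does not close the argument as stated. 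First, the finite-dimensional reduction only gives the uniform bound $\norm{P^\perp u_n}_2^2\le\lambda_1\mu/(\lambda^\dagger-\lambda_1)$, which is neither smallness nor convergence. Second, the claim that an escaping fragment of mass $\beta$ costs at least $\tfrac12\inf\sigma_{\rm ess}(D_A^2+V)\,\beta$ ignores the nonlinear energy carried by the fragment (the nonlinearity here acts on the whole graph, including the half-lines); accounting for it via Gagliardo--Nirenberg only yields $\beta\le C\mu^{p/2}$, i.e.\ the escaping mass is small but not excluded. To actually forbid dichotomy you would need the strict subadditivity of $\mu\mapsto m_\mu$ against a suitably defined problem at infinity, which you do not establish and which is delicate here: $V$ is only $L^1_{\rm loc}$ with $V\ge1$, so even identifying $\inf\sigma_{\rm ess}$ with an energy density at infinity requires a Persson-type theorem on metric graphs, and the subadditivity inequality is nontrivial in the mass-supercritical range $p>6$.

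For comparison, the paper avoids concentration--compactness entirely. Its Lemma \ref{lemnur} exploits that along an (almost) critical sequence one has $\norm{u_{n}-u}^2=\lambda_{\infty}\norm{u_{n}-u}_2^2+\operatorname{Re}\int_\G(\abs{u_n}^{p-2}u_n-\abs{u}^{p-2}u)\overline{(u_n-u)}\,dx+o_n(1)$, bounds the nonlinear term by $(p-1)C_{\infty,\G}^{p-2}\mu^{\frac{p-2}{2}}\bigl(\tfrac{p\lambda_k}{p-2}\bigr)^{\frac{p-2}{4}}\norm{u_n-u}_2^2$ using the $L^\infty$ Gagliardo--Nirenberg inequality, and then uses the spectral gap $\norm{(I-P_k)v}^2>(\lambda_k+\delta_k)\norm{(I-P_k)v}_2^2$ to absorb everything for $\mu\le\mu_k^{**}$; this is where the smallness threshold $\mu_0^*$ really comes from. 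Your proof could likely be repaired by the same device: apply Ekeland's variational principle to your minimizing sequence to obtain $\langle E'(u_n,\G),\cdot\rangle-\lambda_n(u_n,\cdot)_2\to0$ with $\lambda_n\to\lambda<\lambda_1<\lambda^\dagger$, and then run exactly this quantitative estimate in place of the Brezis--Lieb/subadditivity argument. As written, though, the compactness claim is not proved.
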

	
	\begin{theorem}\label{thn4}
		Let $\G$ be any noncompact metric graph with a finite number of edges, and let $p > 2$. Assume that \eqref{AssGm} holds. Then, for any $k=1,2,\ldots, m$, there exists $\mu_{k}^* > 0$ depending on $\G$, $p$, $A$, and $V$ such that, for any $0 < \mu < \mu_{k}^*$, $E(\cdot, \G)$ has at least $k$ distinct critical points $u_1, u_2,\ldots,u_k$ constrained on $H_\mu(\G)$. 
	\end{theorem}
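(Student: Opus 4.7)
The plan is to apply the abstract $\mathbb{Z}_2$-symmetric minimax principle \cite[Theorem 3.5]{W} on the constraint $H_\mu(\G)$, mirroring the proof of Theorem \ref{th4}, with the extra work needed to recover compactness once the nonlinearity is spread over the whole noncompact graph. Fix $k\in\{1,\ldots,m\}$ and, by \eqref{AssGm}, pick $L^2$-orthonormal eigenfunctions $\varphi_1,\ldots,\varphi_k\in H^1_A(\G,\C)$ associated with $\lambda_1<\cdots<\lambda_k<\Lambda:=\inf\sigma_{\rm ess}(D^2_A+V(x))$. Let $W_k:=\operatorname{span}_{\C}\{\varphi_1,\ldots,\varphi_k\}$, and, denoting by $\Gamma_j$ the family of compact symmetric subsets of $H_\mu(\G)$ of Krasnoselskii genus at least $j$, define
$$
c_j(\mu):=\inf_{B\in\Gamma_j}\max_{u\in B}E(u,\G),\quad j=1,\ldots,k.
$$

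The natural test set $B_\mu:=\{u\in W_k:\|u\|_2^2=\mu\}$ belongs to $\Gamma_k$, and orthogonality of the $\varphi_j$ both in $L^2$ and under $(\cdot,\cdot)$ gives $\max_{B_\mu}\|u\|^2\leq \lambda_k\mu$, while norm equivalence on the finite-dimensional space $W_k$ yields $\|u\|_p^p\leq C\mu^{p/2}$ on $B_\mu$; hence
$$
c_k(\mu)\leq \tfrac{\lambda_k}{2}\mu+C\mu^{p/2}.
$$

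The heart of the proof—and the step where the argument departs from Theorem \ref{th4}—is the Palais-Smale condition: I would show that $E(\cdot,\G)$ satisfies $(\mathrm{PS})_c$ on $H_\mu(\G)$ for every $c<\tfrac{\Lambda}{2}\mu$. Boundedness of a PS sequence $(u_n)$ in $H^1_A(\G,\C)$ follows from coercivity of the quadratic part combined with a Gagliardo-Nirenberg inequality on $\G$ (using boundedness of the Lagrange multipliers). Extracting $u_n\rightharpoonup u$, the compact embedding $H^1_A(B,\C)\hookrightarrow L^p(B,\C)$ on any bounded subgraph $B\subset\G$ localizes the nonlinear term, so $u$ solves the Euler-Lagrange equation. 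Setting $w_n:=u_n-u$, a Brezis-Lieb splitting combined with vanishing at infinity of the $L^p$-norm of $w_n$ (a Lions-type argument tailored to metric graphs, available because $\G$ has only finitely many half-lines) yields $E(u_n,\G)=E(u,\G)+\tfrac{1}{2}\|w_n\|^2+o(1)$ and $\|w_n\|_2^2=\mu-\|u\|_2^2+o(1)$. The crucial spectral estimate $\|w_n\|^2\geq(\Lambda+o(1))\|w_n\|_2^2$—the key use of \eqref{AssGm}—comes from the fact that the spectral projection of $w_n$ onto the finite-dimensional span of eigenfunctions below $\Lambda$ tends to zero (since $w_n\rightharpoonup0$ in $H^1_A$ tests against each fixed $L^2$ eigenfunction strongly). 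Combining these facts forces $\mu=\|u\|_2^2$, giving strong $L^2$- and then $H^1_A$-convergence.

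Finally, since $\lambda_k<\Lambda$, there exists $\mu_k^*>0$ such that $\tfrac{\lambda_k}{2}\mu+C\mu^{p/2}<\tfrac{\Lambda}{2}\mu$ for all $\mu\in(0,\mu_k^*)$, so the levels $c_1(\mu)\leq\cdots\leq c_k(\mu)$ all lie in the compactness range; \cite[Theorem 3.5]{W} then produces $k$ distinct critical points on $H_\mu(\G)$ (with the standard genus argument yielding infinitely many whenever two consecutive levels coincide). The main obstacle is the compactness step: unlike in Theorem \ref{th4}, where the localization of the nonlinearity to $\K$ and the compact embedding into $L^p(\K,\C)$ deliver $(\mathrm{PS})$ essentially for free, here compactness must be traded against the spectral gap $\Lambda-\lambda_k$, which is precisely what forces $\mu_k^*$ to be chosen small.
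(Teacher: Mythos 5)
Your proposal has two genuine gaps, both at the places you yourself flag as the heart of the argument.

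First, the direct genus minimax on the sphere cannot work for $p>6$. On a noncompact graph with at least one half-line and with the nonlinearity acting on all of $\G$, the functional $E(\cdot,\G)$ is unbounded below on $H_\mu(\G)$: concentrating a fixed-mass profile $u_\varepsilon(x)=\varepsilon^{-1/2}\phi(x/\varepsilon)$ on a half-line gives $\norm{u_\varepsilon}^2\sim\varepsilon^{-2}$ while $\norm{u_\varepsilon}_p^p\sim\varepsilon^{1-p/2}$, and $1-p/2<-2$ when $p>6$, so $E(u_\varepsilon,\G)\to-\infty$; embedding such profiles into symmetric sets of any prescribed genus drives every level $c_j(\mu)$ to $-\infty$. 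This is precisely why the paper does not work on the sphere at all: it uses the penalized functionals $E_{r,\mu}$ on the open set $U_\mu$, obtains critical points of $E_{r,\mu}$ via the linking levels of $J_{r,\mu}$ (note that \cite[Theorem 3.5]{W} is a linking theorem on the ambient space, not a genus theorem over compact symmetric subsets of a sphere), and only then passes to the limit $r\to+\infty$, using Lemma \ref{lemnonex} to rule out the alternative in which the limit has mass $\nu<\mu$ and multiplier $0$. Your scheme skips all of this and the theorem is claimed for all $p>2$.

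Second, your compactness claim --- $(\mathrm{PS})_c$ for every $c<\tfrac{\Lambda}{2}\mu$, with $\mu$ playing no role in the compactness itself --- is stronger than what the paper proves, and the mechanism you offer does not support it. The ``Lions-type vanishing of $\norm{w_n}_p$'' is not available for free: a Palais--Smale sequence can lose a nontrivial profile escaping along a half-line, solving there a limit equation, and having finitely many half-lines does not prevent this; the energy threshold $\tfrac{\Lambda}{2}\mu$ by itself does not exclude such dichotomy. The paper's Lemma \ref{lemnur} takes a different route: it bounds the nonlinear remainder pointwise via the Lipschitz estimate $\bigl|\,\abs{z_2}^{p-2}z_2-\abs{z_1}^{p-2}z_1\bigr|\le(p-1)\max\{\abs{z_1},\abs{z_2}\}^{p-2}\abs{z_2-z_1}$ together with the $L^\infty$ Gagliardo--Nirenberg inequality, obtaining a term of the form $(p-1)C_{\infty,\G}^{p-2}\mu^{(p-2)/2}\bigl(p\lambda_k/(p-2)\bigr)^{(p-2)/4}\norm{u_{n,r}-u_r}_2^2$, which is then absorbed into the spectral gap $\delta_k$ over $Y_k^\perp$ provided $\mu\le\mu_k^{**}$. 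In other words, the smallness of $\mu$ is needed to recover compactness, not merely to place the minimax levels below $\tfrac{\Lambda}{2}\mu$; your proposal misattributes where the restriction on $\mu$ comes from, and the compactness step as written would fail.
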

	
	Theorem \ref{thn4} extends the multiplicity results in \cite{Do,Jeanjean} to nonlinear magnetic Schr\"odinger equations on noncompact metric graphs under spectral assumptions for all $p>2$. For this class of graphs, the main difficulty is the lack of compactness of the embedding $H^1_A(\G,\C)\hookrightarrow L^p(\G,\C)$.
	
	\subsubsection{Further comments and remarks}
	
	\begin{remark}
		For any compact metric graph $\G$, since $\K=\G$, by Remark \ref{recompact} below, the assumption \eqref{AssG} is satisfied.
	\end{remark}
	
	\begin{remark}\label{regrow}
		If $\G$ is a compact metric graph, or if $\G$ is a noncompact metric graph with a finite number of edges and $V$ also satisfies
		$$
		\lim_{\operatorname{dist}(x,x_0)\to +\infty}V(x)=+\infty \text{ for some } x_0\in \G,
		$$
		then, by Remark \ref{recompact} and, for noncompact graphs, arguing as in     \cite[Section 3]{BW}, we obtain that $H^1_A(\G,\C)$ is compactly embedded in $L^2(\G,\mathbb{C})$.
		Thus, 
		by \cite[Theorem 4.22]{DBort}, the values $\lambda_k$ defined in Theorem \ref{thminmiax} are eigenvalues. Hence,
		$$m =+\infty\text{ in }\eqref{AssGm}, \quad
		\lim\limits_{j\to+\infty}\lambda_j = +\infty, \quad
		\inf\sigma_{\rm ess}(D^2_A + V(x)) = +\infty,
		$$
		so that \eqref{AssGm} is satisfied and, therefore, when the compact core $\K$ of $\G$ is non-empty, Theorems \ref{th2}--\ref{th4} hold for $E(\cdot, \G)$. \\
		Moreover, whether $\G$ has a non-empty compact core or not, since the embedding $H^1_A(\G,\C) \hookrightarrow L^p(\G,\mathbb{C})$ is also compact (by Gagliardo-Nirenberg-Sobolev inequality, see Lemma \ref{lemgnsg} below), Theorems \ref{th2}--\ref{th4} under assumption \eqref{AssGm} also hold for
		$$
		E_\G( \cdot,\G): H^1_A(\G,\C) \to \mathbb{R}, \quad
		E_\G(u,\G) := \frac{1}{2}\norm{u}^{2}- \frac{1}{p}\int_\G  \abs{u}^p\,dx.
		$$
	\end{remark}
	
	\begin{remark}\label{resubgraph}
		For any metric graph $\G$ with a finite number of edges and a non-empty compact subgraph $\K^*$, Theorems \ref{th2} (with $\K$ in assumption \eqref{AssG} replaced by $\K^*$), \ref{th3}, and \ref{th4} hold for
		$$
		E_{\K^*}( \cdot,\G): H^1_A(\G,\C) \to \mathbb{R},
		\quad
		E_{\K^*}(u,\G) := \frac{1}{2}\norm{u}^{2}- \frac{1}{p}\int_{\K^*}  \abs{u}^p\,dx.
		$$ 
	\end{remark}
	
	\begin{remark}\label{recedge}
		Theorems \ref{thn3} and \ref{thn4} and Remarks \ref{regrow} and \ref{resubgraph} hold on any noncompact connected metric graph that has at most countably many edges and satisfies $\inf\limits_{e\in \mathrm{E}}\ell_e>0$ and $\operatorname{deg}(v)<+\infty$ for every $v \in \mathrm{V}$, since the Gagliardo-Nirenberg-Sobolev inequality holds (see \cite[Section 2.3]{Do0}).
	\end{remark}
	
	\begin{remark}
		For any $V_q \in L^q(\G,\R)$, with $1\leq q \leq +\infty$, and $\nu>0$ sufficiently large, the potential $V$ can be replaced by $V+V_q+\nu$ and Theorems \ref{th2}--\ref{thn4} and Remarks \ref{regrow}--\ref{recedge} extend to the magnetic Schr\"odinger operator $D_A^2 +\left(V(x) +V_q(x)+\nu\right)$ (see Section \ref{subsecineq}).
	\end{remark}

	\begin{remark}\label{refu}
		In view of \cite{Alves}, when  \eqref{AssGm} holds, ($\sigma_{\rm ess}(D^2_A + V(x))$ in \eqref{AssGm} could be replaced by $\sigma_{\rm ess}(D^2_A + V(x))-K_2$, where $K_2$ is defined in \cite[Section 1]{Alves}) and the nonlinearity $\abs{u}^{p-2}u$ (or $\chi_{\K}\abs{u}^{p-2}u$) could be replaced by $f(u)$ (or $\chi_{\K}f(u)$, respectively), where $f$ satisfies the assumptions in \cite[Section 6.2]{Alves} with $2^*=+\infty$. All the results above (with respect to assumption \eqref{AssGm}) then hold with minor modifications. 
	\end{remark}
	
	\begin{remark}
		For nonlinear magnetic Schr\"odinger equations, or for nonlinear Schr\"odinger equations with general potentials on noncompact graphs, arguments involving Morse index type information as in \cite{Bort} fail due to the presence of potentials (in particular, due to the scaling arguments on half-lines used in \cite[Lemma 3.2]{Bort}).
	\end{remark}

	\subsubsection{Plan of the paper}
	The rest of the paper is organized as follows. In Section \ref{secsa}, we develop the theory of magnetic Sobolev spaces on metric graphs and establish the self-adjointness of the magnetic Schr\"odinger operator. In Section \ref{sec:preliminaries}, we prove a Gagliardo-Nirenberg-Sobolev inequality, introduce some modified functionals, and derive a nonexistence result that plays a crucial role in our approach.  In Section \ref{seccompact}, we prove the compactness of Palais-Smale sequences with energy below $\mu \inf\sigma_{\rm ess}(D^2_A + V(x))/2$. Section \ref{secp23} is devoted to the proofs of Theorems \ref{th2} and \ref{th3}, while Section \ref{secp4} contains the proof of Theorem \ref{th4}. Finally, in Section \ref{secnoncompact}, we prove Theorems \ref{thn3} and \ref{thn4}.
	
	In the following, we denote by $C$ a positive constant that may change from inequality to inequality.
	
	\section{Magnetic Sobolev space and self-adjointness of the magnetic Schr\"odinger operator}\label{secsa}  
	In this section, we consider a locally finite connected metric graph $\G=(\mathrm{V},\mathrm{E})$ (see Definition \ref{deflf})
	assuming that $A: \G \to \mathbb{R}$ and $V: \G \to [1, +\infty)$ satisfy $A \in L_{\rm loc}^2(\G,\R)$ and $V \in L_{\rm loc}^1(\G,\R)$. 
	
	Since here we are interested in general properties of the {\em magnetic Schr\"odinger operator}
	\[
	D_A^2+V(x)=\left(\frac{1}{i} \frac{\mathrm{d}}{\mathrm{d} x} - A(x)\right)^2 + V(x),
	\]
	only in this section we will see
	\[
	L^2(\G,\mathbb{C})
	:=
	\left\{u=(u_e)_{e \in \mathrm{E}}:u_e \in L^2(I_e,\mathbb{C})\text{ for every }e\in \mathrm{E} \text{ and }\sum_{e\in \mathrm{E}}\norm{u_e}^2_{L^2(I_e,\mathbb{C})}=\int_\G\abs{u}^2\,dx<+\infty\right\},
	\]
	$$
	H^1(\G,\C):=\left\{u\in C(\G,\mathbb{C}): u' \in L^2(\G,\C) \text{ and } u\in L^2(\G,\C)\right\},
	$$
	and
	$$H^1_A(\G,\C)
	:=\left\{u\in C(\G,\mathbb{C}) : D_Au \in L^2(\G, \mathbb{C}) \text{ and } \int_\G V(x)\abs{u}^2\,dx< +\infty\right\}
	$$
	as complex Hilbert spaces.\footnote{Observe that here $\G$ is locally finite and so, with respect to the analogous definitions given in the introduction, we need, as additional condition, to require the finiteness of the sums.}
	Of course, in such a case, we have to consider on them different inner products\footnote{With an abuse of notation we will denote the {\em new} inner products in the same way as before.} and so, as usual (see e.g. \cite[Section 11.4]{Ha}), we equip them  with 
	$$
	(u,v)_2 := \int_\G u\bar{v}\,dx, \quad u,v \in L^2(\G,\mathbb{C}),
	$$
	and
	\begin{equation*}
		(u,v) :=\int_\G \left(D_Au\overline{D_Av}+  V(x)u\bar{v}\right)\,dx, \quad u,v \in H^1_A(\G,\C)
	\end{equation*}
	respectively, that actually define the same norms as before.
	
	Considering each edge separately, for every $e\in \mathrm{E}$, let us define
	$$H^1_{A_e}(I_e,\C)
	:=\left\{u_e\in C(I_e,\mathbb{C}) : D_{A_e}u_e \in L^2(I_e, \mathbb{C}) \text{ and } \int_{I_e} V(x)\abs{u_e}^2\,dx< +\infty\right\},
	$$
	where
	$$D_{A_e}:=\frac{1}{i} \frac{\mathrm{d}}{\mathrm{d} x} - A_e(x),$$
	equipped with the inner product
	\begin{equation*}
		(u_e,v_e)_{H^1_{A_e}(I_e,\C)}:=\int_{I_e} \left(D_{A_e}u_e\overline{D_{A_e}v_e}+  V_e(x)u_e\overline{v}_e\right)\,dx, \quad u_e,v_e \in H^1_{A_e}(I_e,\C).
	\end{equation*}
	The norm induced by this inner product is given by
	$$
	\norm{u_e}_{H^1_{A_e}(I_e,\C)}:=\left[\int_{I_e} \abs{D_{A_e}u_e}^2+  V_e(x)\abs{u_e}^2\,dx\right]^\frac{1}{2}.
	$$
	Observe that, if $u\in H^1_A(\G,\C)$, then, for every $e\in{\rm E}$, $u_e\in H^1_{A_e}(I_e,\C)$.
	
	For $H^1_A(\G,\C)$  we have the following fundamental result.
	\begin{lemma}\label{lemhilbert}
		Assume that $A: \G \to \mathbb{R}$ and $V: \G \to [1, +\infty)$ satisfy $A \in L_{\rm loc}^2(\G,\R)$ and $V \in L_{\rm loc}^1(\G,\R)$. Then $H^1_A(\G,\C)$ is a separable Hilbert space.
	\end{lemma}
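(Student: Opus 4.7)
The plan is to verify that $(\cdot,\cdot)$ is a well-defined Hermitian inner product on $H^1_A(\G,\C)$, and then reduce the questions of completeness and separability to the corresponding statements for a non-magnetic, $V$-weighted Sobolev space on each edge via an edge-by-edge gauge transformation. Since every term in $(u,v)$ involves either $V|u||v|$ or the $L^2$ norms of $D_Au, D_Av$, the Cauchy–Schwarz inequality on each edge together with the assumption $V\ge 1$ shows that the series defining the inner product converges absolutely; the usual axioms are then immediate, and $\norm{u}=0$ forces $|u|=0$ pointwise since $V\ge 1$.

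The workhorse is the following observation. For each $e\in\mathrm{E}$, the function $\phi_e(x):=\int_0^x A_e(t)\,dt$ is well-defined and continuous on $I_e$ (because $A_e\in L^2_{\mathrm{loc}}\subset L^1_{\mathrm{loc}}$), so $T_e u_e:=e^{-i\phi_e}u_e$ is a pointwise unitary gauge transformation satisfying $|T_e u_e|=|u_e|$ and $|(T_e u_e)'|=|D_{A_e}u_e|$ a.e. Hence $T_e$ is an isometric bijection from $H^1_{A_e}(I_e,\C)$ onto the weighted Sobolev space
\[
W_e:=\left\{v\in C(I_e,\C) : v'\in L^2(I_e,\C),\ \int_{I_e} V_e(x)|v|^2\,dx<+\infty\right\},
\]
equipped with $\norm{v}_{W_e}^2=\int_{I_e}(|v'|^2+V_e|v|^2)\,dx$. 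Completeness of $W_e$ is standard: a Cauchy sequence has $L^2$-Cauchy derivatives and (since $V_e\ge 1$) is Cauchy in $H^1_{\mathrm{loc}}$, hence in $C_{\mathrm{loc}}$; the pointwise limit is continuous, lies in $H^1_{\mathrm{loc}}$, and a Fatou argument on $V_e|v_n-v|^2$ gives convergence in $W_e$. Pulling back, each $H^1_{A_e}(I_e,\C)$ is a separable Hilbert space (separability of $W_e$ is classical, since it embeds isometrically into the separable space $L^2(I_e,\C)^2$ via $v\mapsto(v',\sqrt{V_e}\,v)$).

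Now let $\{u_n\}\subset H^1_A(\G,\C)$ be Cauchy. Since $\norm{u_n-u_m}^2=\sum_{e}\norm{u_{n,e}-u_{m,e}}_{H^1_{A_e}(I_e,\C)}^2$, each edgewise sequence is Cauchy, and by the previous step converges to some $u^{(e)}\in H^1_{A_e}(I_e,\C)$; define $u$ on $\G$ by $u|_{I_e}:=u^{(e)}$. To verify continuity of $u$ at each vertex $v$, note that convergence in $H^1_{A_e}(I_e,\C)$ implies (again via the gauge transformation and the $H^1_{\mathrm{loc}}\hookrightarrow C_{\mathrm{loc}}$ embedding) uniform convergence on a neighborhood of the endpoints of $I_e$; hence $u_{n,e}(v)\to u^{(e)}(v)$, and since $u_{n,e}(v)$ is independent of $e\succ v$ for each $n$, the limits agree at $v$, giving continuity. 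A standard double-Fatou argument on the inequality $\sum_e \norm{u_{n,e}-u_{m,e}}_{H^1_{A_e}}^2<\varepsilon$ (fix $n$, let $m\to\infty$ on each edge, then sum) shows that $u-u_n\in H^1_A(\G,\C)$ and $\norm{u_n-u}\to 0$, proving completeness.

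For separability, the map $u\mapsto(u_e)_{e\in\mathrm{E}}$ isometrically embeds $H^1_A(\G,\C)$ into the Hilbert direct sum $\bigoplus_{e\in\mathrm{E}}H^1_{A_e}(I_e,\C)$, which is a countable direct sum of separable Hilbert spaces and therefore separable; a subspace of a separable metric space is separable, concluding the proof. The main obstacle in this plan is the vertex compatibility step: one must transfer edgewise $H^1_{A_e}$-convergence to pointwise convergence at vertices so that the Kirchhoff-type continuity built into $H^1_A(\G,\C)$ survives the limit; the gauge transformation together with the Sobolev embedding on an interval is exactly what makes this step work.
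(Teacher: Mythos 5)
Your proof is correct, and its overall architecture coincides with the paper's: treat each edge separately, show each $H^1_{A_e}(I_e,\C)$ is a separable Hilbert space, pass to the countable Hilbert direct sum, and recover $H^1_A(\G,\C)$ as a closed (hence complete, separable) subspace by checking that vertex continuity survives edgewise limits via a Sobolev-type embedding into $C$ near the endpoints. Where you genuinely diverge is in the edgewise step: you gauge away the magnetic potential through $\phi_e(x)=\int_0^x A_e(t)\,dt$ and the unitary multiplier $e^{-i\phi_e}$, reducing everything to a non-magnetic $V$-weighted Sobolev space $W_e$, whereas the paper works directly with $D_{A_e}$, identifying the $L^2$-limit $g$ of $D_{A_e}u_n$ by testing against $C_c^\infty$ functions and invoking the definition of the weak derivative. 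The gauge transformation is the cleaner route in one dimension (where $A$ can always be locally gauged away) and makes the identities $|T_eu_e|=|u_e|$, $|(T_eu_e)'|=|D_{A_e}u_e|$ do all the work; the paper's direct argument avoids the Leibniz-rule bookkeeping and generalizes more readily to settings where no global primitive of $A_e$ is available. One point you should make explicit (the paper does so in its opening line): before applying the product rule to $e^{-i\phi_e}u_e$ you need $u_e\in W^{1,1}_{\rm loc}(I_e,\C)$, which follows from $u_e'=iD_{A_e}u_e+iA_e u_e\in L^1_{\rm loc}$ using $A_e\in L^2_{\rm loc}$ and the continuity of $u_e$; without this the chain of identities behind $|(T_eu_e)'|=|D_{A_e}u_e|$ is not yet justified. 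This is a missing sentence rather than a gap, and the rest of your argument (Fatou for the $V$-weighted term, the double-Fatou summation over edges, separability via the isometry $v\mapsto(v',\sqrt{V_e}\,v)$ and the hereditary separability of metric spaces) is sound.
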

	\begin{proof}
		First observe that, for any $u \in H^1_A(\G,\C)$ and $e \in \E$, since $A_e \in L_{\rm loc}^2(I_e,\R)$  and $u_e \in L^2(I_e,\mathbb{C})$, we know that $u_e'=iD_{A_e}u_e + iA_e(x)u_e \in L^1_{\rm loc}(I_e, \mathbb{C})$, so that $u_e \in W^{1,1}_{\rm loc}(I_e, \mathbb{C})$.
		
		Now, let us show that, for every $e \in \mathrm{E}$, $H^1_{A_e}(I_e,\C)$ is a Hilbert space.
		Let $\{u_n\}$ be a Cauchy sequence in $H^1_{A_e}(I_e,\C)$. Then, $\{u_n\}$ is bounded in $H^1_{A_e}(I_e,\C)$ and $\{u_n\}$ and $\{D_{A_e}(u_n
		)\}$ are Cauchy sequences in $L^2(I_e,\mathbb{C})$. It follows that $\{u_n\}$ converges to some limit $u$ and $\{D_{A_e}(u_n
		)\}$ converges to some limit $g$ in $L^2(I_e,\mathbb{C})$. Hence, by Fatou's Lemma, we know that 
		$$\int_{I_e}V_e(x)\abs{u}^2\,dx
		\leq \liminf_{n\to +\infty} \int_{I_e}V_e(x)\abs{u_n}^2\,dx
		<+\infty.$$
		Moreover, since  for all $\varphi\in C_c^\infty(I_e,\mathbb{C})$, $A_e(x)\varphi \in L^2(I_e,\mathbb{C})$, we have
		
		\[
		\int_{I_e}u_n\overline{\varphi}'\,dx
		= - \int_{I_e}u_n'\overline{\varphi}\,dx
		=-i\int_{I_e}\left(D_{A_e}u_n\;\overline{\varphi} +A_e(x)u_n\overline{\varphi}\right)\,dx,
		\]
		taking the limit as $n\to +\infty$, we get
		$$
		\int_{I_e}u\overline{\varphi}'\,dx 
		= -i\int_{I_e}\left(g\overline{\varphi} + A_e(x)u \overline{\varphi}\right)\,dx.
		$$
		Then, by the definition of weak derivatives, we obtain $u'=ig+iA_e(x)u \in L_{\rm loc}^1(I_e,\mathbb{C})$. Thus, by \cite[Theorem 8.2]{Ha} $u$ is continuous on $I_e$, $g=D_{A_e}u$, and $u \in H^1_{A_e}(I_e,\C)$, which implies that $H^1_{A_e}(I_e,\C)$ is a Hilbert space.
		
		Now, as in \cite[Proposition 8.1]{Ha}, we show that $H^1_{A_e}(I_e,\C)$ is separable. It is clear that the operator $B: H^1_{A_e}(I_e,\C) \to  L^2(I_e,\mathbb{C})\times L^2(I_e,\mathbb{C})$ defined by $B u = (\sqrt{V_e(x)}u, D_{A_e}u)$ is an isometry.
		Since $L^2(I_e,\mathbb{C})\times L^2(I_e,\mathbb{C})$ is separable, then $B(H^1_{A_e}(I_e,\C))$ is separable as well and so we can conclude.

		Finally, define the space
		$$\underset{e \in \mathrm{E}}{\bigoplus}H^1_{A_e}(I_e,\C):=\left\{u=(u_e)_{e \in \mathrm{E}}:u_e \in H^1_{A_e}(I_e,\C)\text{ for every }e\in \mathrm{E} \text{ and }\sum_{e\in \mathrm{E}}\norm{u_e}^2_{H^1_{A_e}(I_e,\C)}<+\infty \right\}$$ 
		equipped with the inner product
		$$
		(u,v)_{\underset{e \in \mathrm{E}}{\bigoplus} H^1_{A_e}(I_e,\C)}:=\sum_{e\in \mathrm{E}} (u_e,v_e)_{H^1_{A_e}(I_e,\C)}.
		$$
		The norm induced by this inner product is given by
		$$
		\norm{u}_{\underset{e \in \mathrm{E}}{\bigoplus} H^1_{A_e}(I_e,\C)}:=\left(\sum_{e\in \mathrm{E}}\norm{u_e}^2_{H^1_{A_e}(I_e,\C)}\right)^\frac{1}{2}=\left[\int_{\G} \abs{D_{A}u}^2+  V(x)\abs{u}^2\,dx\right]^\frac{1}{2}.
		$$
		
		On one hand, assume that $\{u_n\} \subset \underset{e \in \mathrm{E}}{\bigoplus} H^1_{A_e}(I_e,\C)$ is a Cauchy sequence.
		Thus, $\{u_n\}$ is bounded in $\underset{e \in \mathrm{E}}{\bigoplus} H^1_{A_e}(I_e,\C)$. Since, for any $u \in \underset{e \in \mathrm{E}}{\bigoplus} H^1_{A_e}(I_e,\C)$ and any $e \in \mathrm{E}$, $\norm{u_e}_{H^1_{A_e}(I_e,\C)} \leq \norm{u}_{\underset{e \in \mathrm{E}}{\bigoplus} H^1_{A_e}(I_e,\C)}$,  we know that $\{u_{n,e}\}$ is a Cauchy sequence of $H^1_{A_e}(I_e,\C)$. Thus, for every $e \in \mathrm{E}$, $\{u_{n,e}\}$ converges to some limit $u_e$ in $H^1_{A_e}(I_e,\C)$. Define $u:=(u_e)$.
		
		For simplicity,  let $E=\{e_m:m \in \mathbb{N}^+\}$. Then, we have 
		\begin{align*}
			\sum_{e\in \mathrm{E}}\norm{u_{e}}^2_{H^1_{A_e}(I_e,\C)}
			&=\sum_{m\in \mathbb{N}^+}\norm{u_{e_m}}^2_{H^1_{A_{e_m}}(I_{e_m},\C)}
			=\lim_{k\to +\infty}\sum_{m=1}^k\lim_{n \to +\infty}\norm{u_{n,e_m}}^2_{H^1_{A_{e_m}}(I_{e_m},\C)}\\
			&=\lim_{k\to +\infty}\lim_{n \to +\infty}\sum_{m=1}^k\norm{u_{n,e_m}}^2_{H^1_{A_{e_m}}(I_{e_m},\C)}
			=\lim_{k\to +\infty}\liminf_{n \to +\infty}\sum_{m=1}^k\norm{u_{n,e_m}}^2_{H^1_{A_{e_m}}(I_{e_m},\C)}\\
			&\leq \lim_{k\to +\infty}\liminf_{n \to +\infty}\sum_{m \in \mathbb{N}^+}\norm{u_{n,e_m}}^2_{H^1_{A_{e_m}}(I_{e_m},\C)}
			= \liminf_{n \to +\infty}\norm{u_{n}}^2_{\underset{e \in \mathrm{E}}{\bigoplus} H^1_{A_e}(I_e,\C)}<+\infty,
		\end{align*}
		being $\{u_n\}$ bounded in $\underset{e \in \mathrm{E}}{\bigoplus} H^1_{A_e}(I_e,\C)$; hence, $u \in {\underset{e \in \mathrm{E}}{\bigoplus} H^1_{A_e}(I_e,\C)}$.\\ 
		Arguing as above, we obtain
		$$
		\norm{u_n-u}^2_{\underset{e \in \mathrm{E}}{\bigoplus} H^1_{A_e}(I_e,\C)} \leq \liminf_{k \to +\infty}\norm{u_{n}-u_{k}}^2_{\underset{e \in \mathrm{E}}{\bigoplus} H^1_{A_e}(I_e,\C)}.
		$$
		Since $\{u_n\} \subset \underset{e \in \mathrm{E}}{\bigoplus} H^1_{A_e}(I_e,\C)$ is a Cauchy sequence, we conclude that $u_n \to u$ in ${\underset{e \in \mathrm{E}}{\bigoplus} H^1_{A_e}(I_e,\C)}$. Therefore, ${\underset{e \in \mathrm{E}}{\bigoplus} H^1_{A_e}(I_e,\C)}$ is complete.

		On the other hand, for every $e \in \mathrm{E}$, since $H^1_{A_e}(I_e,\C)$ is separable Hilbert space, $H^1_{A_e}(I_e,\C)$ has a countable and dense subset $B_e=\{{b_{n,e}}\}_{n \in \mathbb{N}^+}$. 
		Let $E=\{e_m:m \in \mathbb{N}^+\}$, $$B_k:=\{b=(b_{e_m}):b_{e_m} \in B_{e_m}\cup\{0\} \text{ for all } m\in \mathbb{N}^+ \text{ and } b_{e_m} = 0 \text{ for all } m\geq k\}, \quad k\in\mathbb{N}^+,$$
		and $\displaystyle B := \bigcup_{k=1}^+\infty B_k$. Since, for every $k\in\mathbb{N}^+$, $B_k$ is countable, then $B$ is countable. Hence, to prove ${\underset{e \in \mathrm{E}}{\bigoplus} H^1_{A_e}(I_e,\C)}$ is separable, it suffices to show that $B$ is dense in ${\underset{e \in \mathrm{E}}{\bigoplus} H^1_{A_e}(I_e,\C)}$. 
		Fix $u=(u_{e_1},u_{e_2},...) \in {\underset{e \in \mathrm{E}}{\bigoplus} H^1_{A_e}(I_e,\C)}$ and $\varepsilon >0$.   Then, there exists $l \in \mathbb{N}^+$ such that $\displaystyle \sum_{m=l+1}^{+\infty}\norm{u_{e_m}}^2_{H^1_{A_{e_m}}(I_{e_m},\C)} \leq \frac{\varepsilon}{2}$, and, for any $m=1,\ldots,l$, there exists $\tilde{b}_{e_m} \in B_{e_m}$ such that $\displaystyle\norm{\tilde{b}_{e_m}-u_{e_m}}^2_{H^1_{A_{e_m}}(I_{e_m},\C)} \leq \frac{\varepsilon}{2^{m+1}}$. Let $\tilde{b}:=(\tilde{b}_{e_1},\ldots,\tilde{b}_{e_l},0,0,\ldots)\in B$. Then
		\[
		\norm{\tilde{b}-u}^2_{\underset{e \in \mathrm{E}}{\bigoplus} H^1_{A_e}(I_e,\C)}
		=\sum_{m=1}^{l}\norm{\tilde{b}_{e_m}-u_{e_m}}^2_{H^1_{A_{e_m}}(I_{e_m},\C)}+\sum_{m=l+1}^{+\infty}\norm{u_{e_m}}^2_{H^1_{A_{e_m}}(I_{e_m},\C)} \leq \varepsilon.
		\]
		Finally, to conclude applying \cite[Theorem 1.22]{Adams}, we show that $H^1_A(\G,\C)$ is a closed subspace of $\underset{e \in \mathrm{E}}{\bigoplus} H^1_{A_e}(I_e,\C)$.
		First observe that for any edge $e \in \E$, for any $u_e \in  H^1_{A_e}(I_e,\C)$, and any bounded interval $I \subset I_e$, we know that 
		$$\begin{aligned}
			\norm{u_e}_{W^{1,1}(I,\mathbb{C})} &= \norm{u_e'}_{L^1(I,\C)}+\norm{u_e}_{L^1(I,\C)}\\ &\leq \norm{D_{A_e}u_e}_{L^1(I,\C)}+ \norm{A_e(x)u_e}_{L^1(I,\C)}+\norm{u_e}_{L^1(I,\C)}\\
			&\leq C\norm{D_{A_e}u_e}_{L^2(I,\C)}+ \norm{A_e}_{L^2(I,\C)}\norm{u_e}_{L^2(I,\C)}+C\norm{u_e}_{L^2(I,\C)},
		\end{aligned}
		$$ 
		where $C$ is depending on $I$. Thus, $ H^1_{A_e}(I_e,\C)$ is continuously embedded in $W^{1,1}(I, \C)$ and so, by \cite[Theorems 8.2 and 8.8]{Ha}, we conclude that $ H^1_{A_e}(I_e,\C)$ is continuously embedded in $C(\overline{I},\C)$.
		Thus, if $\{u_n\} \subset {\underset{e \in \mathrm{E}}{\bigoplus} H^1_{A_e}(I_e,\C)}$ and $u_n \to u$ in $\underset{e \in \mathrm{E}}{\bigoplus}H^1_{A_e}(I_e,\C)$, we have
		$$u_e(v)=\lim_{n \to +\infty}u_{n,e}(v)$$
		for any $v\in\V$ and $e\succ v$.
		Now, let $\{u_n\} \subset H^1_A(\G,\C)$ and $u_n \to u$ in $\underset{e \in \mathrm{E}}{\bigoplus}H^1_{A_e}(I_e,\C)$. Then, for any $v\in\V$ and $e,f\succ v$,
		\[
		u_e(v)
		= \lim_{n \to +\infty} u_{n,e}(v)
		=\lim_{n \to +\infty} u_{n,f}(v)
		= u_f(v)
		\]
		and so $u \in H^1_A(\G,\C)$ concluding the proof.
	\end{proof}
	
	\begin{remark}\label{recompact}
		In the proof of Lemma \ref{lemhilbert}, we have shown that for any edge $e \in \E$ and any bounded interval $I \subset I_e$, we have that $H^1_{A_e}(I_e,\C)$ is continuously embedded in $W^{1,1}(I, \C)$. Since $\G$ is locally finite and $W^{1,1}(I, \C)$ is compactly embedded in $L^p(I, \C)$ for all $1\leq p<+\infty$ (see \cite[Theorem 8.8]{Ha}), we conclude that for any bounded set $B \subset \G$, $H^1_A(\G,\C)$ is compactly embedded in $L^p(B, \C)$ for all $1\leq p<+\infty$.
		Moreover,
		up to a subsequence, $u_n \to u$ almost everywhere on $\G$.
	\end{remark}
	
	Now,  define the quadratic form $Q$ in $L^2(\G,\C)\times L^2(\G,\C)$ such that
	$$
	Q(u,v)=(u,v), \quad\forall u,v \in \operatorname{dom}(Q):=H^1_A(\G,\C).
	$$
	Denoting
	$$C_c^\infty(\mathcal{G},\mathbb{C}):=\left\{u=(u_e)_{e \in \mathrm{E}}:u_e \in C_c^\infty(I_e,\mathbb{C})\text{ for every }e\in \mathrm{E} \text{ and }\operatorname{supp}u \text{ is bounded}\, \right\}$$
	and following \cite{Serov}, we have
	\begin{theorem}\label{thext}
		Assume that $A: \G \to \mathbb{R}$ and $V: \G \to [1, +\infty)$ satisfy $A \in L_{\rm loc}^2(\G,\R)$ and $V \in L_{\rm loc}^1(\G,\R)$. Then there exists a unique self-adjoint operator $T$ with domain $\operatorname{dom}(T)\subset H^1_A(\G,\C)$ such that
		$$
		Q(u,v)=(Tu,v)_2,\quad \text{for every } u \in \operatorname{dom}(T),v \in H^1_A(\G,\C).
		$$
	\end{theorem}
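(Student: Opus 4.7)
The plan is to apply the first representation theorem of Kato for closed, densely defined, symmetric, and semibounded sesquilinear forms (see, e.g., Kato, \emph{Perturbation Theory for Linear Operators}, Theorem VI.2.1, or Reed--Simon, Vol.~I, Theorem VIII.15). For this, I need to verify that $Q$ enjoys four properties on the Hilbert space $L^2(\G,\C)$: density of $\operatorname{dom}(Q)=H^1_A(\G,\C)$, symmetry and sesquilinearity, closedness, and a lower bound.

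For density, the key point is $C_c^\infty(\G,\C)\subset H^1_A(\G,\C)$. Fix $\varphi\in C_c^\infty(\G,\C)$ and let $B$ be its bounded support. By local finiteness, $B$ intersects only finitely many edges, on each of which $\varphi_e$ is smooth and compactly supported in the interior; in particular $\varphi$ vanishes near every vertex meeting $B$, so $\varphi\in C(\G,\C)$ automatically. Moreover, $\varphi'$ is bounded with bounded support and $A\in L^2_{\rm loc}(\G,\R)$, so $D_A\varphi=\frac{1}{i}\varphi'-A\varphi\in L^2(\G,\C)$; similarly $\int_\G V|\varphi|^2\,dx\leq\|\varphi\|_\infty^2\int_B V\,dx<+\infty$ since $V\in L^1_{\rm loc}(\G,\R)$. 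Together with the standard edgewise approximation (and local finiteness) giving density of $C_c^\infty(\G,\C)$ in $L^2(\G,\C)$, this yields density of $\operatorname{dom}(Q)$.

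Symmetry and sesquilinearity of $Q$ follow immediately from its definition, since $V$ is real-valued. For the lower bound, $V(x)\geq 1$ gives
\[
Q(u,u)=\int_\G|D_Au|^2+V(x)|u|^2\,dx\geq \|u\|_2^2,
\]
so $Q$ is bounded below by $1$. Closedness is precisely the assertion that $(H^1_A(\G,\C),\|\cdot\|)$ is complete in the form norm, which is Lemma \ref{lemhilbert}: if $\{u_n\}\subset H^1_A(\G,\C)$ is Cauchy in $\|\cdot\|$ and $u_n\to u$ in $L^2(\G,\C)$, then by Lemma \ref{lemhilbert} the sequence converges in $H^1_A(\G,\C)$ to some limit; since $\|\cdot\|_2\leq\|\cdot\|$, that limit must coincide with $u$, so $u\in H^1_A(\G,\C)$ and $\|u_n-u\|\to 0$.

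Having verified these four properties, Kato's theorem produces a unique self-adjoint operator $T$ on $L^2(\G,\C)$ with
\[
\operatorname{dom}(T)=\{u\in H^1_A(\G,\C):\exists\,w\in L^2(\G,\C)\text{ with }Q(u,v)=(w,v)_2\ \forall v\in H^1_A(\G,\C)\}\subset H^1_A(\G,\C),
\]
and $Tu:=w$, yielding the desired identity $Q(u,v)=(Tu,v)_2$ for every $u\in\operatorname{dom}(T)$ and $v\in H^1_A(\G,\C)$. The argument is essentially a collection of routine verifications, with all the analytic content already contained in Lemma \ref{lemhilbert}; I do not foresee a serious obstacle. The only spot where the structure of $\G$ enters nontrivially is the density step, where local finiteness is used both to make $C_c^\infty(\G,\C)\subset H^1_A(\G,\C)$ well-defined and to approximate arbitrary $L^2$ functions by such test functions.
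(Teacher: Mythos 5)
Your proposal is correct and follows essentially the same route as the paper: both verify that $Q$ is densely defined (via $C_c^\infty(\G,\C)\subset H^1_A(\G,\C)$ and edgewise approximation), symmetric, semibounded, and closed — the last two using $V\geq 1$ so that the form norm is equivalent to the $H^1_A(\G,\C)$ norm, whose completeness is Lemma \ref{lemhilbert} — and then invoke the representation theorem for such forms (the paper cites Serov's Theorem 29.2 where you cite Kato/Reed--Simon, but it is the same result).
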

	\begin{proof}
		By \cite[Theorem 29.2]{Serov} it is enough to show that, starting from the Hilbert space $L^2(\G,\mathbb{C})$, the quadratic form $Q$ is a densely defined, closed, semibounded, and symmetric quadratic form (see \cite[Definition 29.1]{Serov}).\\
		It is clear that $C_c^\infty(\mathcal{G},\mathbb{C}) \subset H^1_A(\G,\C)\subset L^2(\G,\mathbb{C})$. Let $u \in L^2(\G,\mathbb{C})$. Then, for any $\varepsilon>0$, there exists a finite subset $\E_\varepsilon$ of $\E$ such that
		\[
		\|u\|_{L^2{(\E_\epsilon^c,\mathbb{C})}}^2:=\sum_{e\in \E_\epsilon^c} \|u_e\|_{L^2{(I_e,\mathbb{C})}}^2 < \frac{\varepsilon^2}{4}.
		\]
		Then,
		$$
		u_{\varepsilon}
		:=(u_{\varepsilon,e}), \text{ where } u_{\varepsilon,e}
		:=\begin{cases}
			u_e\quad  \text{if }e\in \E_\varepsilon,\\
			0\quad \text{otherwise,}
		\end{cases}
		$$
		satisfies $\norm{u-u_\varepsilon}_2<\varepsilon/2$.
		Moreover, since by 
		\cite[Corollary 4.23]{Ha},  $C_c^\infty(I_e,\C)$ is dense in $L^2(I_e,\mathbb{C})$, for every edge $e \in \E$, we know that there is $\tilde{u}_{\varepsilon} \in C_c^\infty(\mathcal{G},\mathbb{C})$ such that $\norm{\tilde{u}_{\varepsilon}-u_\varepsilon}_2<\varepsilon/2$. Hence, $\norm{\tilde{u}_{\varepsilon}-u}_2<\varepsilon$ and so we obtain that $C_c^\infty(\mathcal{G},\mathbb{C})$ is dense in $L^2(\G,\mathbb{C})$, namely that the quadratic form $Q$ is densely defined.\\
		By the definition of $Q$, it is clear that the quadratic form $Q$ is symmetric.\\
		Being $Q(u,u) \geq 0$ for all $u \in \operatorname{dom}(Q)$, we know that the quadratic form $Q$ is semibounded from below.\\
		Finally, since $V \geq 1$, we have that for every $u \in \operatorname{dom}(Q)$, $Q(u,u) \geq \norm{u}^2_2$, then
		\begin{equation*}
			\norm{u}_Q := \sqrt{Q(u,u)+\norm{u}_2^2}, \quad \forall u \in \operatorname{dom}(Q)
		\end{equation*}
		is an equivalent norm on $H^1_A(\G,\C)$.
		Thus, from Lemma \ref{lemhilbert} it follows that  the quadratic form $Q$ is closed.
	\end{proof}

	Let now
	\[
	H^1_{A,c}(\G,\C):=\left\{u\in H^1_A(\G,\C): \operatorname{supp}u \text{ is bounded}\,\right\}.
	\]
	Moreover, for $A$ and $V$ continuous on every edge $e \in \E$, let us consider as domain of the magnetic Schr\"odinger operator $D_A^2 + V(x)$ in $L^2(I_e,\mathbb{C})$ the set
	$$
	\widetilde{H}^2_c(\G,\mathbb{C})
	:=\left\{
	u=(u_e)_{e \in \mathrm{E}}\in C(\G,\mathbb{C}) :
	\begin{array}{c}
		\operatorname{supp}u \text{ is bounded},
		u'' \in L^2(\G,\mathbb{C}),
		u_e'\in C(I_e,\mathbb{C}) \ \forall e \in \mathrm{E},\\
		\displaystyle\underset{e \succ v}\sum \Big(\frac{1}{i}u'_e(v) - A_e^\pm(v)u_e(v)\Big)= 0, \quad  \forall v \in \mathrm{V}
	\end{array}
	\right\}.
	$$
	
	Using the following density result, we can clarify the relationship between $T$ and $D_A^2 + V(x)$. The proof of the next lemma follows arguments from \cite{Hof, Lieb}.
	
	\begin{lemma}\label{lemdense}
		Assume that $A: \G \to \mathbb{R}$ and $V: \G \to [1, +\infty)$ satisfy $A \in L_{\rm loc}^2(\G,\R)$ and $V \in L_{\rm loc}^1(\G,\R)$. Then $H^1_{A,c}(\G,\C)$ is a dense subset of $H^1_A(\G,\C)$. Moreover, for $A$ and $V$ continuous on every edge $e \in \E$, $\widetilde{H}^2_c(\G,\mathbb{C})$ is a dense subset of $H^1_A(\G,\C)$.
	\end{lemma}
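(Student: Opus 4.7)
The plan is to prove the two density statements in sequence, reducing the second to the first through an edge-by-edge construction that makes the Kirchhoff magnetic boundary conditions hold trivially. For $H^1_{A,c}(\G,\C)$, I use a standard truncation by a distance-based Lipschitz cutoff. Fix a reference vertex $v_0\in\V$ and for each $n\in\mathbb{N}^+$ define the continuous cutoff $\eta_n:\G\to[0,1]$ equal to $1$ on $\{x\in\G:\operatorname{dist}(x,v_0)\leq n\}$, equal to $0$ outside $\{x\in\G:\operatorname{dist}(x,v_0)\leq 2n\}$, and affine in between. Local finiteness of $\G$ ensures $\eta_n$ has bounded support, and $\abs{\eta_n'}\leq 1/n$ on every edge. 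Given $u\in H^1_A(\G,\C)$, the identity $D_A(\eta_n u)=\tfrac{1}{i}\eta_n'u+\eta_n D_A u$ together with $V\abs{\eta_n u}^2\leq V\abs{u}^2$ show $\eta_n u\in H^1_{A,c}(\G,\C)$, and
\[
\norm{\eta_n u-u}^2\leq \frac{2}{n^2}\norm{u}_2^2+2\int_\G(1-\eta_n)^2\abs{D_A u}^2\,dx+\int_\G V(1-\eta_n)^2\abs{u}^2\,dx\longrightarrow 0
\]
as $n\to+\infty$ by dominated convergence, since $\eta_n\to 1$ pointwise.

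For $\widetilde{H}^2_c(\G,\C)$, by the first step it suffices to approximate any $u\in H^1_{A,c}(\G,\C)$, and since $\operatorname{supp}u$ is bounded and $\G$ is locally finite, only finitely many edges are involved. The key observation is that if we impose, at each endpoint $\hat v\in\{0,\ell_e\}$ of $I_e$ identifying a vertex $v$, the pointwise relation $v_e'(\hat v)=iA_e(\hat v)\,v_e(\hat v)$, then each summand $\tfrac{1}{i}v_e'(v)-A_e^\pm(v)v_e(v)$ of the Kirchhoff condition vanishes individually (direct check in the cases $\hat v=0$ and $\hat v=\ell_e$, noting that the outward derivative and $A^\pm_e(v)$ both flip sign at $\hat v=\ell_e$). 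Consequently the derivative data decouple completely edge by edge, and no linear system at the vertices has to be solved.

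On each edge $e$ meeting $\operatorname{supp}u$, choose a bounded subinterval $J_e\subset I_e$ whose endpoints are either vertices of $\G$ or interior points of $I_e$ strictly beyond $\operatorname{supp}(u_e)$, and construct a Hermite cubic polynomial $p_e$ on $J_e$ matching at each endpoint the required value (either $u(v)$ or $0$) and the required derivative (either $iA_e(\hat v)u(v)$ or $0$). Then $u_e-p_e\in H^1_0(J_e,\C)$, and a standard interior cutoff plus Friedrichs mollification produces $\varphi_\varepsilon\in C_c^\infty(\operatorname{int}J_e,\C)$ with $\varphi_\varepsilon\to u_e-p_e$ in $H^1(J_e,\C)$. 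Setting $v_e:=p_e+\varphi_\varepsilon$ on $J_e$, extending by $0$ on $I_e\setminus J_e$, and setting $v_e:=0$ on edges not meeting $\operatorname{supp}u$, one obtains $v\in C(\G,\C)$ of bounded support, $C^\infty$ on every edge (so $v_e'\in C(I_e,\C)$ and $v''\in L^2(\G,\C)$), with each Kirchhoff summand identically zero; hence $v\in\widetilde{H}^2_c(\G,\C)$. Continuity (hence boundedness) of $A_e,V_e$ on the finitely many $J_e$ upgrades the edge-wise $H^1$-convergence of $v_e$ to $u_e$ into $H^1_A$-convergence on $\G$. The main obstacle is the simultaneous bookkeeping of continuity at vertices, the Kirchhoff condition, bounded support, and $H^2$-regularity on each edge; the pivotal simplification is precisely the observation above, which kills each Kirchhoff summand separately and reduces the problem to independent Hermite interpolation plus one-dimensional mollification on the finitely many relevant edges.
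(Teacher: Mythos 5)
Your proof is correct, and the two halves deserve separate comments. The first half (density of $H^1_{A,c}(\G,\C)$) is essentially the paper's own argument: a distance-based cutoff, the product identity $D_A(\psi u)=\psi D_Au-i\psi'u$, and dominated convergence; the only difference is a piecewise-affine rather than smooth cutoff, which is immaterial. For the second half you take a genuinely different route. The paper first approximates $u$ edgewise by smooth functions (mollification on bounded edges, reflection plus $C_c^\infty$ approximation on half-lines), which destroys continuity at the vertices and the Kirchhoff condition, and then restores both by adding explicit vertex-localized correction bumps $\psi_{n,0,v}$ and $\psi_{n,1,v}$ whose $H^1$ norms must be estimated and shown to vanish. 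Notably, the paper's corrections are tuned so that the final derivative of the approximant at each vertex equals $iA_e^{\pm}(v)u_e(v)$ --- exactly the pointwise relation you isolate --- so the observation that each Kirchhoff summand can be killed individually is implicitly present there as well; your contribution is to make it the organizing principle. By enforcing the boundary data exactly from the outset via Hermite interpolation and mollifying only the $H^1_0$ remainder, you trade the paper's delicate norm estimates on the correction bumps for the standard density of $C_c^\infty$ in $H^1_0$ of an interval, and the verification of membership in $\widetilde{H}^2_c(\G,\mathbb{C})$ becomes immediate. One cosmetic inaccuracy: at the artificial endpoint of $J_e$ inside a half-line, your $v_e$ (cubic glued to the zero extension) is only $C^1$, not $C^\infty$ on all of $I_e$; this is harmless, since $v_e'$ is continuous and $v_e''$ is piecewise continuous with bounded support, hence $v_e''\in L^2$, which is all that $\widetilde{H}^2_c(\G,\mathbb{C})$ requires (alternatively, use a quintic Hermite polynomial matching a vanishing second derivative at that endpoint).
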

	\begin{proof}
		We first show that $H^1_{A,c}(\G,\C)$ is a dense subset of $H^1_A(\G,\C)$. Fix some $x_0 \in \G$.
		Take $\chi \in C_c^\infty([0,+\infty),\R)$, with $0 \leq  \chi \leq 1$ and $\chi \equiv 1$ in $[0,1]$, and consider $\chi_n(r)=\chi(r/ n)$ for $r \in [0,+\infty)$ and $n\in\mathbb{N}^+$.
		Define the cut-off functions $\psi_n: \G \to \R$ via
		$$
		\psi_n(x):=\chi_n\left(\operatorname{dist}(x_0,x)\right).
		$$
		Then, for every $e \in \E$, $\psi_{n,e}:=\psi_n|_{I_e} \in W^{1,1}(I_e,\mathbb{R})$. For any $u \in H^1_A(\G,\C)$, we have that $\operatorname{supp}(\psi_{n}u)$ is bounded. Moreover, arguing as in the proof of Lemma \ref{lemhilbert}, for every $e \in \E$, $u_e\in W^{1,1}(I_e\cap \operatorname{supp}(\psi_n),\mathbb{C})$ and so, by 
		\cite[Corollary 8.10]{Ha}, we have that
		$$ 
		D_{A}(\psi_{n}u) =\psi_{n} D_{A}u -i\psi_{n}'u
		$$
		so that 
		\[
		\|D_{A}(\psi_{n}u)\|_2
		\leq \|D_{A}u\|_2 + C(n) \|u\|_2<+\infty
		\]
		and
		$$
		\norm{D_A (u-\psi_nu )}_2 
		\leq \norm{(1-\psi_n)D_A u}_2 
		+ \frac{1}{n}\sup_{r \in [0,+\infty)}\abs{\chi'(r)}\norm{u}_2.
		$$
		Thus, $\psi_nu\in H^1_{A,c}(\G,\C)$ and by the Lebesgue dominated convergence theorem it follows that $\psi_{n}u \to u$ in $H^1_A(\G,\C)$.
		
		Next, we show that $\widetilde{H}^2_c(\G,\mathbb{C})$ is a dense subset of $H^1_A(\G,\C)$  for $A,V$  continuous on every edge $e \in \mathrm{E}$. Fix some $x_0 \in \V$.
		For $R > 0$, set $B_R:=\{x\in \G:\operatorname{dist}(x,x_0)<R\}$. Since for every $z_1,z_2\in\mathbb{C}$,
		\[
		\frac{1}{2}|z_1|^2
		\leq|z_1-z_2|^2+ |z_2|^2
		\leq 2|z_1|^2+3|z_2|^2,
		\]
		then, for any $R>0$  and $u \in H^1_A(\G,\C)$ or $u \in H^1(\G,\C)$, we have
		\begin{equation}\label{eqbr}
			\frac{1}{2} \int_{B_R}\abs{u'}^2\,dx
			\leq \int_{B_R} \left(\left|\left(\frac{1}{i} \frac{\mathrm{d}}{\mathrm{d} x} - A(x)\right)u\right|^2+ \abs{A(x)}^2\abs{u}^2\right)\,dx
			\leq \int_{B_R}\left(2\abs{u'}^2+ 3\abs{A(x)}^2\abs{u}^2\right)\,dx.
		\end{equation}
		Hence, since $\G$ is locally finite and $A,V$ are continuous on every edge $e \in \mathrm{E}$, we deduce that, for any $u: \G \to \mathbb{C}$ such that $\operatorname{supp} u \subset B_R$, $u \in H^1_A(\G,\C)$ if and only if $u \in H^1(\G,\C)$. Thus, $\widetilde{H}^2_c(\G,\mathbb{C}) \subset H^1_A(\G,\C)$.

		Fix $u \in H_{A,c}^1(\G,\mathbb{C})$ and fix some $R_0>0$ such that $\operatorname{supp} u \subset B_{R_0}$.
		Then, for any $R_1 > R_0$, by \eqref{eqbr}, we know that, if there exists a sequence $\{u_n\}\subset H^1(\G,\C)$ such that $u_n \to u$ in $H^1(\G,\C)$ and $\operatorname{supp} u_n \subset B_{R_1}$ for all $n \geq 1$, then $u_n \to u$ in $H^1_{A}(\G,\C)$. 
		
		Let $R_1>R_0$ be such that, for any bounded edge $e$ that intersects $B_{R_0}$, we have $e \subset B_{R_1}$ (the existence of a such $R_1$ is ensured by the local finiteness of $\G$) and let us
		construct a sequence $\{u_n\}\subset \widetilde{H}^2_c(\G,\mathbb{C})$ such that $u_n \to u$ in $H^1(\G,\C)$ and $\operatorname{supp} u_n \subset B_{R_1}$ for all $n \geq 1$.\\
		For any bounded edge $e \in \E$ such that $e\cap B_{R_0} \neq \emptyset$, by \cite[Section 5.3.3]{evans}, there exists a sequence $\{\varphi_{n,e}\}_{n \geq 1} \subset C^\infty(I_e,\C)$ such that $\varphi_{n,e} \to u_e$ in $H^1(I_e,\C)$, as $n \to +\infty$.\\
		For any unbounded edge $e \in \E$ such that $e\cap B_{R_0} \neq \emptyset$, 
		tacitly identifying the edge $e$ with $[0, +\infty)$, there is $R^*_e>0$ such that  $\operatorname{supp}{u_e} \subset [0,R^*_e)$. Let $z$ be the endpoint of the edge $e$. Since $x_0\in \V$, for any $x \in [0,+\infty)$, we know $\operatorname{dist} (x,x_0)=\operatorname{dist} (z,x_0) + x$.
		Hence, if $x \in B_{R_0}$, then $[0,x)\subset B_{R_0}$. Thus, for $x_e^*:=\max\{x: x \in \operatorname{supp}u_e\}$, we have $[0,x_e^*] \subset B_{R_0}$. Then we can choose $R_e^*>0$ such that $\operatorname{supp}u_e\subset[0,x_e^*]\subset[0,R_e^*) \subset B_{R_0}$.
		As in the proof of \cite[Theorem 8.6]{Ha}, we can conclude that, for any unbounded edge $e \in \E$, the extension by reflection $$u_e^*:\R\to \C \text { defined by }
		u_e^*(x):=
		\begin{cases}
			u_e(x) &\quad\text{if } x \geq  0, \\
			u_e(-x) &\quad\text{if } x<0,
		\end{cases}
		$$
		satisfies
		$$
		u_e^* \in H^1(\R,\C) \text{ and }(u_e^*)'=u_e'\text{ in }(0,+\infty).
		$$
		Then, by \cite[Remark 14 in Section 8]{Ha}, we conclude that $u_e^*{|_{(-R^*_e,R^*_e)}} \in H^1_0((-R^*_e,R^*_e), \C)$ and there exists a sequence $\{\varphi_{n,e}^*\}_{n \geq 1} \subset C_c^\infty((-R^*_e,R^*_e),\C)$ such that  $\varphi_{n,e}^* \to u_e^*$ in $H^1((-R^*_e,R^*_e),\C)$.
		If
		$$\varphi_{n,e}: [0, +\infty) \to \C,
		\quad
		\varphi_{n,e}(x)
		:=
		\begin{cases}
			\varphi_{n,e}^*(x)\quad&\text{if } x \in [0,R^*_e), \\
			0 &\text{if }x\in [R^*_e,+\infty),
		\end{cases}
		$$
		then  $\{\varphi_{n,e}\}_{n \geq 1} \subset C^\infty(I_e,\C)$ satisfies $\operatorname{supp} \varphi_{n,e}\subset B_{R_0}$ for all $n \geq 1$ and $\varphi_{n,e} \to u_e$ in $H^1(I_e,\C)$, as $n \to +\infty$.\\
		Let now
		$$
		\varphi_n: \G \to \mathbb{C},
		\quad
		\varphi_{n,e}:=
		\begin{cases}
			\varphi_{n,e} &\quad\text{if } e\cap B_{R_0}\neq \emptyset, \\
			0 &\quad\text{if } e\cap B_{R_0}= \emptyset
		\end{cases}
		\text{for every }e\in\E.
		$$
		Then, $\operatorname{supp} \varphi_{n}\subset B_{R_1}$ for all $n \geq 1$ and $\varphi_{n,e} \to u_e$ in $H^1(I_e,\C)$ for every $e \in \E$.
		
		However, $\varphi_n$'s may not belong to $H^1(\G,\C)$ since they may not be continuous on some vertices $v \in \V$. Nevertheless, we use $\{\varphi_n\}$ to construct $\{u_n\}\subset \tilde{H}_c^2(\G,\C)$ such that $u_n \to u$ in $H^1(\G,\C)$ as follows.
		By \cite[Theorems 8.2 and 8.8]{Ha}, we know that $H^1(I_e, \C) \hookrightarrow C(I_e,\mathbb{C})$ for every $e\in\E$. Then, since $\G$ is locally finite (so that $\{e \in \E: e\cap B_{R_0}\}$ is a finite set), we can find
		a subsequence $\{\varphi_{\nu_n}\}$ such that for all $e \in \{e \in \E: e\cap B_{R_0}\neq\emptyset\}$,
		\begin{equation}\label{equ-phin}
			\norm{u_e-\varphi_{\nu_n,e}}_{C(I_e,\mathbb{C})}
			=\norm{u_e-\varphi_{\nu_n,e}}_{L^\infty(I_e,\C)}
			<\frac{1}{2^n} \quad\text{for all } n \geq 1,
		\end{equation}
		For any $v \in \V$ and $e \succ v$, using the notation introduced for $A_e^\pm(v)$, define
		\begin{align*}
			c_{n,0,v,e}&:=u_e(v)-\varphi_{\nu_n,e}(v),\\
			c_{n,1,v,e}&:=iA_e^\pm(v)u_{e}(v)-\varphi_{\nu_n,e}'(v) +2n c_{n,0,v,e},\\
			\widetilde{c}_{n,1,v,e}&:= \max\left\{n,\abs{c_{n,1,v,e}}^3\right\}.
		\end{align*}
		Moreover, for all $v \in \V$, let $x_v:=\operatorname{dist}(x,v)$ and consider
		$$\psi_{n,0,v}: \G \to \C,
		\quad
		{\psi_{n,0,v,e}}(x)
		:=\begin{cases}
			c_{n,0,v,e}\left(1-nx_v\right)^2&\quad\text{if }  \ e \succ v, \ x_v\leq 1/n, \\
			0 &\quad\text{otherwise},
		\end{cases}
		$$
		and
		$$
		\psi_{n,1,v}: \G \to \C,
		\quad
		{\psi_{n,1,v,e}}(x)
		:=\begin{cases}
			c_{n,1,v,e}x_v(1-\widetilde{c}_{n,1,v,e}x_v)^2&\quad\text{if } \ e \succ v, \ x_v\leq 1/\widetilde{c}_{n,1,v,e}, \\
			0 &\quad\text{otherwise}.
		\end{cases}
		$$
		Observe that, for $n$ large enough, $\operatorname{supp}\psi_{n,0,v}, \operatorname{supp}\psi_{n,1,v}\subset B_{R_1}$.
		Indeed, for any $v \not\in B_{R_1}$, since $\operatorname{supp}\varphi_{\nu_n}, \operatorname{supp}u \subset B_{R_1}$, we have that, for all $e\in\E$ with $e \succ v$, $c_{n,0,v,e}=c_{n,1,v,e}=0$ and so $\psi_{n,0,v}=\psi_{n,1,v}=0$. On the other hand, for any $v \in B_{R_1}$ and any $e \succ v$, since
		\[
		x_v\leq \frac{1}{\tilde{c}_{n,1,v,e}}=\frac{1}{\max\{n,|c_{n,1,v,e}|^3\}}\leq\frac{1}{n},
		\]
		it is enough to take $n$ large enough such that $\{x\in e : x_v\leq 1/n \} \subset B_{R_1}$.
		\\
		Moreover, by \eqref{equ-phin}, we have
		$$\norm{\psi_{n,0,v,e}}_2^2 = \frac{\abs{c_{n,0,v,e}}^2}{5n} < \frac{C}{2^{2n}n},
		\qquad
		\norm{\psi_{n,0,v,e}'}_2^2=\frac{4n\abs{c_{n,0,v,e}}^2}{3}<C\frac{n}{2^{2n}},$$
		and
		\[
		\norm{\psi_{n,1,v,e}}_2^2
		\leq
		\frac{\abs{c_{n,1,v,e}}^2}{5\widetilde{c}_{n,1,v,e}^3}\leq \frac{C}{n^\frac{7}{3}},
		\qquad
		\norm{\psi_{n,1,v,e}'}_2^2
		\leq \frac{46\abs{c_{n,1,v,e}}^2}{15\widetilde{c}_{n,1,v,e}}
		\leq \frac{C}{n^\frac{1}{3}}.
		\]
		Then, let
		$$u_n := \varphi_{\nu_n} + \sum_{v \in \V,\, v \in B_{R_1}}(\psi_{n,0,v} + \psi_{n,1,v}).$$
		For $n \geq 1$ large enough, namely such that
		\begin{equation*}
			\frac{1}{n} < \min_{ v\in B_{R_1}, e\succ v} \ell_e
		\end{equation*}
		and $\{x\in e : x_v\leq 1/n \} \subset B_{R_1}$ for any $v \in B_{R_1}$ and $e \succ v$, we have $u_n \in \widetilde{H}^2_c(\G,\mathbb{C})$. Indeed:
		\begin{itemize}
			\item $\operatorname{supp}u_u \subset B_{R_1}$ since, as observed above, the functions involved in the definition of $u_n$ have support contained in $B_{R_1}$;
			\item $u_n\in C(\G,\mathbb{C})$ since, for any $v\in\V$ and $e,f\succ v$, if $v\notin B_{R_1}$ then $u_{n,e}(v)=u_{n,f}(v)=0$, if $v\in B_{R_1}$ then
			\begin{align*}
				u_{n,e}(v)
				&=
				\varphi_{\nu_n,e}(v) + \sum_{w \in \V,\, w \in B_{R_1}}(\psi_{n,0,w,e}(v) + \psi_{n,1,w,e}(v))\\
				&=
				\varphi_{\nu_n,e}(v) + \psi_{n,0,v,e}(v) + \psi_{n,1,v,e}(v)
				=
				\varphi_{\nu_n,e}(v)+c_{n,0,v,e}
				=
				u_e(v)
			\end{align*}
			and analogously $u_{n,f}(v)=u_f(v)$, so that, being $u_e(v)=u_f(v)$, we get $u_{n,e}(v)=u_{n,f}(v)$;
			\item obviously $u'' \in L^2(\G,\mathbb{C})$ and $u_e'\in C(I_e,\mathbb{C})$ for every $e \in \mathrm{E}$;
			\item if $v\in\V\cap B_{R_1}^c$,
			\[
			\sum_{e \succ v} \Big(\frac{1}{i}u'_{n,e}(v) - A_e^\pm(v)u_{n,e}(v)\Big)=0
			\]
			and if $v\in\V\cap B_{R_1}$,
			\begin{align*}
				\displaystyle\sum_{e \succ v} \Big(\frac{1}{i}u'_{n,e}(v) - A_e^\pm(v)u_{n,e}(v)\Big)
				&=\frac{1}{i} \sum_{e \succ v} \Big(\varphi'_{\nu_n,e}(v) + \psi_{n,0,v,e}'(v)+\psi_{n,1,v,e}'(v)\Big)\\
				&\quad
				- \sum_{e \succ v} A_e^\pm(v) \Big(\varphi_{\nu_n,e}(v) + \psi_{n,0,v,e}(v)+\psi_{n,1,v,e}(v)\Big)\\
				&=\frac{1}{i} \sum_{e \succ v} \Big(\varphi'_{\nu_n,e}(v)+ c_{n,1,v,e}-2nc_{n,0,v,e}\Big)\\
				&\quad - \sum_{e \succ v} A_e^\pm(v)\Big(\varphi_{\nu_n,e}(v) +c_{n,0,v,e}\Big)\\
				&=
				\sum_{e \succ v}\Big(A_e^\pm(v)u_{e}(v) - A_e^\pm(v)u_{e}(v)\Big)
				= 0.
			\end{align*}
		\end{itemize}
		Therefore, as $n \to + \infty$,
		$$
		\norm{u-u_n}_{H^1(\G,\C)}\leq \sum_{e \in \E,\, e\cap B_{R_0}\neq \emptyset}\left(\norm{u_e - \varphi_{\nu_n,e}}_{H^1(I_e,\C)} + \norm{\psi_{n,0,v,e} + \psi_{n,1,v,e}}_{H^1(I_e,\C)}\right) \to 0
		$$
		and so, by previous arguments, $u_n \to u$ in $H^1_{A}(\G,\C)$.
		
		Finally, let $u \in H^1_A(\G,\C)$ and $\varepsilon>0$. Since $H^1_{A,c}(\G,\C)$ is a dense subset of $H^1_A(\G,\C)$, there exists $u_1 \in H_{A,c}^1(\G,\mathbb{C})$ such that $\norm{u-u_1} < \varepsilon/2$. Moreover, by the previous arguments, there exists $u_2 \in  \tilde{H}_c^2(\G,\C)$ such that $\norm{u_1-u_2}< \varepsilon/2$. Hence, $\norm{u-u_2}< \varepsilon$, concluding the proof.
	\end{proof}
	Now, let $T$ be the unique self-adjoint operator associated with $Q$ introduced in Theorem \ref{thext} and let $\operatorname{dom}\left(D_A^2 + V(x)\right):=\widetilde{H}^2_c(\G,\mathbb{C})$. Define the associated quadratic form of $D_A^2 + V(x)$ by
	$$
	Q_0(u,w):=((D_A^2 + V(x))u,w)_2, \quad u,w\in \operatorname{dom}(Q_0):= \operatorname{dom}\left(D_A^2 + V(x)\right).
	$$
	Recalling that the Friedrichs extension is a self-adjoint extension of a non-negative densely defined symmetric operator, we have
	\begin{lemma}\label{lemfri}
		Let $\G$ be any locally finite connected metric graph. Let  $A: \G \to \mathbb{R}$ be continuously differentiable on every edge $e \in \mathrm{E}$ and $V: \G \to [1, +\infty)$ be continuous  on every edge $e \in \mathrm{E}$. Then the operator $T$ is the Friedrichs extension of $D_A^2 + V(x)$.  
	\end{lemma}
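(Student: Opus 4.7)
The plan is to view $S := D_A^2+V(x)$, equipped with the domain $\widetilde{H}^2_c(\G,\C)$, as a non-negative densely defined symmetric operator on $L^2(\G,\C)$ and to show that its associated quadratic form $Q_0$ is closable with closure exactly equal to $Q$. Since by Theorem \ref{thext} the operator $T$ is the unique self-adjoint operator associated with the closed form $Q$, and since the Friedrichs extension of $S$ is by definition the self-adjoint operator associated with the closure of $Q_0$, the uniqueness built into the Friedrichs construction will then force $T$ to coincide with this extension.

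The first, and most delicate, step is the edge-by-edge integration-by-parts identity
\begin{equation*}
((D_A^2+V(x))u,w)_2 = \int_\G\bigl(D_Au\,\overline{D_Aw} + V(x)u\bar w\bigr)\,dx = Q(u,w),
\end{equation*}
valid for $u\in\widetilde{H}^2_c(\G,\C)$ and $w\in H^1_A(\G,\C)$. Using that $A$ is $C^1$ on each edge $e$, I will expand $D_{A_e}^2 u_e = -u_e'' + 2iA_e u_e' + iA_e' u_e + A_e^2 u_e$ and integrate the $-u_e''\bar w_e$ contribution by parts against $\bar w_e$. The interior contributions reassemble as $D_{A_e}u_e\,\overline{D_{A_e}w_e}$, and the two endpoints of $I_e$ each generate a boundary term proportional to $\bigl(\tfrac{1}{i}u_e'(v)-A_e^\pm(v)u_e(v)\bigr)\bar w_e(v)$, once the orientation convention defining $u'_e(v)$ and $A_e^\pm(v)$ is taken into account. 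The continuity of $w\in H^1_A(\G,\C)$ at every vertex $v$ allows $\bar w(v)$ to be pulled out of the sum over $e\succ v$, and the Kirchhoff magnetic boundary condition imposed in the very definition of $\widetilde{H}^2_c(\G,\C)$ then annihilates each vertex contribution.

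With this identity in hand, symmetry and non-negativity of $S$ on $\widetilde{H}^2_c(\G,\C)$ are immediate, and density of $\widetilde{H}^2_c(\G,\C)$ in $L^2(\G,\C)$ follows by chaining Lemma \ref{lemdense} with the density of $H^1_A(\G,\C)$ in $L^2(\G,\C)$ already proved in Theorem \ref{thext}. Because $V\ge 1$ we have $Q(u,u)\ge\norm{u}_2^2$, so the natural form-graph norm $\sqrt{Q_0(u,u)+\norm{u}_2^2}$ is equivalent to $\norm{u}$; combining this with $Q_0=Q$ on $\widetilde{H}^2_c(\G,\C)$ and the density statement of Lemma \ref{lemdense} shows that the closure of $Q_0$ is precisely $Q$, defined on the form domain $H^1_A(\G,\C)$. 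Consequently the self-adjoint operator associated with the closure of $Q_0$ is, by Theorem \ref{thext}, the operator $T$, and by construction this is the Friedrichs extension of $S$.

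The hardest part will be to carry out the integration-by-parts identity cleanly under the present low-regularity assumptions: one has to check that $C^1$ regularity of $A$ merely on each edge (not globally on $\G$) together with continuity of $V$ on each edge suffices to perform the expansion of $D_A^2u$ and the subsequent integration by parts, and one must keep precise track of the orientation conventions so that the vertex boundary terms really collapse via the Kirchhoff magnetic condition when tested against an arbitrary continuous $w\in H^1_A(\G,\C)$. Once this is settled, the rest reduces to a routine application of the Friedrichs construction.
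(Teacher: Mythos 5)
Your proposal is correct and follows essentially the same route as the paper: integration by parts on each edge to show that $Q_0$ agrees with $Q$ on $\widetilde{H}^2_c(\G,\C)$ (with the Kirchhoff magnetic conditions killing the vertex terms), density of $\widetilde{H}^2_c(\G,\C)$ in $H^1_A(\G,\C)$ via Lemma \ref{lemdense}, and identification of the Friedrichs extension with $T$ through the uniqueness of the self-adjoint operator associated with the closed form $Q$. The only cosmetic difference is that you test the integration-by-parts identity against general $w\in H^1_A(\G,\C)$ rather than against $w\in\widetilde{H}^2_c(\G,\C)$ as the paper does, which changes nothing of substance.
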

	\begin{proof}
		Since $C_c^\infty(\mathcal{G},\mathbb{C}) \subset \widetilde{H}^2_c(\G,\mathbb{C})\subset L^2(\G,\mathbb{C})$ and $C_c^\infty(\mathcal{G},\mathbb{C})$ is dense in $L^2(\G,\mathbb{C})$, it is clear that $D_A^2 + V(x)$ is densely defined in $L^2(\G,\C)$.
		We first show that $D_A^2 + V(x)$ is symmetric and nonnegative.
		Let $u,w \in \widetilde{H}^2_c(\G,\mathbb{C})$ and, for some fixed 
		$x_0 \in \G$, let $R>0$ be large enough such that $B_R:=\{x\in \G:\operatorname{dist}(x,x_0)<R\}$ satisfies $(\operatorname{supp}u)\cup(\operatorname{supp}w) \subset B_R$.
		Since $\G$ is locally finite, integrating by parts we have
		\begin{equation*}
			\begin{split}
				\left(D_A^2u,w\right)_2-\left(D_Au,D_Aw\right)_2
				&=\sum_{e\in\E} \int_{I_e} 
				\left(-u_e'' 
				+ i A'_eu_e
				+ 2i A_e u'_e
				+A_e^2 u_e\right)\overline{w_e}\,dx\\
				&\quad-\sum_{e\in\E} \int_{I_e} \left(
				u_e' \overline{w_e'}
				-i A_eu_e\overline{w_e'}
				+i A_e u'_e \overline{w_e}
				+A_e^2 u_e\overline{w_e}\right)\,dx\\
				&
				=i\sum_{e\in\E_b}\left[
				\left(\frac{1}{i}u_e'(0)-A_e(0)u_e(0)\right) \overline{w_e(0)}
				-   \left(\frac{1}{i}u_e'(\ell_e)-A_e(\ell_e)u_e(\ell_e)\right)\overline{w_e(\ell_e)}\right]\\
				&\qquad
				+i\sum_{e\in\E\setminus\E_b}\left(\frac{1}{i}u_e'(0)-A_e(0)u_e(0)\right) \overline{w_e(0)}\\
				&=i\sum_{e\in\E}\sum_{e \succ v}\left(\frac{1}{i}u'_e(v) -A_e^\pm(v)u(v)\right) \overline{w(v)}\\
				&=
				i\sum_{v\in\V}\left[\overline{w(v)}\sum_{e \succ v}\left(\frac{1}{i}u'_e(v) -A_e^\pm(v)u(v)\right)\right]
				=0
			\end{split}
		\end{equation*}
		where $\E_b:=\{e\in\E: e \text{ bounded}\}$, and, analogously,
		\[
		\left(u,D_A^2w\right)_2-\left(D_Au,D_Aw\right)_2
		=0.
		\]
		Thus, $D_A^2 + V(x)$ is symmetric and nonnegative and, for every $u,w\in\operatorname{dom}\left(D_A^2 + V(x)\right):=\widetilde{H}^2_c(\G,\mathbb{C})=:\operatorname{dom}(Q_0)$,
		\begin{equation}\label{Q0Q}
			Q_0(u,w)=\left((D_A^2 + V(x))u,w\right)_2=\left(u,(D_A^2 + V(x))w\right)_2=\left(D_Au,D_Aw\right)_2 + (V(x)u,w)_2=Q(u,w).
		\end{equation}
		By Lemmas \ref{lemhilbert} and \ref{lemdense}, $\operatorname{dom}\left(D_A^2 + V(x)\right)$ is dense in the Hilbert space $H^1_A(\G,\C)$  with respect to the inner product $(\cdot,\cdot)$.
		Then, by \cite[Theorem 29.4]{Serov}, we have that there exists a self-adjoint extension (Friedrichs extension) $\tilde{T}$ of $D_A^2 + V(x)$.
		Moreover, by \eqref{Q0Q}, we have that for every $u,v\in H^1_A(\G,\C)=\operatorname{dom}(Q)$,
		$$
		Q(u,v)=\lim_{n \to +\infty}Q_0(u_n,v_n),
		$$
		where $u_n,v_n \in \operatorname{dom}(Q_0)$ and $u_n \to u$, $v_n \to v$ in $H^1_A(\G,\C)$.
		Then, since by Theorem \ref{thext}, $T$ is the unique self-adjoint operator associated with $Q$ (recall that, in the proof of \cite[Theorem 29.4]{Serov}, $\tilde{T}$ is defined as the self-adjoint operator associated with $Q$), we conclude that $T=\tilde{T}$.
	\end{proof}
	\begin{remark}
		Due to Lemma \ref{lemfri}, throughout this paper, unless otherwise specified, we denote with $D_A^2 + V(x)$ the {\em Friedrichs extension} $T$ in Theorem \ref{thext}.
	\end{remark}

	We conclude this section recalling that, due to the assumptions on $A$ and $V$ and to the previous results, we can consider for $T$ the following classical result (see e.g. \cite[Theorem 5.15]{DBort}) to {\em justify} our assumptions (\ref{AssG}) and (\ref{AssGm}).
	\begin{theorem}\label{thminmiax}
		For $j \in \mathbb{N}^+$, let
		$$
		\lambda_j(T) := \inf_{M \in \mathcal{M}_j}\sup_{\substack{u \in M \setminus \{0\}}}\frac{(Tu,u)_2}{(u,u)_2},
		$$ where $\mathcal{M}_j$ denotes the family of $j$-dimensional subspaces of $\operatorname{dom}(T)$. 
		Then, for each $j \in \mathbb{N}^+$, one of the following alternatives holds.
		\begin{enumerate}[label=\rm(\roman*)]
			\item $\lambda_j(T)$ is the $j$-th eigenvalue (arranged in increasing order and counted with multiplicity) and there are at least $j$ eigenvalues below the essential spectrum.
			\item $\lambda_j(T)=\inf \sigma_{\rm ess}(T)$ and there are at most $j-1$ eigenvalues below the essential spectrum.
		\end{enumerate}
	\end{theorem}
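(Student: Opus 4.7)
The plan is to deduce Theorem \ref{thminmiax} from the spectral theorem for the self-adjoint, semi-bounded operator $T=D_A^2+V(x)$ produced in Theorem \ref{thext}. Since $V\geq 1$, we have $T\geq 1$, so the spectral theorem furnishes a projection-valued measure $\{E_T(\Omega)\}$ supported on $\sigma(T)\subset[1,+\infty)$ with $T=\int\lambda\,dE_T(\lambda)$ and $\operatorname{dom}(T)=\{u:\int\lambda^2\,d(E_T(\lambda)u,u)_2<+\infty\}$. Throughout, I will freely replace the ambient test class $\mathcal{M}_j$ of $j$-dimensional subspaces of $\operatorname{dom}(T)$ by $j$-dimensional subspaces of the form domain $H^1_A(\G,\mathbb{C})$ without changing the infimum: this is a standard form-core argument, justified here by the density of $\operatorname{dom}(T)=\widetilde{H}^2_c(\G,\mathbb{C})$ in $H^1_A(\G,\mathbb{C})$ proved in Lemma \ref{lemdense}, combined with the continuity of the Rayleigh quotient with respect to the form norm.

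The first step is the universal upper bound $\lambda_j(T)\leq\inf\sigma_{\rm ess}(T)$. Set $\Lambda:=\inf\sigma_{\rm ess}(T)$ (taken to be $+\infty$ if the essential spectrum is empty, in which case the inequality is trivial). For any $\varepsilon>0$, the range of the spectral projection $E_T([1,\Lambda+\varepsilon))$ is infinite-dimensional because every point in $[1,\Lambda+\varepsilon)\cap\sigma_{\rm ess}(T)$ contributes infinite-dimensional spectral mass. Selecting any $j$ orthonormal vectors in this range and taking $M$ to be their span yields $\sup_{u\in M\setminus\{0\}}(Tu,u)_2/(u,u)_2\leq\Lambda+\varepsilon$ by the functional calculus, so $\lambda_j(T)\leq\Lambda+\varepsilon$; letting $\varepsilon\downarrow 0$ gives the claim. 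The second step is to run the dichotomy by counting spectral mass strictly below $\Lambda$: if $\lambda<\Lambda$ is not in $\sigma_{\rm ess}(T)$, then the projection $E_T([1,\lambda])$ has finite rank equal to the total multiplicity of eigenvalues of $T$ in $[1,\lambda]$. Denoting by $N(\lambda):=\operatorname{rank}E_T([1,\lambda))$ the number of eigenvalues below $\lambda$ (with multiplicity), the two alternatives read: if $N(\Lambda)\geq j$, show that $\lambda_j(T)$ equals the $j$-th eigenvalue; if $N(\Lambda)\leq j-1$, show $\lambda_j(T)=\Lambda$.

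For the first alternative, let $e_1,\dots,e_j$ be orthonormal eigenvectors associated with the first $j$ eigenvalues $\mu_1\leq\dots\leq\mu_j$ (in increasing order, with multiplicity), and let $M_*=\operatorname{span}\{e_1,\dots,e_j\}$: the functional calculus gives $(Tu,u)_2\leq\mu_j(u,u)_2$ on $M_*$, so $\lambda_j(T)\leq\mu_j$. Conversely, for any $M\in\mathcal{M}_j$, a dimension count produces a nonzero $u\in M$ orthogonal to $\operatorname{Ran}E_T([1,\mu_j))$, hence $(Tu,u)_2\geq\mu_j(u,u)_2$, giving $\lambda_j(T)\geq\mu_j$. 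For the second alternative, note that any $M\in\mathcal{M}_j$ must, again by dimension, contain a nonzero $u$ orthogonal to $\operatorname{Ran}E_T([1,\Lambda))$, yielding $(Tu,u)_2\geq\Lambda(u,u)_2$ and thus $\lambda_j(T)\geq\Lambda$; combined with the first step this forces $\lambda_j(T)=\Lambda$. The only genuine obstacle is the form-core reduction in the opening paragraph: since $\mathcal{M}_j$ is defined through $\operatorname{dom}(T)$ rather than $H^1_A(\G,\mathbb{C})$, one must check that any $j$-dimensional subspace of $H^1_A(\G,\mathbb{C})$ can be approximated, in the form norm, by $j$-dimensional subspaces of $\operatorname{dom}(T)$ without losing the bound on the Rayleigh quotient; this is handled by applying Lemma \ref{lemdense} to a basis and invoking Gram–Schmidt stability, after which all of the above spectral-theoretic arguments go through verbatim.
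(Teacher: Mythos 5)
Your argument is correct in substance, but note that the paper does not actually prove this statement: Theorem \ref{thminmiax} is quoted as a classical result with a pointer to \cite[Theorem 5.15]{DBort}, so there is no in-paper proof to compare against. What you supply is the standard spectral-theorem proof of the min--max principle, and its three core steps are all sound: the universal bound $\lambda_j(T)\leq\Lambda:=\inf\sigma_{\rm ess}(T)$ via the infinite rank of $E_T([1,\Lambda+\varepsilon))$ (more precisely of $E_T((\Lambda-\varepsilon,\Lambda+\varepsilon))$, since $\Lambda\in\sigma_{\rm ess}(T)$ whenever the essential spectrum is nonempty); the identity $\lambda_j(T)=\mu_j$ when $N(\Lambda)\geq j$, obtained by testing on the span of the first $j$ eigenvectors and by the dimension-count lower bound against $\operatorname{Ran}E_T([1,\mu_j))$; and the forcing of $\lambda_j(T)=\Lambda$ when $N(\Lambda)\leq j-1$. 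Two inaccuracies are worth flagging, neither fatal. First, $\operatorname{dom}(T)$ is \emph{not} $\widetilde{H}^2_c(\G,\mathbb{C})$: by Lemma \ref{lemfri}, $T$ is the Friedrichs extension of the operator initially defined on $\widetilde{H}^2_c(\G,\mathbb{C})$, and the domain of that extension is in general strictly larger; your argument only needs that $\operatorname{dom}(T)$ is dense in $H^1_A(\G,\mathbb{C})$ in the form norm, which holds a fortiori by Lemma \ref{lemdense}. Second, the form-core reduction you call the ``only genuine obstacle'' is unnecessary for the theorem as stated: $\mathcal{M}_j$ already consists of subspaces of $\operatorname{dom}(T)$, the ranges of spectral projections of bounded intervals lie in $\operatorname{dom}(T)$, and your lower bounds hold for arbitrary $j$-dimensional $M\subset\operatorname{dom}(T)$, so the whole proof closes inside the operator domain. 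That reduction only becomes relevant when one wants the same infimum over form-domain subspaces --- which is in fact how the paper uses the result, e.g.\ in \eqref{l1} and in the argument surrounding \eqref{eqyperp} --- and there your density-plus-Gram--Schmidt argument is the right tool.
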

	Throughout this paper, we shall ignore the multiplicities of the eigenvalues and assume that each eigenvalue is distinct, except in the statement of the min--max theorem above.

	\section{Preliminaries}
	\label{sec:preliminaries}
	In this section we introduce some tools that will be useful in the remaining part of the paper and prove a non-existence result.
	\subsection{Diamagnetic inequality} 
	We first deduce the diamagnetic inequality on locally finite connected metric graphs by following the argument of \cite[Theorem 7.21]{Lieb}. 
	\begin{lemma}\label{lemdiam}
		Let $\G$ be a locally finite connected metric graph, $A: \G \to \mathbb{R}$ and $V: \G \to [1, +\infty)$ satisfying  $A \in L_{\rm loc}^2(\G,\R)$ and $V \in L_{\rm loc}^1(\G,\R)$, and $u \in H^1_A(\G,\C)$. Then $\abs{u} \in H^1(\G,\mathbb{R})$ and the {\em diamagnetic inequality}
		\begin{equation}\label{eqdiam}
			\abs{\abs{u}'(x)} \leq \abs{D_Au(x)}
		\end{equation}
		holds pointwise for almost every $x \in\G$.
	\end{lemma}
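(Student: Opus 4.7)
The plan is to reduce the statement to an edge-by-edge computation by working first on each $I_e$, where this is essentially the classical one-dimensional diamagnetic inequality from \cite{Lieb}, and then to assemble the conclusions globally using the continuity of $u$ at vertices and the local finiteness of $\G$. Fix an edge $e \in \E$ and recall that, as already observed in the proof of Lemma \ref{lemhilbert}, $u_e \in W^{1,1}_{\mathrm{loc}}(I_e,\C)$ with $u_e' = i D_{A_e} u_e + i A_e u_e$. In particular the chain rule for absolutely continuous functions applies to $|u_e|$, but only away from the zero set of $u_e$. To handle this the standard device is the regularization
\[
|u_e|_\varepsilon := \sqrt{|u_e|^2 + \varepsilon^2}, \qquad \varepsilon > 0,
\]
which is smooth in $u_e$ and satisfies $|u_e|_\varepsilon \to |u_e|$ pointwise as $\varepsilon \to 0^+$.

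First I would compute, by direct differentiation and the identity $u_e' = i D_{A_e} u_e + i A_e u_e$,
\[
|u_e|_\varepsilon' = \frac{\operatorname{Re}(\overline{u_e} u_e')}{|u_e|_\varepsilon}
= \frac{\operatorname{Re}\bigl(i \overline{u_e}\, D_{A_e} u_e + i A_e |u_e|^2\bigr)}{|u_e|_\varepsilon}
= -\frac{\operatorname{Im}\bigl(\overline{u_e}\, D_{A_e} u_e\bigr)}{|u_e|_\varepsilon}
\]
(note the real $A_e|u_e|^2$ term drops out), so that by Cauchy--Schwarz
\[
\bigl||u_e|_\varepsilon'\bigr| \le \frac{|u_e|}{|u_e|_\varepsilon}\,|D_{A_e} u_e| \le |D_{A_e} u_e|
\]
pointwise a.e.\ on $I_e$. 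Sending $\varepsilon \to 0^+$ and using that $|u_e|_\varepsilon' \to |u_e|'$ in $\mathcal{D}'(I_e)$ (as can be checked by the dominated convergence theorem against a test function, since $|u_e|_\varepsilon' \to \operatorname{Re}(\overline{u_e}u_e')/|u_e|$ on $\{u_e \neq 0\}$ and is bounded by $|D_{A_e} u_e| \in L^2_{\rm loc}$), I obtain $|u_e| \in W^{1,1}_{\mathrm{loc}}(I_e,\R)$ together with the pointwise bound \eqref{eqdiam} on each edge.

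It then remains to globalize. Continuity of $|u|$ on $\G$ is immediate: $u \in C(\G,\C)$ implies that at every vertex $v \in \V$ and every pair of edges $e,f \succ v$ one has $|u_e|(v) = |u_e(v)| = |u_f(v)| = |u_f|(v)$. The assumption $V \geq 1$ and $u \in H^1_A(\G,\C)$ give
\[
\int_\G |u|^2\, dx \le \int_\G V(x)|u|^2\, dx < +\infty,
\]
so $|u| \in L^2(\G,\R)$, while integrating the edgewise bound and summing over $e \in \E$ yields
\[
\int_\G \bigl||u|'\bigr|^2 \, dx \le \int_\G |D_A u|^2 \, dx < +\infty,
\]
so $|u|' \in L^2(\G,\R)$. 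Combining the three facts gives $|u| \in H^1(\G,\R)$ and the pointwise inequality \eqref{eqdiam}. The only subtle step is the passage to the limit as $\varepsilon \to 0^+$ to identify the distributional derivative of $|u|$ with $-\operatorname{Im}(\overline{u_e}D_{A_e}u_e)/|u_e|$ off the zero set; everything else is bookkeeping allowed by the local finiteness of $\G$ and the definition of $H^1_A(\G,\C)$.
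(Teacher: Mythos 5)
Your proof is correct and follows essentially the same route as the paper's: the paper invokes \cite[Theorem 6.17]{Lieb} for the a.e.\ chain-rule formula $\abs{u_e}'=\operatorname{Re}(\overline{u_e}u_e'/\abs{u_e})$ off the zero set of $u_e$, observes that the real term $A_e\abs{u_e}^2$ drops out so that $u_e'$ may be replaced by $D_{A_e}u_e$, and concludes by $\abs{\operatorname{Re}z}\leq\abs{z}$, exactly as you do. The only difference is that you unpack the regularization $\sqrt{\abs{u_e}^2+\varepsilon^2}$ explicitly instead of citing Lieb--Loss; the globalization via continuity at vertices, $V\geq 1$, and summation over edges matches the paper's.
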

	\begin{proof}
		For any $u \in H^1_A(\G,\C)$ and any $e \in \E$, as in \cite[Theorem 6.17]{Lieb}, we have
		\begin{equation}\label{eqabsu'}
			\abs{u_e}'(x) = 
			\begin{cases}
				\displaystyle \operatorname{Re}\left(\frac{\overline{u_e}}{\abs{u_e}}u_e'\right)(x)\quad &\text{if } u_e(x)\neq0,\\
				0\quad &\text{if } u_e(x)=0.
			\end{cases}
		\end{equation}
		Since, if $u_e(x)\neq 0$,
		$$\operatorname{Re}\left(\frac{\overline{u_e}}{\abs{u_e}}iA_eu_e\right)(x)=\operatorname{Re}\left(iA_e\abs{u_e}\right)(x)=0,$$
		\eqref{eqabsu'} reads as
		\begin{equation*}
			\abs{u_e}'(x) = 
			\begin{cases}
				\displaystyle \operatorname{Re}\left(\frac{\overline{u_e}}{\abs{u_e}}D_{A_e}u_e\right)(x)\quad &\text{if } u_e(x)\neq0,\\
				0\quad &\text{if } u_e(x)=0.
			\end{cases}
		\end{equation*}
		Then \eqref{eqdiam} follows from the fact that $\abs{\operatorname{Re}z}<\abs{z} $.\\
		Finally, since $D_Au \in L^2(\G,\C)$, we conclude that $\abs{u}' \in L^2(\G,\R)$. Moreover, by the definition of $H^1_A(\G,\C)$, we know that $\abs{u}$ is continuous on $\G$ and $\abs{u} \in L^2(\G,\R)$, and thus, $\abs{u} \in H^1(\G,\mathbb{R})$.
	\end{proof}
	\subsection{Gagliardo-Nirenberg-Sobolev inequalities}
	\label{subsecineq}
	In the remainder of this paper, we always assume that $\G$ is a compact graph or noncompact metric graph with finite numbers of edges. Thus, in this subsection, we first introduce the Gagliardo-Nirenberg-Sobolev inequalities on compact graphs and noncompact metric graphs with finite numbers of edges, then, we extend our results from $V$ which satisfies $V: \G \to [1, +\infty)$  and $V \in L_{\rm loc}^1(\G,\R)$, to $V+V_q$ for any $V_q \in L^q(\G,\R)$ with $1\leq q\leq +\infty$.
	
	The next lemma is an immediate consequence of Lemma \ref{lemdiam} and of the Gagliardo-Nirenberg-Sobolev inequality \cite[Theorem 1]{Ha2} for compact graphs (applying it to every edge $e \in \E$ and then summing up) and \cite[Section 2]{AST2} for noncompact metric graphs with finite numbers of edges.
	\begin{lemma}\label{lemgnsg}
		Let $p \geq 2$. Let $A: \G \to \mathbb{R}$ and $V: \G \to [1, +\infty)$ satisfy $A \in L_{\rm loc}^2(\G,\R)$ and $V \in L_{\rm loc}^1(\G,\R)$. For any compact metric graph $\G$, there exist $C_{p,\G}, C_{\infty,\G}>0$ such that, for every $u \in H^1_A(\G,\C)$,
		\[
		\|u\|_p^p \leq C_{p,\G} \norm{u}^{\frac{p}{2}-1}\norm{u}_2^{\frac{p}{2}+1}
		\text{ and }
		\norm{u}_\infty\leq C_{\infty,\G} \norm{u}^\frac{1}{2}\norm{u}_2^{^\frac{1}{2}}.
		\]
		For any noncompact metric graph $\G$ with a finite number of edges, then there exist $C_{p,\G}, C_{\infty,\G}>0$ such that, for every $u \in H^1_A(\G,\C)$,
		\[
		\|u\|_p^p \leq C_{p,\G}  \norm{D_Au}^{\frac{p}{2}-1}_2\norm{u}_2^{\frac{p}{2}+1}
		\text{ and }
		\norm{u}_\infty\leq C_{\infty,\G} \norm{D_Au}_2^\frac{1}{2}\norm{u}_2^{^\frac{1}{2}}.
		\]
	\end{lemma}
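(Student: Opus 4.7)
The plan is to reduce the magnetic Gagliardo-Nirenberg-Sobolev inequalities to their non-magnetic counterparts via the diamagnetic inequality proved in Lemma \ref{lemdiam}. Given $u \in H^1_A(\G,\C)$, Lemma \ref{lemdiam} yields $\abs{u} \in H^1(\G,\R)$ together with the pointwise bound $\abs{\abs{u}'(x)} \leq \abs{D_Au(x)}$, which after integration gives $\bigl\||u|'\bigr\|_2 \leq \norm{D_Au}_2$. Since, moreover, $V \geq 1$, one has $\norm{u}_2^2 \leq \int_\G V(x)\abs{u}^2\,dx$, so that
\[
\bigl\||u|\bigr\|_{H^1(\G,\R)}^2 = \bigl\||u|'\bigr\|_2^2 + \norm{u}_2^2 \leq \norm{D_Au}_2^2 + \int_\G V(x)\abs{u}^2\,dx = \norm{u}^2.
\]

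For the compact case, I would apply the real-valued Gagliardo-Nirenberg-Sobolev inequality of \cite[Theorem 1]{Ha2} edge by edge and sum over the finite edge set of $\G$ to obtain constants $C_{p,\G}, C_{\infty,\G}>0$ such that, for the nonnegative function $\abs{u}$,
\[
\bigl\||u|\bigr\|_p^p \leq C_{p,\G}\, \bigl\||u|\bigr\|_{H^1(\G,\R)}^{\frac{p}{2}-1}\bigl\||u|\bigr\|_2^{\frac{p}{2}+1}, \qquad \bigl\||u|\bigr\|_\infty \leq C_{\infty,\G}\, \bigl\||u|\bigr\|_{H^1(\G,\R)}^{\frac{1}{2}}\bigl\||u|\bigr\|_2^{\frac{1}{2}}.
\]
Since $\norm{u}_p=\bigl\||u|\bigr\|_p$, $\norm{u}_2=\bigl\||u|\bigr\|_2$, $\norm{u}_\infty=\bigl\||u|\bigr\|_\infty$, inserting the estimate $\bigl\||u|\bigr\|_{H^1(\G,\R)} \leq \norm{u}$ yields the two claimed bounds.

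For the noncompact case with finitely many edges, I would repeat the same scheme, this time invoking the real Gagliardo-Nirenberg-Sobolev inequality of \cite[Section 2]{AST2}:
\[
\bigl\||u|\bigr\|_p^p \leq C_{p,\G}\, \bigl\||u|'\bigr\|_2^{\frac{p}{2}-1}\bigl\||u|\bigr\|_2^{\frac{p}{2}+1}, \qquad \bigl\||u|\bigr\|_\infty \leq C_{\infty,\G}\, \bigl\||u|'\bigr\|_2^{\frac{1}{2}}\bigl\||u|\bigr\|_2^{\frac{1}{2}}.
\]
The conclusion then follows from the pointwise bound $\bigl\||u|'\bigr\|_2 \leq \norm{D_Au}_2$. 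Observe that, because the noncompact statement involves only $\norm{D_Au}_2$ (not the full norm $\norm{u}$), the $V\geq 1$ ingredient is no longer needed in this step.

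There is essentially no hard obstacle: all the work sits in the diamagnetic inequality of Lemma \ref{lemdiam}, which is already available. The only mild care needed is bookkeeping, namely (i) checking that in the compact case the two terms $\bigl\||u|'\bigr\|_2^2$ and $\norm{u}_2^2$ are simultaneously controlled by $\norm{u}^2$, which uses both $V\geq 1$ and the diamagnetic inequality, and (ii) ensuring the cited real-valued inequality from \cite[Theorem 1]{Ha2} (originally stated on a single interval) is applied to each $\abs{u}_e$ and that the finite sum over $e\in\E$ preserves the constants up to a factor depending only on $\G$.
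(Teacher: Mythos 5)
Your proposal is correct and follows exactly the route the paper indicates: the paper gives no detailed proof, stating only that the lemma is an immediate consequence of the diamagnetic inequality (Lemma \ref{lemdiam}) combined with the real-valued Gagliardo--Nirenberg--Sobolev inequalities of \cite[Theorem 1]{Ha2} (applied edge by edge and summed) in the compact case and of \cite[Section 2]{AST2} in the noncompact case. Your bookkeeping — using $V\geq 1$ together with $\bigl\||u|'\bigr\|_2\leq\norm{D_Au}_2$ to control $\bigl\||u|\bigr\|_{H^1(\G,\R)}$ by $\norm{u}$ in the compact case, and only $\norm{D_Au}_2$ in the noncompact case — fills in precisely the details the authors leave implicit.
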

	Then, for all $V_q \in L^q(\G,\R)$ with 
	$1\leq q < +\infty$, by Lemma \ref{lemgnsg}, H\"older's inequality and Young's inequality, we have that, for any $\varepsilon>0$, there exists $C_\varepsilon>0$ such that, for every $u \in H^1_A(\G,\C)$,
	$$
	\int_{\G} \abs{V_q(x)}\abs{u}^2\,dx
	\leq \norm{V_q}_q\norm{u}^2_{\frac{2q}{q-1}}
	\leq C \norm{V_q}_q\norm{u}^\frac{1}{q}\norm{u}_2^\frac{2q-1}{q}
	\leq \varepsilon\norm{u}^2 +C_\varepsilon\norm{V_q}_q^{\frac{2q}{2q-1}}\norm{u}_2^2\,
	\text{ for }q>1,
	$$
	\[
	\int_{\G} \abs{V_q(x)}\abs{u}^2\,dx
	\leq \norm{V_q}_1 \norm{u}^2_{\infty}
	\leq C \norm{V_q}_q\norm{u}\norm{u}_2
	\leq \varepsilon\norm{u}^2 +C_\varepsilon\norm{V_q}_q^2\norm{u}_2^2\,
	\text{ for }q=1.
	\]
	Let $\varepsilon=1/2$. Then, for all $\displaystyle \nu\geq 1 + C_\frac{1}{2}\norm{V_q}_q^{\frac{2q}{2q-1}}$, we obtain that, for every $u \in H^1_A(\G,\C)$,
	$$
	\frac{1}{2}\norm{u}^2 + \norm{u}_2^2 \leq \int_\G \abs{D_Au}^2+  \left(V(x) +V_q(x)+\nu\right)\abs{u}^2\,dx \leq \frac{3}{2}\norm{u}^2 + (2\nu-1)  \norm{u}_2^2.
	$$
	Moreover, for $V_\infty \in L^\infty(\G,\R)$, we have that, for all $\nu\geq  \norm{V_\infty}_\infty$ and $u \in H^1_A(\G,\C)$,
	$$
	\norm{u}^2  \leq \int_\G \abs{D_Au}^2+  \left(V(x) +V_\infty(x)+\nu\right)\abs{u}^2\,dx \leq \norm{u}^2 + 2\nu  \norm{u}_2^2.
	$$
	Then, for any $V_q \in L^q(\G,\R)$ with $1\leq q \leq +\infty$, for suitable $\nu$'s, we can define an equivalent norm on $H^1_A(\G,\C)$ by
	$$
	\left(\int_\G \abs{D_Au}^2+  \left(V(x) +V_q(x)+\nu\right)\abs{u}^2\,dx\right)^\frac{1}{2}.
	$$ 
	Hence, Theorems \ref{th2}--\ref{th4}, \ref{thn3}, \ref{thn4} and Remarks \ref{resubgraph}--\ref{regrow} and \ref{recedge} can be extended to the magnetic Schr\"odinger operator $D_A^2 +\left(V(x) +V_q(x)+\nu\right)$.
	
	\subsection{A non-existence result}\label{secnon}
	Let us set
	\begin{equation}\label{mucp}
		\mu_{c,p}:=
		\begin{cases}
			C_{p,\G}^{\frac{2}{2-p}}\lambda_1^{\frac{6-p}{2p-4}}  &\text{ if } 2<p\leq 6,\\
			\displaystyle C_{p,\G}^{\frac{2}{2-p}}\left(\frac{2pc}{p-2}\right)^{\frac{6-p}{2p-4}} &\text{ if } p > 6,
		\end{cases}
	\end{equation}
	for any $c>0$, where $\lambda_1$ and $C_{p,\G}$ are defined respectively in \eqref{l1} and in Lemma \ref{lemgnsg}, and
	$$\mu_{\lambda, p}^*:=
	\left(\frac{4}{6-p}\right)^\frac{6-p}{2p-4}\left(\frac{4}{p-2}\right)^\frac{1}{2}C_{p,\G}^{\frac{2}{2-p}}\abs{\lambda}^{\frac{6-p}{2p-4}}
	$$
	for $\lambda<0$ and $2<p<6$.
	
	Here we present a key lemma establishing a nonexistence result, which rules out case (\ref{th2case2}) in Theorem \ref{th2} and plays a crucial role in the proofs of Theorems \ref{th3}, \ref{th4}, \ref{thn3} and \ref{thn4}.
	\begin{lemma}\label{lemnonex}
		For any $c>0$ and $0<\nu<\mu_{c,p}$, there exists no $u \in H_\nu(\G)$ such that $E'(u,\G) = 0$ and $E(u,\G)\leq c\mu_{c,p}$. Moreover, if $2<p<6$, then for any $\lambda<0$ and $0<\nu<\mu_{\lambda,p}^*$, there exists no $u \in H_\nu(\G)$ such that $\langle E'(u,\G),\cdot \rangle-\lambda(u,\cdot)_2 = 0$.
	\end{lemma}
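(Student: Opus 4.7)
Both parts proceed by contradiction, using the Nehari-type identity obtained by testing the Euler--Lagrange equation against $u$ itself, combined with the Gagliardo--Nirenberg--Sobolev inequality of Lemma \ref{lemgnsg}, which for $u\in H_\nu(\G)$ gives
\[
\int |u|^p\le C_{p,\G}\,\norm{u}^{(p-2)/2}\nu^{(p+2)/4}
\]
(directly in the compact case, and via $\norm{D_A u}_2\le\norm{u}$ in the noncompact case, since $V\ge 1$).

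For the first part, testing $E'(u,\G)=0$ against $u$ yields $\norm{u}^2=\int|u|^p$, hence $E(u,\G)=\tfrac{p-2}{2p}\norm{u}^2$. When $2<p\le 6$, combining GN with the Rayleigh bound $\norm{u}^2\ge\lambda_1\nu$ supplied by the variational characterization \eqref{l1} gives $(\lambda_1\nu)^{(6-p)/4}\le C_{p,\G}\nu^{(p+2)/4}$, which rearranges exactly to $\nu\ge C_{p,\G}^{2/(2-p)}\lambda_1^{(6-p)/(2p-4)}=\mu_{c,p}$, contradicting $\nu<\mu_{c,p}$. (For $p=6$ the exponent on $\norm{u}$ vanishes and the chain reads $1\le C_{6,\G}\nu^2$, yielding the same conclusion with $\lambda_1$ playing no role.) When $p>6$, GN instead reverses to $\norm{u}^2\ge C_{p,\G}^{-4/(p-6)}\nu^{-(p+2)/(p-6)}$; the matching upper bound on $\norm{u}^2$ is then provided by the energy hypothesis $E(u,\G)\le c\mu_{c,p}$, giving $\norm{u}^2\le\frac{2pc}{p-2}\mu_{c,p}$. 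Substituting the explicit form of $\mu_{c,p}$ and unwinding exponents once more collapses to $\nu\ge\mu_{c,p}$.

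For the second part, testing $\langle E'(u,\G),\cdot\rangle-\lambda(u,\cdot)_2=0$ against $u$ gives $\norm{u}^2+\abs{\lambda}\nu=\int|u|^p$, since $\lambda<0$. Coupled with GN, this yields
\[
\norm{u}^2+\abs{\lambda}\nu\le C_{p,\G}\,(\norm{u}^2)^{(p-2)/4}\nu^{(p+2)/4}.
\]
The key step is Young's inequality applied to the right-hand side with conjugate exponents $r=4/(p-2)$ and $r'=4/(6-p)$ (both strictly positive because $2<p<6$), together with a free parameter $\epsilon>0$. Choosing $\epsilon$ so that the coefficient of $\norm{u}^2$ arising from Young's splitting equals exactly $1$ forces the $\norm{u}^2$ terms on the two sides to cancel, leaving a pure inequality between $\abs{\lambda}\nu$ and $\nu^{(p+2)/(6-p)}$ which, raised to the power $(6-p)/(2(p-2))$, reads precisely $\nu\ge\mu_{\lambda,p}^*$ and yields the contradiction. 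An equivalent packaging is to minimize the one-variable function $g(y)=y+\abs{\lambda}\nu-C_{p,\G}y^{(p-2)/4}\nu^{(p+2)/4}$ over $y=\norm{u}^2>0$ and impose $g(y^*)\le 0$ at its unique interior minimum.

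The only serious obstacle I anticipate is pure bookkeeping: verifying that the sharp Young splitting (equivalently, the minimization of $g$) reproduces exactly the constants $\left(4/(6-p)\right)^{(6-p)/(2p-4)}$ and $\left(4/(p-2)\right)^{1/2}$ appearing in $\mu_{\lambda,p}^*$. This hinges on the collapsing identity $(6-p)/(2p-4)+1/2=2/(p-2)$ for the exponents of $4$, after which everything matches term by term. No further analytical input beyond the Nehari identity and Lemma \ref{lemgnsg} is required.
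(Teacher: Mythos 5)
Your proposal is correct and follows essentially the same route as the paper: the Nehari identity $\norm{u}^2-\lambda\norm{u}_2^2=p\Psi(u,\G)$ combined with Lemma \ref{lemgnsg}, the Rayleigh bound $\norm{u}^2\geq\lambda_1\nu$ for $2<p\leq 6$, the energy bound $\norm{u}^2\leq\frac{2pc}{p-2}\mu_{c,p}$ for $p>6$, and the sharp Young splitting for the second part, all of which reproduce the paper's constants exactly. The only difference is cosmetic: in the $p>6$ case you sandwich $\norm{u}^2$ between the reversed GN lower bound and the energy upper bound, whereas the paper factors $p\Psi(u,\G)$ into the powers $4/(p-2)$ and $(p-6)/(p-2)$ and cancels $\norm{u}^2$ — algebraically the same computation.
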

	\begin{proof}
		Let us prove the first part. Arguing by contradiction, assume that there exists $u \in H_\nu(\G)$ such that $E'(u,\G) = 0$ and $E(u,\G)\leq c\mu_{c,p}$ for some $0<\nu<\mu_{c,p}$.\\    
		Let $2<p< 6$. Since $E'(u,\G) = 0$, by Lemma \ref{lemgnsg} we have
		\begin{equation}\label{eqnonex}
			\norm{u}^2
			= p\Psi(u,\G) 
			\leq   C_{p,\G}\nu^{\frac{p}{4}+\frac{1}{2}}\norm{u}^{\frac{p}{2}-1},
		\end{equation}
		and thus, since $\norm{u}^2 \geq \lambda_1 \norm{u}^2_2$ for all $u \in H^1_A(\G,\C)$,
		$$
		\lambda_1^{\frac{3}{2}-\frac{p}{4}}\nu^{\frac{3}{2}-\frac{p}{4}}\leq \norm{u}^{3-\frac{p}{2}} \leq C_{p,\G}\nu^{\frac{p}{4}+\frac{1}{2}},
		$$
		which leads to a contradiction, since $\nu < \mu_{c,p}=C_{p,\G}^{\frac{2}{2-p}}\lambda_1^{\frac{6-p}{2p-4}}$.\\ For $p=6$, \eqref{eqnonex} contradicts $\nu < \mu_{c,6}=C_{6,\G}^{-\frac{1}{2}}$.\\ 
		Now, let $p>6$. Then, it follows from $E'(u,\G) = 0$ and $E(u,\G)\leq c\mu_{c,p}$ that
		$$
		\Psi(u,\G) = \frac{2E(u,\G) - \langle E'(u,\G),u\rangle}{p-2}\leq \frac{2c\mu_{c,p}}{p-2}.
		$$
		By $E'(u,\G) = 0$ and Lemma \ref{lemgnsg}, we obtain
		$$
		\norm{u}^2
		= p\Psi(u,\G) 
		= \left(p\Psi(u,\G)\right)^\frac{4}{p-2}\left(p\Psi(u,\G)\right)^\frac{p-6}{p-2} 
		\leq  C_{p,\G}^\frac{4}{p-2}\norm{u}^2\nu^{\frac{p+2}{p-2}}\left(\frac{2pc\mu_{c,p}}{p-2}\right)^{\frac{p-6}{p-2}},
		$$
		and thus,
		$$
		C_{p,\G}^{-\frac{4}{p-2}}\left(\frac{2pc}{p-2}\right)^{\frac{6-p}{p-2}}\leq\nu^{\frac{p+2}{p-2}}\mu_{c,p}^{\frac{p-6}{p-2}} <\mu_{c,p}^{\frac{2p-4}{p-2}},
		$$
		which leads to a contradiction with the definition of $\mu_{c,p}$.
		
		To prove the second part, let $2<p<6$ and, arguing by contradiction, assume that there exists $u \in H_\nu(\G)$ and $\lambda<0$ such that $\langle E'(u,\G),\cdot \rangle-\lambda(\cdot,u)_2 = 0$. Then, by  Lemma \ref{lemgnsg} and Young's inequality, we deduce that 
		$$
		\begin{aligned}
			\norm{u}^2 -\lambda\norm{u}_2^2 &= p\Psi(u,\G)
			\leq C_{p,\G}\nu^{\frac{p}{4}+\frac{1}{2}}\norm{u}^{\frac{p}{2}-1}
			= \left(\frac{4}{p-2}\right)^\frac{2-p}{4}C_{p,\G}\nu^{\frac{p+2}{4}}\left(\frac{4}{p-2}\right)^\frac{p-2}{4}\norm{u}^{\frac{p-2}{2}} \\
			&\leq \frac{6-p}{4}\left(\frac{4}{p-2}\right)^\frac{2-p}{6-p}C_{p,\G}^\frac{4}{6-p}\nu^\frac{p+2}{6-p}+\norm{u}^2,
		\end{aligned}
		$$
		namely $\nu\geq \mu_{\lambda,p}^*$, which leads to a contradiction.
	\end{proof}
	
	\section{Modified functionals}\label{seccompact}
	
	Let us fix $p>2$ and $\mu >0$. As in \cite{Alves,Es,Bu}, to find  critical points of $E(\cdot, \G)$ constrained on $H_\mu (\G)$, we first look for  critical points of the following family of modified functionals
	$$
	E_{r, \mu}(u,\G) := \frac{1}{2}\norm{u}^{2}-\Psi(u,\G)-H_{r,\mu}(u, \G),\; u \in U_{\mu},
	$$
	where $U_{\mu} := \{u \in H_A^1(\G,\C): \norm{u}_2^2<\mu\}$, $H_{r, \mu}(u)$ is a penalization term defined by
	$$
	H_{r, \mu}(u,\G) := f_{r}\left(\frac{\norm{u}_2^2}{\mu}\right),  \text { with } f_{r}(s) := \frac{s^{r}}{1-s} \; (0 \leq s<1),
	$$
	and $r>1$ is a parameter that will be chosen large enough.
	
	A straightforward computation gives
	\begin{equation}\label{eqfr's}
		f_{r}^{\prime}(s)=\frac{r s^{r-1}}{1-s}+\frac{s^{r}}{(1-s)^{2}}>\frac{r}{s} f_{r}(s)>0, \; \quad \text{for } s \in (0,1)
	\end{equation}
	and
	\begin{equation} \label{eqfr''s}
		f''_r(s)=\frac{r(r-1)s^{r-2}}{1-s}+\frac{rs^{r-1}}{(1-s)^2}+\frac{rs^{r-1}}{(1-s)^2}+\frac{2s^{r}}{(1-s)^3}>0,\; \quad \text{for } s \in (0,1),
	\end{equation}
	and so $f'_r$ is increasing for $s \in [0,1)$.
	
	Fixing
	$$h_r(s)=f'_r(s)s-f_r(s) \text{ for } s \in [0,1),$$
	the information above yields
	\begin{equation} \label{eqhrs}
		h'_r(s)=f''_r(s)s>0, \; \quad \text{for } s \in (0,1),
	\end{equation}
	showing that $h_r$ is an increasing function for $s \in [0,1)$.
	
	It is standard to prove
	that $E_{r, \mu}(\cdot, \G) \in C^1(U_{\mu},\R)$ and for any $u \in U_{\mu}$ and $v \in H_A^1(\G,\C)$, one has, for any compact metric graph $\G$,
	$$
	\langle E_{r, \mu}'(u,\G),v\rangle = (u,v)-\operatorname{Re}\int_\G \abs{u}^{p-2}u\bar{v} \,dx-\frac{2}{\mu}f'_r\left(\frac{\norm{u}_2^2}{\mu}\right)(u,v)_2,
	$$
	or, for any noncompact metric graph $\G$ with a non-empty compact core $\K$,
	$$
	\langle E_{r, \mu}'(u,\G),v\rangle = (u,v)-\operatorname{Re}\int_\K \abs{u}^{p-2}u\bar{v} \,dx-\frac{2}{\mu}f'_r\left(\frac{\norm{u}_2^2}{\mu}\right)(u,v)_2.
	$$

	Next, we show that, for any $r >1$ sufficiently large, Palais-Smale (PS) sequences of $E_{r,\mu}(\cdot,\G)$ converge to critical points of $E_{r,\mu}(\cdot,\G)$ and the  critical points of $E_{r,\mu}(\cdot,\G)$ converges to a critical point of $E(\cdot,\G)$ constrained on $H_\mu(\G)$ as $r\to +\infty$.
	
	Fix $\mu >0$ and assume that, for any $r >1$ sufficiently large, there exists a sequence $\{u_{n,r}\}_{n\geq 1}\subset U_\mu$ such that $\{u_{n,r}\}_{n\geq 1}$ is a (PS) sequence of $E_{r,\mu}(\cdot,\G)$ at level $c_r>0$
	Moreover, assume that $r \mapsto c_r$ is non-decreasing.
	We first show that the (PS) sequences are bounded.
	\begin{lemma}\label{lembounded}
		For $r>1$ sufficiently large, the (PS) sequence $\{u_{n,r}\}_{n\geq 1}$ at level $c_r$ is bounded in $H^1_A(\G,\C)$.
	\end{lemma}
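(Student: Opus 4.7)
The plan is a standard Ambrosetti--Rabinowitz-type boundedness argument, applied to the penalized functional: I will examine the linear combination $p E_{r,\mu}(u_{n,r},\G) - \langle E_{r,\mu}'(u_{n,r},\G), u_{n,r}\rangle$, designed so that the $L^p$-nonlinearity $\Psi$ cancels out and the penalization contributes with a sign that can be controlled via the properties of $f_r$ recorded in \eqref{eqfr's} and \eqref{eqhrs}.

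A direct computation gives, with $s_n := \norm{u_{n,r}}_2^2/\mu \in [0,1)$,
\begin{equation*}
p E_{r,\mu}(u_{n,r},\G) - \langle E_{r,\mu}'(u_{n,r},\G), u_{n,r}\rangle = \frac{p-2}{2}\norm{u_{n,r}}^2 + 2 f_r'(s_n)\,s_n - p f_r(s_n),
\end{equation*}
since the $p\Psi$-terms cancel and $\tfrac{2}{\mu}f_r'(s_n)\norm{u_{n,r}}_2^2$ collapses to $2 f_r'(s_n)\,s_n$. By the inequality $f_r'(s)\,s > r f_r(s)$ from \eqref{eqfr's}, the bracketed penalization term satisfies
\begin{equation*}
2 f_r'(s_n)\,s_n - p f_r(s_n) \geq (2r-p)\,f_r(s_n) \geq 0
\end{equation*}
as soon as $r \geq p/2$ (the case $s_n = 0$ being trivial since there $f_r(s_n) = 0$).

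Fixing any $r > \max\{1, p/2\}$, the (PS) conditions $E_{r,\mu}(u_{n,r},\G) = c_r + o(1)$ and $\langle E_{r,\mu}'(u_{n,r},\G), u_{n,r}\rangle = o(\norm{u_{n,r}})$, combined with the identity above, yield
\begin{equation*}
\frac{p-2}{2}\norm{u_{n,r}}^2 \leq p\,c_r + o(1) + o(\norm{u_{n,r}}),
\end{equation*}
and since $p > 2$ this quadratic-versus-linear inequality forces $\{u_{n,r}\}$ to be bounded in $H^1_A(\G,\C)$. I expect no real obstacle; the only structural point is that \eqref{eqfr's} makes $f_r$ sufficiently superlinear at the origin to absorb the factor $p$, which is precisely why \emph{``$r$ sufficiently large''} (concretely, $r \geq p/2$) is needed. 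The same computation applies verbatim to the noncompact localized case, with $\G$ replaced by $\K$ under the integral defining $\Psi$.
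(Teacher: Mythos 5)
Your proposal is correct and follows essentially the same route as the paper: both compute $pE_{r,\mu}(u_{n,r},\G)-\langle E_{r,\mu}'(u_{n,r},\G),u_{n,r}\rangle=\frac{p-2}{2}\norm{u_{n,r}}^2-pf_r(s_n)+2f_r'(s_n)s_n$, discard the penalization terms via $f_r'(s)s>rf_r(s)$ for $r\geq p/2$, and conclude from the resulting quadratic-versus-linear inequality. No gaps.
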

	\begin{proof}
		Since $\{u_{n,r}\}_{n\geq 1}$ is a (PS) sequence at level $c_r$, we have
		$$
		\begin{aligned}
			pc_r + o_n(1) +o_{n}(1)\norm{u_{n,r}} &= p E_{r, \mu}(u_{n,r},\G)- \langle  E_{r, \mu}^{\prime}(u_{n,r},\G), u_{n,r}\rangle\\
			& =\frac{p-2}{2}\norm{u_{n,r}}^2-pf_r\left(\frac{\norm{u_{n,r}}_2^2}{\mu}\right)+2f'_r\left(\frac{\norm{u_{n,r}}_2^2}{\mu}\right)\frac{\norm{u_{n,r}}^2_{2}}{\mu}.
		\end{aligned}
		$$
		Since, by \eqref{eqfr's}, $f_r^{\prime}(s)s > rf_r(s)$ and $f_r(s)>0$ for all $0 < s < 1$, it follows that, for $r \geq p/2$,
		\begin{equation*}
			pc_r + o_n(1) + o_{n}(1)\norm{u_{n,r}} \geq \frac{p-2}{2}\norm{u_{n,r}}^2,
		\end{equation*}
		from which we conclude.
	\end{proof}
	Now, let us define
	$$
	\lambda_{n,r} := \frac{2}{\mu}f'_r\left(\frac{\norm{u_{n,r}}_2^2}{\mu}\right),
	\qquad
	\lambda_{\infty,r}:= \limsup_{n \rightarrow+ \infty} \lambda_{n,r},
	$$
	and, since $r \mapsto c_r$ is non-decreasing and  $c_{r} >0$ for $r>1$,
	\[
	c_\infty:=\lim\limits_{r\to+\infty} c_r>0.
	\]
	We have
	\begin{lemma}\label{lemlambdanr}
		If $c_\infty<+\infty$, then, for $r>1$ sufficiently large, $\lambda_{\infty,r} < +\infty$, and
		$$
		\limsup_{r \rightarrow+ \infty}\lambda_{\infty,r} \leq \frac{2c_\infty}{\mu}.
		$$
	\end{lemma}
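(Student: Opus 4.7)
The plan is to extract from the (PS) information a scalar inequality that controls the penalization term $f_r'(s_n)$, and then to track its asymptotics as $r\to+\infty$.

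First, by Lemma \ref{lembounded} the sequence $\{u_{n,r}\}$ is bounded in $H^1_A(\G,\C)$, so every error of the form $o_n(1)\norm{u_{n,r}}$ collapses to $o_n(1)$. I will then form the Pohozaev-type combination
\begin{equation*}
2E_{r,\mu}(u_{n,r},\G)-\langle E_{r,\mu}'(u_{n,r},\G),u_{n,r}\rangle
=\frac{p-2}{p}\int_\K\abs{u_{n,r}}^{p}\,dx+2h_r(s_n),
\end{equation*}
where $s_n:=\norm{u_{n,r}}_2^2/\mu$ and $h_r$ is exactly the auxiliary function introduced before \eqref{eqhrs}. The left-hand side equals $2c_r+o_n(1)$, and the first term on the right is nonnegative, which yields the crucial scalar estimate $h_r(s_n)\leq c_r+o_n(1)$.

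By \eqref{eqhrs} the function $h_r$ is continuous and strictly increasing on $[0,1)$ with $h_r(0)=0$ and $h_r(1^-)=+\infty$, so there is a unique $\tau_r\in(0,1)$ such that $h_r(\tau_r)=c_r$. Passing to $\limsup_n$ and using the monotonicity and continuity of $h_r$ gives $s_\infty^r:=\limsup_n s_n\leq\tau_r<1$. Because $f_r'$ is continuous and increasing on $[0,1)$, this forces
\begin{equation*}
\lambda_{\infty,r}=\frac{2}{\mu}\limsup_n f_r'(s_n)=\frac{2}{\mu}f_r'(s_\infty^r)\leq\frac{2}{\mu}f_r'(\tau_r)<+\infty,
\end{equation*}
proving the first assertion of the lemma.

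For the asymptotic bound I exploit the algebraic identity $f_r'(\tau_r)\tau_r=f_r(\tau_r)+h_r(\tau_r)=f_r(\tau_r)+c_r$, so that $f_r'(\tau_r)=(f_r(\tau_r)+c_r)/\tau_r$. Writing $h_r(s)=(r-1)f_r(s)+s^{r+1}/(1-s)^2$, the equality $h_r(\tau_r)=c_r$ together with nonnegativity of both summands yields $f_r(\tau_r)\leq c_r/(r-1)\to 0$ as $r\to+\infty$. To control $\tau_r$ itself, I note that for any fixed $s_0\in(0,1)$ the quantity $h_r(s_0)$ tends to $0$ as $r\to+\infty$ (both summands contain the factor $s_0^{r}$, which decays exponentially), while $h_r(\tau_r)=c_r\to c_\infty>0$; the monotonicity of $h_r$ therefore forces $\tau_r\to 1$. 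Substituting these two limits into the identity for $f_r'(\tau_r)$ gives $f_r'(\tau_r)\to c_\infty$, and hence $\limsup_{r\to+\infty}\lambda_{\infty,r}\leq 2c_\infty/\mu$.

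The delicate step is the convergence $\tau_r\to 1$: a priori $\tau_r$ is only known to lie in $(0,1)$, and what makes it saturate is precisely the combination of the strict positivity of $c_\infty$ with the uniform pointwise decay of $h_r(s_0)$ on compact subsets of $[0,1)$. The hypothesis $r\to+\infty$ in the lemma is used exactly here and in the estimate $f_r(\tau_r)\leq c_r/(r-1)$; everything else is valid already for every $r>1$ sufficiently large so as to guarantee the existence and boundedness of the (PS) sequence coming from Lemma \ref{lembounded}.
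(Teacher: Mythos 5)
Your proof is correct and follows essentially the same route as the paper: the same identity $2E_{r,\mu}-\langle E_{r,\mu}',\cdot\rangle=(p-2)\Psi+2h_r$, the same monotonicity of $h_r$ and $f_r'$ to bound $\limsup_n s_n$ by a comparison point in $(0,1)$, and the same decomposition $h_r(s)=(r-1)f_r(s)+s^{r+1}/(1-s)^2$ to show that this point tends to $1$ and that $f_r'$ there tends to $c_\infty$. The only (harmless) difference is that you anchor your comparison point $\tau_r$ at level $c_r$ and pass to the limit afterwards, whereas the paper fixes $\xi_r$ directly at level $c_\infty$.
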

	\begin{proof}
		For every $r>1$, since $f_{r}^{\prime}(0)=f_{r}(0)=0$ and $f_{r}^{\prime}(s) s-f_{r}(s) \to +\infty$ as $s \to 1^{-}$, by continuity, there exists $\xi_{r} \in (0,1)$ such that
		$$
		c_{\infty}=f_{r}^{\prime}(\xi_{r})\xi_{r}-f_{r}(\xi_{r}).
		$$
		We now claim that $\xi_{r}\to 1^{-}$ as $r \to +\infty$. Argue by contradiction, suppose that there exists a subsequence $\{r_{n}\}$ with $r_{n} \nearrow +\infty$ (monotone sequence) such that $\xi_{r_n}\to \xi$ as $n \to +\infty$ for some $\xi \in [0,1)$. Then, 
		$$c_{\infty}=f_{r_n}^{\prime}(\xi_{r_n})\xi_{r_n}-f_{r_n}(\xi_{r_n})=\frac{(r_n-1) \xi_{r_n}^{r_n}}{1-\xi_{r_n}}+\frac{\xi_{r_n}^{r_n+1}}{(1-\xi_{r_n})^{2}} \to 0\quad \text{as } n\to +\infty,$$
		which contradicts $c_{\infty}>0$.\\
		Then, by \eqref{eqfr's}, we have 
		$$
		\frac{c_{\infty}}{r-1}=\frac{f_{r}^{\prime}(\xi_{r})\xi_{r}-f_{r}(\xi_{r})}{r-1}>f_{r}(\xi_{r})>0,
		$$
		so that
		$f_{r}(\xi_{r}) \to 0$ as $r \to +\infty$, and we have also that
		\begin{equation}\label{eqsupfr}
			\lim_{r \to +\infty}f_{r}^{\prime}(\xi_{r})
			= \lim_{r \to +\infty} \frac{f_{r}^{\prime}(\xi_{r})\xi_{r}-f_{r}(\xi_{r})+f_{r}(\xi_{r})}{\xi_{r}}
			= \lim_{r \to +\infty} \frac{c_{\infty}+f_{r}(\xi_{r})}{\xi_{r}}
			=c_{\infty}.
		\end{equation}
		Next, since $\{u_{n,r}\}_{n\geq 1}$ is a (PS) sequence at level $c_r$ and $p>2$, by Lemma \ref{lembounded}, one has
		$$
		\begin{aligned}
			o_{n}(1) &= \langle E_{r, \mu}^{\prime}(u_{n,r},\G), u_{n,r}\rangle\\
			& =2 E_{r, \mu}(u_{n,r},\G)-(p-2) \Psi(u_{n,r},\G)+2f_r\left(\frac{\norm{u_{n,r}}_2^2}{\mu}\right)-2f'_r\left(\frac{\norm{u_{n,r}}_2^2}{\mu}\right)\frac{\norm{u_{n,r}}^2_{2}}{\mu}\\
			&\leq 2 \left[ E_{r, \mu}(u_{n,r},\G)
			+ f_r\left(\frac{\norm{u_{n,r}}_2^2}{\mu}\right)
			-f'_r\left(\frac{\norm{u_{n,r}}_2^2}{\mu}\right)\frac{\norm{u_{n,r}}^2_{2}}{\mu}\right].
		\end{aligned}
		$$
		Then,
		
		$$
		\limsup_{n \to +\infty}\left[f'_r\left(\frac{\norm{u_{n,r}}_2^2}{\mu}\right)\frac{\norm{u_{n,r}}^2_{2}}{\mu}-f_r\left(\frac{\norm{u_{n,r}}_2^2}{\mu}\right)\right] 
		\leq \lim_{n \to +\infty} E_{r, \mu}(u_{n,r},\G)
		=c_{r}\leq c_{\infty}
		=f_{r}^{\prime}(\xi_{r}) \xi_{r}-f_{r}(\xi_{r})
		$$
		and so, since, by \eqref{eqhrs}, $s \mapsto f_{r}^{\prime}(s) s-f_{r}(s)$ is strictly increasing on $[0,1)$, we have $$\limsup _{n \to +\infty}\frac{\norm{u_{n,r}}_2^2}{\mu} \leq \xi_{r}.$$
		Thus, since by \eqref{eqfr''s}, $s \mapsto f'_{r}(s)$ is strictly increasing on $[0,1)$, using \eqref{eqsupfr}, we get
		$$
		\lambda_{\infty,r}=\frac{2}{\mu} \limsup _{n \to +\infty} f'_r\left(\frac{\norm{u_{n,r}}_2^2}{\mu}\right) \leq \frac{2}{\mu} f_{r}^{\prime}\left(\xi_{r}\right)<+\infty
		$$
		for $r>1$ large enough and
		$$
		\limsup _{r \to +\infty} \lambda_{\infty,r} \leq \lim_{r \to +\infty}\frac{2}{\mu} f_{r}^{\prime}\left(\xi_{r}\right) = \frac{2c_{\infty}}{\mu}.
		$$
	\end{proof}
	Now we want to prove the compactness of $\{u_{n,r}\}_{n\geq 1}$.
	First observe that, using for instance Theorem \ref{thminmiax}, since $V\geq 1$, then $\inf\sigma_{\rm ess}(D^2_A + V(x))\geq 1$ (if $\sigma_{\rm ess}(D^2_A + V(x))=\emptyset$, then we set $\inf\sigma_{\rm ess}(D^2_A + V(x))=+\infty$).
	
	This allow us to assume
	\begin{equation}\label{cmu2se}
		c_\infty < \frac{\mu}{2} \inf\sigma_{\rm ess}(D^2_A + V(x))
	\end{equation}
	and define
	$$
	Y:=\operatorname{span}\left\{u \in H^1_A(\G,\C):(D^2_A + V(x)) u=\lambda u \text{ for some }\lambda \leq \frac{2c_\infty}{\mu}\right\}
	$$
	whose dimension, by \eqref{cmu2se}, is finite. Consequently,
	$H^1_A(\G,\C)$ possesses the orthogonal decomposition
	\begin{equation*}
		H^1_A(\G,\C) = Y \oplus Y^\perp
	\end{equation*}
	and so that  for any $u\in Y$ and $v \in Y^\perp$, $(u,v)_2=0$.
	
	Let $P$ denote the orthogonal projector from $H^1_A(\G,\C)$ onto $Y$. Then, every $u \in H^1_A(\G,\C)$ can be written as
	\begin{equation*}
		u = Pu + (I-P)u,
	\end{equation*}
	where $Pu \in Y$ and $(I-P)u \in Y^\perp$. Moreover, let $\delta >0$ be small enough such that $2{c_\infty}/\mu+\delta < \inf\sigma_{\rm ess}(D^2_A + V(x))$. Applying the same orthogonal decomposition as before for
	$$
	Y_\delta:=\operatorname{span}\left\{u \in H^1_A(\G,\C):(D^2_A + V(x)) u=\lambda u \text{ for some }\lambda \leq \frac{2{c_\infty}}{\mu}+\delta\right\}
	$$
	we have that the dimension of  $Y_\delta$ is finite. Then,  if $\delta$ is so small such that there are no $u\in Y_\delta$ such that $(D^2_A + V(x)) u=\lambda u$
	for $\lambda\in(2{c_\infty}/\mu, 2{c_\infty}/\mu + \delta]$, we have $Y_{\delta}=Y$.
	Moreover, by Theorem \ref{thminmiax}, for any $v\in Y^\perp=Y^\perp_{\delta}$, we have
	\begin{equation}\label{eqyperp}
		\norm{v}^2 \geq \left(\frac{2{c_\infty}}{\mu}+\delta\right)\norm{v}^2_2.
	\end{equation}
	Indeed, assume by contradiction that there exists $v \in Y^\perp\backslash\{0\}$ and 
	$$
	\norm{v}^2 < \left(\frac{2{c_\infty}}{\mu}+\delta\right)\norm{v}^2_2.
	$$
	Then, for any $u \in Y_\delta\backslash\{0\}$, we have 
	\begin{equation}\label{equ+v}
		\frac{\|u+v\|^2}{\|u+v\|_2^2}
		= \frac{\|u\|^2+\|v\|^2}{\|u\|_2^2+\|v\|_2^2}
		\leq \max\left\{\frac{\|u\|^2}{\|u\|_2^2},\frac{\|v\|^2}{\|v\|_2^2}\right\}
		\leq \frac{2{c_\infty}}{\mu} + \delta.
	\end{equation}
	Analogously we can see that, for any $u,w\in  Y_\delta\backslash\{0\}$ with $u,w$ eigenfunctions corresponding to different eigenvalues,
	we have  
	$$
	\frac{\|u+w\|^2}{\|u+w\|_2^2}
	\leq \max\left\{\frac{\|u\|^2}{\|u\|_2^2},\frac{\|w\|^2}{\|w\|_2^2}\right\}
	\leq \frac{2{c_\infty}}{\mu} + \delta.
	$$
	Then, since $\operatorname{dim}Y_\delta<+\infty$,
	$$
	\sup_{\substack{u \in Y_\delta\setminus \{0\}}}\frac{\|u\|^2}{\|u\|_2^2}
	\leq \frac{2{c_\infty}}{\mu} + \delta,
	$$
	and, by \eqref{equ+v},
	$$
	\sup_{\substack{u \in (Y_\delta \oplus \{tv : t \in \C\})\setminus \{0\}}}\frac{\|u\|^2}{\|u\|_2^2}
	\leq \frac{2{c_\infty}}{\mu} + \delta.
	$$
	Thus, by Theorem \ref{thminmiax},
	$$
	\lambda_{j+1}
	:=\inf_{M \in \mathcal{M}_j}\sup_{\substack{u \in M \setminus \{0\}}}\frac{\|u\|^2}{\|u\|_2^2}\leq 2{c_\infty}/\mu + \delta
	$$
	and, by \eqref{cmu2se}, it is the $j+1$-th eigenvalue, that is a contradiction.
	
	In the next lemma, under the assumption \eqref{cmu2se}, we prove the compactness of (PS) sequences. We give the proof when $\G$ is a noncompact metric graph with a non-empty compact core $\K$. If $\G$ is compact the proof is the same taking $\K=\G$.
	\begin{lemma}\label{lemunr}
		If \eqref{cmu2se} holds, then, for $r>1$ sufficiently large, there exists $u_r \in U_\mu$ such that, up to a subsequence, $u_{n,r} \to u_r$ in $H^1_A(\G,\C)$ as $n \to +\infty$.  Moreover, $u_r$ satisfies $$
		E_{r, \mu}(u_{r},\G)=c_{r} \text{ and } E_{r, \mu}^{\prime}(u_{r},\G)=0
		$$
		with 
		$$
		\frac{2}{\mu}f'_r\left(\frac{\norm{u_r}_2^2}{\mu}\right) = \lambda_{\infty,r}  \text{ and } \limsup_{r \to +\infty}\lambda_{\infty,r} \leq \frac{2c_\infty}{\mu}.
		$$
	\end{lemma}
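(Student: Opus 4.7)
The plan is to exploit the orthogonal decomposition $H^1_A(\G,\C)=Y\oplus Y^\perp$ already set up before the statement, together with the compact embedding of $H^1_A(\G,\C)$ into $L^p(\K,\C)$ from Remark \ref{recompact}. By Lemma \ref{lembounded}, $\{u_{n,r}\}$ is bounded, so up to a subsequence it converges weakly to some $u_r$ in $H^1_A(\G,\C)$ and strongly in $L^p(\K,\C)$; along a further subsequence we may also assume $\lambda_{n,r}\to \lambda_{\infty,r}$ and $\norm{u_{n,r}}_2\to\beta$ for some $\beta\in[0,\sqrt{\mu})$ (the strict inequality holds because $f'_r(s)\to+\infty$ as $s\to 1^-$ would force $\lambda_{n,r}\to+\infty$, contradicting Lemma \ref{lemlambdanr}). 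In particular $u_r\in U_\mu$.

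Because $Y$ is a finite-dimensional space of eigenfunctions of the self-adjoint operator $D^2_A+V(x)$, the spaces $Y$ and $Y^\perp$ are orthogonal with respect to both $(\cdot,\cdot)$ and $(\cdot,\cdot)_2$ (eigenfunctions corresponding to distinct eigenvalues are $L^2$-orthogonal, and $(\varphi,v)=\lambda(\varphi,v)_2$ for each eigenfunction $\varphi$). Writing $u_{n,r}=Pu_{n,r}+(I-P)u_{n,r}$, the finite-dimensional part converges strongly, so it remains to show $(I-P)(u_{n,r}-u_r)\to 0$ in $H^1_A$. Test the asymptotic relation $\langle E'_{r,\mu}(u_{n,r},\G),v\rangle=o_n(1)\norm{v}$ with $v_n:=(I-P)(u_{n,r}-u_r)\in Y^\perp$; by the double orthogonality and weak convergence of $u_{n,r}$ to $u_r$,
\[
(u_{n,r},v_n)=\norm{v_n}^2+o_n(1),\qquad (u_{n,r},v_n)_2=\norm{v_n}_2^2+o_n(1).
\]
The nonlinear term $\operatorname{Re}\int_\K|u_{n,r}|^{p-2}u_{n,r}\,\overline{v_n}\,dx$ vanishes in the limit: strong $L^p(\K,\C)$-convergence of $u_{n,r}$ gives convergence of $|u_{n,r}|^{p-2}u_{n,r}$ in $L^{p/(p-1)}(\K,\C)$, while $v_n\to 0$ strongly in $L^p(\K,\C)$. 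Thus
\[
\norm{v_n}^2-\lambda_{n,r}\norm{v_n}_2^2=o_n(1).
\]

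Choose $\delta>0$ small as in the paragraph preceding the lemma, so that $2c_\infty/\mu+\delta<\inf\sigma_{\rm ess}(D^2_A+V(x))$ and $Y_\delta=Y$; then \eqref{eqyperp} gives $\norm{v_n}_2^2\leq(2c_\infty/\mu+\delta)^{-1}\norm{v_n}^2$. By Lemma \ref{lemlambdanr}, $\limsup_{r\to+\infty}\lambda_{\infty,r}\leq 2c_\infty/\mu$, so for all $r$ sufficiently large we have $\lambda_{n,r}\leq 2c_\infty/\mu+\delta/2$ eventually in $n$, producing a strictly positive factor
\[
\bigl(1-\lambda_{n,r}(2c_\infty/\mu+\delta)^{-1}\bigr)\norm{v_n}^2\leq o_n(1),
\]
whence $v_n\to 0$ and $u_{n,r}\to u_r$ strongly in $H^1_A(\G,\C)$. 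The remaining conclusions—$E_{r,\mu}(u_r,\G)=c_r$, $E'_{r,\mu}(u_r,\G)=0$, and the identity for $\lambda_{\infty,r}$—follow by continuity of $E_{r,\mu}$ and $E'_{r,\mu}$ on $U_\mu$, while the final asymptotic bound is exactly Lemma \ref{lemlambdanr}. The main delicate point is the calibration of $\delta$: one needs the spectral gap $2c_\infty/\mu+\delta<\inf\sigma_{\rm ess}$ and simultaneously $\lambda_{n,r}$ strictly below that threshold, which is why the condition \eqref{cmu2se} with strict inequality, and the $r$-large regime provided by Lemma \ref{lemlambdanr}, are both essential.
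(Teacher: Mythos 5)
Your proof is correct and follows essentially the same route as the paper's: boundedness of the (PS) sequence plus the blow-up of the penalization to place the weak limit in $U_\mu$, the compact embedding into $L^p(\K,\C)$ to dispose of the nonlinear term, and the decomposition $Y\oplus Y^\perp$ together with the gap inequality \eqref{eqyperp} and Lemma \ref{lemlambdanr} to upgrade weak to strong convergence. The only cosmetic difference is that you test $E'_{r,\mu}(u_{n,r},\G)$ directly against $v_n=(I-P)(u_{n,r}-u_r)$ using the double orthogonality, whereas the paper first derives the limit equation for $u_r$ and then expands $\norm{u_{n,r}-u_r}^2$; both yield the same identity $\norm{v_n}^2-\lambda_{n,r}\norm{v_n}_2^2=o_n(1)$ and the same conclusion.
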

	\begin{proof}
		Let $r>1$ be sufficiently large and take $\delta>0$ as in \eqref{eqyperp}. From Lemma \ref{lemlambdanr}, up to a subsequence, we may assume that
		\begin{equation}\label{lnd}
			0\leq\lambda_{n,r} \to \lambda_{\infty,r}
			\leq \frac{2{c_\infty}}{\mu}
			<\frac{2{c_\infty}}{\mu}+\delta 
			\text{ as }n \to +\infty.
		\end{equation}
		By Lemma \ref{lembounded}, $\{u_{n,r}\}_{n\geq 1}$ is bounded in $H^1_A(\G,\C)$, and thus, up to a subsequence, $u_{n,r} \rightharpoonup u_{r}$ in $H^1_A(\G,\C)$.\\
		Since $\{u_{n,r}\}_{n \geq 1} \subset U_\mu$, by Fatou's Lemma and using Remark \ref{recompact}, we get 
		\begin{equation}\label{Fatmu}
			\norm{u_{r}}_2^2
			\leq \liminf_{n \to +\infty}\norm{u_{n,r}}_2^2
			\leq \limsup_{n \to +\infty}\norm{u_{n,r}}_2^2
			\leq\mu.
		\end{equation}
		We claim that $\norm{u_{r}}^2_2<\mu$. Arguing by contradiction, suppose that $\norm{u_{r}}_2^2= \mu$. Then, by \eqref{Fatmu},
		\begin{equation}\label{unr2mu}
			\lim_{n \to +\infty}\norm{u_{n,r}}_2^2= \mu.
		\end{equation}
		Since $\{u_{n,r}\}_{n \geq 1}$ is bounded  (and so, by Lemma \ref{lemgnsg}, $\{E(u_{n,r},\G)\}_{n \geq 1}$ is bounded) and, by \eqref{unr2mu},  $f_r(\norm{u_{n,r}}_2^2/{\mu}) \to -\infty$, we obtain $E_{r,\mu}(u_{n,r},\G) \rightarrow -\infty$, which is absurd. 
		
		Next, we are going to prove that $u_{n,r} \to u_{r}$ in $H_A^1(\G,\C)$.
		To this end, by Remark \ref{recompact}, since the embedding $H^1_A(\G,\C)\hookrightarrow L^p(\K,\mathbb{C})$ is compact, and by Lemma \ref{lemlambdanr}, it follows that
		$$\langle E'(u_{r},\G), \cdot \rangle - \lambda_{\infty,r}(u_r,\cdot)_2=0$$
		and 
		\begin{equation*}
			\operatorname{Re}\int_{\K} (\abs{u_{n,r}}^{p-2} u_{n,r}-\abs{u_{r}}^{p-2} u_{r})\overline{(u_{n,r}-u_{r})} \, dx \to 0, \quad \text{ as }  n \to +\infty.
		\end{equation*}
		Since $\langle E_{r,\mu}'(u_{n,r}),u_{n,r}-u_r\rangle=o_n(1)$, the above information gives
		\begin{equation}\label{equnr}
			\begin{aligned}
				\norm{u_{n,r}-u_{r}}^{2}
				&
				=\langle E'(u_{n,r},\G)-E'(u_r,\G), u_{n,r}-u_r \rangle \\
				&\qquad
				+\operatorname{Re}\int_{\K} (\abs{u_{n,r}}^{p-2} u_{n,r}-\abs{u_{r}}^{p-2} u_{r})\overline{(u_{n,r}-u_{r})} \, dx\\
				&=\langle E'_{r,\mu}(u_{n,r},\G), u_{n,r}-u_r \rangle -\langle E'(u_r,\G), u_{n,r}-u_r \rangle+\lambda_{n,r}(u_{n,r},u_{n,r}-u_r)_2
				\\
				&\qquad
				+\operatorname{Re}\int_{\K} (\abs{u_{n,r}}^{p-2} u_{n,r}-\abs{u_{r}}^{p-2} u_{r})\overline{(u_{n,r}-u_{r})} \, dx\\
				&=o_n(1)
				-\lambda_{\infty,r}(u_r,u_{n,r}-u_r)_2
				+\lambda_{n,r}(u_{n,r},u_{n,r}-u_r)_2\\
				&=
				\lambda_{\infty,r}\|u_{n,r}-u_r\|_2^2
				+(\lambda_{n,r}-\lambda_{\infty,r})(u_{n,r},u_{n,r}-u_r)_2
				+o_n(1)\\
				&=
				\lambda_{\infty,r}\|u_{n,r}-u_r\|_2^2
				+o_n(1).
			\end{aligned}
		\end{equation}
		Since the dimension of $Y$ is finite, we obtain $P(u_{n,r}-u_r) \to 0$ in $H^1_A(\G,\C)$. On the other hand, from \eqref{eqyperp} and \eqref{equnr} it follows that
		$$
		\lambda_{\infty, r}\norm{(I-P)(u_{n,r}-u_{r})}^{2}_2 + o_n(1) = \norm{(I-P)(u_{n,r}-u_{r})}^{2} \geq \left(\frac{2 c_{\infty}}{\mu}+\delta\right)\norm{(I-P)(u_{n,r}-u_{r})}^{2}_2.
		$$
		Hence, by \eqref{lnd}, $(I-P)(u_{n,r}-u_{r}) \to 0$ in $H^1_A(\G,\C)$. Therefore, $u_{n,r}\to u_{r}$ in $H^1_A(\G,\C)$ and so we can conclude.
	\end{proof}
	Let $\{r_{n}\}$ be a monotone sequence with $r_{n}>1$ and $r_{n} \nearrow +\infty$. Now, we show that, under the assumption \eqref{cmu2se}, the critical points $u_{r_n}$ of $E_{r_n,\mu}(\cdot,\G)$, obtained by Lemma \ref{lemunr}, converges to a critical point of $E(\cdot,\G)$ constrained on $H_\mu(\G)$ as $n\to +\infty$.
	\begin{lemma}\label{lemur}
		If \eqref{cmu2se} holds, then there exists $u \in H^1_A(\G,\C)$ such that, up to a subsequence, $u_{r_n} \to u$ in $H^1_A(\G,\C)$, as $n \to +\infty$. Moreover, $u$ satisfies $E(u,\G)=c_\infty$, and, either $u$ is a critical point of $E(\cdot, \G)$ constrained on $H_\mu(\G)$ with Lagrange multiplier $\lambda \in [0,2c_\infty/\mu]$, or $u$ is a critical point of $E(\cdot, \G)$ constrained on $H_\nu(\G)$ for some $0<\nu < \mu$ with Lagrange multiplier $\lambda =0$.
	\end{lemma}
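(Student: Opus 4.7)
My plan is to extract from the critical points $\{u_{r_n}\}$ produced by Lemma \ref{lemunr} a strongly convergent subsequence, pass to the limit in the Euler--Lagrange identity, and then split according to whether the mass saturates at $\mu$ or settles at a smaller value. Applying the computation in the proof of Lemma \ref{lembounded} directly to $u_{r_n}$ (so that the $o_n(1)$ terms are absent) and using $c_{r_n}\le c_\infty$, one obtains $\tfrac{p-2}{2}\|u_{r_n}\|^2\le pc_\infty$, i.e. a uniform bound in $H^1_A(\G,\C)$. Up to a subsequence, $u_{r_n}\rightharpoonup u$ in $H^1_A(\G,\C)$, and by Lemma \ref{lemlambdanr}, up to a further subsequence, $\lambda_{\infty,r_n}\to\lambda$ for some $\lambda\in[0,2c_\infty/\mu]$. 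Testing $E'_{r_n,\mu}(u_{r_n},\G)=0$ against $v\in H^1_A(\G,\C)$, the sesquilinear part $(u_{r_n},v)$ and the mass term $\lambda_{\infty,r_n}(u_{r_n},v)_2$ pass to the limit by weak convergence, while $\operatorname{Re}\int_{\K}|u_{r_n}|^{p-2}u_{r_n}\bar v\,dx\to\operatorname{Re}\int_{\K}|u|^{p-2}u\bar v\,dx$ by the compact embedding $H^1_A(\G,\C)\hookrightarrow L^p(\K,\C)$ from Remark \ref{recompact} ($\K$ being bounded). Hence $\langle E'(u,\G),v\rangle=\lambda(u,v)_2$ for every $v\in H^1_A(\G,\C)$.

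The main obstacle, upgrading weak convergence to strong convergence while the $r_n$-dependent penalization is still present, is handled as in Lemma \ref{lemunr}. Choosing $v=u_{r_n}-u$ in the two Euler--Lagrange identities and subtracting, the nonlinear cross-term vanishes by the same compactness into $L^p(\K,\C)$ and the term $(u,u_{r_n}-u)_2$ vanishes by weak $L^2$-convergence, so
\begin{equation*}
\|u_{r_n}-u\|^2 \;=\; \lambda_{\infty,r_n}\|u_{r_n}-u\|_2^2 + o(1).
\end{equation*}
Splitting $u_{r_n}-u=P(u_{r_n}-u)+(I-P)(u_{r_n}-u)$ along the spectral decomposition $Y\oplus Y^\perp$, which is orthogonal in both $(\cdot,\cdot)$ and $(\cdot,\cdot)_2$, the finite-dimensional projection $P(u_{r_n}-u)$ tends to $0$ in norm; for the orthogonal complement, inequality \eqref{eqyperp} combined with $\lambda_{\infty,r_n}\le 2c_\infty/\mu+o(1)<2c_\infty/\mu+\delta$ forces $(I-P)(u_{r_n}-u)\to 0$ as well. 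Therefore $u_{r_n}\to u$ strongly in $H^1_A(\G,\C)$.

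It remains to identify $E(u,\G)$ and the Lagrange multiplier. Strong convergence gives $E(u_{r_n},\G)\to E(u,\G)$, and the inequality $f_r(s)<(s/r)f'_r(s)$ from \eqref{eqfr's} together with the boundedness of $\lambda_{\infty,r_n}$ yields
\begin{equation*}
f_{r_n}\!\left(\frac{\|u_{r_n}\|_2^2}{\mu}\right) \;\le\; \frac{\|u_{r_n}\|_2^2}{2r_n}\,\lambda_{\infty,r_n} \;\to\; 0,
\end{equation*}
so $E(u,\G)=\lim_n\bigl(E_{r_n,\mu}(u_{r_n},\G)+f_{r_n}(\|u_{r_n}\|_2^2/\mu)\bigr)=c_\infty>0$; in particular $u\not\equiv 0$ and $\nu:=\|u\|_2^2\in(0,\mu]$. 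If $\nu=\mu$, then $u\in H_\mu(\G)$ is a constrained critical point of $E(\cdot,\G)$ with Lagrange multiplier $\lambda\in[0,2c_\infty/\mu]$, which is the first alternative. If $\nu<\mu$, then $\|u_{r_n}\|_2^2/\mu$ is eventually bounded away from $1$, and since $f'_r\to 0$ pointwise on $[0,1)$ as $r\to+\infty$, $\lambda_{\infty,r_n}\to 0$ and hence $\lambda=0$; thus $u$ is a critical point of $E(\cdot,\G)$ constrained on $H_\nu(\G)$ with Lagrange multiplier $0$, which is the second alternative.
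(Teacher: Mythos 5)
Your proof is correct and follows essentially the same route as the paper's: the paper compresses the boundedness, weak-limit, and spectral-decomposition steps into the phrase ``by repeating the arguments of Lemmas \ref{lembounded} and \ref{lemunr},'' and you simply carry those repetitions out explicitly. The remaining ingredients --- the vanishing of $f_{r_n}(\norm{u_{r_n}}_2^2/\mu)$ via $f_r(s)<(s/r)f_r'(s)$, the identification $E(u,\G)=c_\infty>0$, and the dichotomy with $\lambda=0$ when $\norm{u}_2^2<\mu$ obtained from the monotonicity of $f_r'$ and its pointwise decay on $[0,1)$ --- coincide with the paper's argument.
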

	\begin{proof}
		
		By repeating the arguments of Lemmas \ref{lembounded} and \ref{lemunr}, we may assume that, up to a subsequence, $u_{r_n} \to u$ in $H^1_A(\G,\C)$ and there exists a $\lambda\in[0,2c_\infty/\mu]$ such that $\lambda_{\infty,r_n} \rightarrow \lambda$ as $n \to +\infty$.
		
		We claim that
		\begin{equation}\label{frn}
			f_{r_n}\left(\frac{\norm{u_{r_n}}_2^2}{\mu}\right) \to 0
			\text{ as }
			n \to +\infty.
		\end{equation}
		Indeed, if $\norm{u}_2 = 0$, then \eqref{frn} follows by definition; if $0<\norm{u}_2 \leq \mu$, it is an immediate consequence of \eqref{eqfr's}, since
		$$
		\frac{2}{\mu}f'_{r_n}\left(\frac{\norm{u_{r_n}}_2^2}{\mu}\right) =\lambda_{\infty,r_n}  \text{ and } \limsup_{n \to +\infty}\lambda_{\infty,r_n} \leq \frac{2c_\infty}{\mu}.
		$$
		Hence, since $E_{r_{n}, \mu}(u_{r_{n}},\G)=c_{r_n}$ and $ E_{r_{n}, \mu}^{\prime}(u_{r_{n}},\G)=0$,
		$$
		E(u,\G) = c_\infty,\quad\langle  E^{\prime}(u,\G), \cdot\rangle-\lambda(u,\cdot)_2 = 0,\quad\norm{u}^2_{2} \leq \mu,\quad0 \leq \lambda \leq \frac{2c_\infty}{\mu}.$$
		Since $c_\infty>0$, then $u$ is nontrivial, so that either $\norm{u}^2_{2}=\mu$ or $\norm{u}_{2}^2<\mu$. In  the latter case,
		$$
		0 
		\leq \lambda
		=\lim_{n\to +\infty}\lambda_{\infty,r_n}
		=\lim _{n \to +\infty}\frac{2}{\mu}f'_{r_n}\left(\frac{\norm{u_{r_n}}_2^2}{\mu}\right) \leq \limsup _{n \to +\infty} \frac{2}{\mu}f_{r_{n}}^{\prime}\left(\frac{\mu+\norm{u}_{2}^{2}}{2\mu}\right)=0,
		$$
		which implies $\lambda=0$.
	\end{proof}
	\begin{remark}\label{remarkD-s}
		Since, for $s \in [0,\lambda_1)$,
		$$
		\norm{u}^2-s \norm{u}_2^2
		=
		\left(1-\frac{s}{\lambda_1}\right)\norm{u}^2 + \frac{s}{\lambda_1}\norm{u}^2-s \norm{u}_2^2
		\geq \left(1-\frac{s}{\lambda_1}\right)\norm{u}^2,
		$$
		for all $s \in (-\infty,\lambda_1)$, on $H^1_A(\G,\C)$ we can define the equivalent norm 
		$$
		\left(\int_\G \abs{D_Au}^2+  \left(V(x) -s\right)\abs{u}^2\,dx\right)^\frac{1}{2}.
		$$ 
		Thus, Lemmas
		\ref{lembounded}--\ref{lemur} apply to the operator $D^2_A + \left(V(x)-s)\right)$ for all $s \in (-\infty,\lambda_1)$, and this fact plays an important role in the proof of Theorem \ref{th2}.
	\end{remark}

	We conclude this section defining the functional $J_{r,\mu}(\cdot,\G): H^1_A(\G,\C) \to \mathbb{R}$ by
	$$J_{r,\mu}(u,\G) :=
	\begin{cases}
		\beta(E_{r,\mu}(u,\G)) &\text{if } u\in U_\mu,\\
		-1  & \text{otherwise,}
	\end{cases}
	$$
	where $\beta \in C^\infty(\mathbb{R},\mathbb{R})$ satisfy $\beta = -1$ on $(-\infty,-1]$, $\beta (t)=t$ for $t\in [0,+\infty)$, $\beta (t)\leq 0$, for $t\in (-1,0)$.
	
	Observe that, by the definition of $f_r$ we have that for every $u\in \partial U_\mu$ there exists $\varepsilon = \varepsilon(u) > 0$ such that $E_{r,\mu}(v,\G)\leq -1$ for every $v \in U_\mu \cap B_\varepsilon (u)$.
	
	Then, arguing as in \cite[Lemma 7.1]{Alves}, 
	$J_{r,\mu}( \cdot,\G)\in C^1(H^1_A(\G,\C),\R)$ and, if $u$ is a critical point of $J_{r,\mu}( \cdot,\G)$ with $J_{r,\mu}(u,\G) \geq 0$, then $u\in U_\mu$ is also a critical point of $E_{r,\mu}(\cdot,\G)$ at the same energy level. The same conclusion holds for (PS) sequences. Therefore, instead of $E_{r,\mu}(\cdot, \G)$, we can look for positive min-max levels of $J_{r,\mu}(\cdot, \G)$, when it is more convenient.
	\section{Proofs of Theorems \ref{th2} and \ref{th3}}\label{secp23}

	In this section, we prove  Theorems \ref{th2} and \ref{th3}. Let $p>2$, $r>1$, and $\mu>0$.
	The following lemma shows that the functional $E_{r, \mu}(\cdot,\G)$ possesses the mountain pass geometry in $U_\mu$.
	
	\begin{lemma}\label{lemmpg}
		The functional $E_{r, \mu}(\cdot,\G)$ possesses the following properties:
		\begin{enumerate}[label=\rm(\roman*),ref=\roman*]
			\item There exist $\alpha, \rho>0$ such that $E_{r, \mu}(u,\G) \geq \alpha$ for $u \in  U_{\mu}$ with $\|u\|=\rho$;\label{lemmpg1}
			\item There exists  $e \in U_{\mu}$ with $\|e\|>\rho$ such that $E_{r, \mu}(e,\G) <0$.\label{lemmpg2}
		\end{enumerate}
	\end{lemma}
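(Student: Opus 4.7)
The plan is the standard mountain pass geometry check, with the penalty $H_{r,\mu}$ playing the role usually played by a nonlinear term with super-quadratic growth: its divergence as $\|u\|_2^2 \to \mu^-$ provides the descending direction needed for (ii), while it is harmless near $0$ for (i).

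For part \eqref{lemmpg1}, I would estimate each negative contribution to $E_{r,\mu}(u,\G)$ by a power of $\|u\|$ strictly greater than $2$, so that the quadratic term $\tfrac12\|u\|^2$ dominates for small $\|u\|$. First, since $V \geq 1$ pointwise, $\|u\|_2 \leq \|u\|$; combined with Lemma \ref{lemgnsg} (both the compact and noncompact versions give the same bound once $\|D_Au\|_2 \leq \|u\|$ is used), one gets
$$
\Psi(u,\G)\;\leq\;\tfrac{1}{p}\|u\|_p^p\;\leq\;\tfrac{C_{p,\G}}{p}\,\|u\|^{p/2-1}\|u\|_2^{p/2+1}\;\leq\;\tfrac{C_{p,\G}}{p}\,\|u\|^p.
$$
Second, as long as $\|u\|^2 \leq \mu/2$ one has $\|u\|_2^2/\mu \leq 1/2$ and therefore
$$
H_{r,\mu}(u,\G)\;=\;\frac{(\|u\|_2^2/\mu)^r}{1-\|u\|_2^2/\mu}\;\leq\;2\left(\frac{\|u\|^2}{\mu}\right)^{\!r}.
$$
Both corrections are $o(\|u\|^2)$ as $\|u\|\to 0^+$ (using $p>2$ and $r>1$), so taking $\rho>0$ small enough (with $\rho^2 \leq \mu/2$) yields $E_{r,\mu}(u,\G)\geq \alpha:=\rho^2/4>0$ on the sphere $\{u\in U_\mu:\|u\|=\rho\}$.

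For part \eqref{lemmpg2}, the idea is to exploit the singularity of $f_r$ at $s=1$. Fix any $u_0\in H^1_A(\G,\C)\setminus\{0\}$ with $0<\|u_0\|_2^2<\mu$, and look at the ray $t\mapsto tu_0$ for $t\in[0,t_{\max})$, where $t_{\max}:=\sqrt{\mu/\|u_0\|_2^2}$. Along this ray $tu_0\in U_\mu$ and
$$
E_{r,\mu}(tu_0,\G)\;=\;\tfrac{t^2}{2}\|u_0\|^2-t^p\,\Psi(u_0,\G)-f_r\!\left(\tfrac{t^2\|u_0\|_2^2}{\mu}\right),
$$
where the first two terms stay bounded while the last tends to $+\infty$ as $t\to t_{\max}^-$. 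Hence there exists $t_0\in(0,t_{\max})$ with $E_{r,\mu}(t_0 u_0,\G)<0$. To ensure $\|t_0u_0\|>\rho$ simultaneously, I would first fix $u_0$ (for instance, normalize so that $\|u_0\|_2^2=\mu/2$, which gives $\|u_0\|^2\geq \lambda_1\mu/2$) and then further shrink $\rho$ from part \eqref{lemmpg1}, requiring in addition $\rho<\|u_0\|$; pushing $t_0$ close enough to $t_{\max}$ makes $\|t_0u_0\|>\|u_0\|>\rho$ automatic. Setting $e:=t_0u_0$ finishes the proof.

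There is no real obstacle: the only coordination needed is that the $\rho$ chosen in (i) also satisfy $\rho<\|u_0\|$ for the $u_0$ used in (ii), which is achieved by shrinking $\rho$ at the end.
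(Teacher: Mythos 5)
Your argument is correct and follows essentially the same route as the paper: part (\ref{lemmpg1}) is the standard small-sphere estimate in which $\tfrac12\|u\|^2$ dominates both $\Psi$ (via Lemma \ref{lemgnsg}) and the penalty term, which vanishes faster than quadratically at the origin, and part (\ref{lemmpg2}) exploits the blow-up of $f_r$ as $\|tu_0\|_2^2\to\mu^-$ along a ray. The only cosmetic differences are that the paper normalizes $\|u_0\|_2^2=\mu$ and lets $t\to 1^-$ (securing $\|t_0u_0\|>\rho$ via $\|t_0u_0\|^2\geq t_0^2\lambda_1\mu$ rather than by shrinking $\rho$ afterwards), and uses $\|u\|_2^2\leq\|u\|^2/\lambda_1$ where you use $V\geq 1$.
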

	\begin{proof}
		(\ref{lemmpg1}) For any $u \in H^1_A(\G,\C)$ with $\|u\|=\rho<\sqrt{\lambda_1\mu}$,  we have
		$$
		\norm{u}_{2}^2 \leq \frac{1}{\lambda_1}\|u\|^{2}=\frac{\rho^{2}}{\lambda_1} <\mu.
		$$
		Thus, by the monotonicity of $f_{r}$ (see \eqref{eqfr's}) and Lemma \ref{lemgnsg}, we obtain
		$$
		E_{r, \mu}(u,\G) 
		\geq \frac{1}{2}\|u\|^{2}-\Psi(u,\G)-f_{r}\left(\frac{\rho^{2}}{\mu\lambda_1}\right)
		\geq
		\rho^{2}
		\left[\frac{1}{2}-C \rho^{p-2}-\frac{1}{\lambda_1\mu-\rho^2}\left(\frac{\rho^2}{\mu\lambda_1}\right)^{r-1}\right].
		$$
		Since $p>2$ and $r>1$, we can conclude.
		
		(\ref{lemmpg2}) Choosing $u_{0} \in H^1_A(\G,\C)$ with $\norm{u_0}^2_2=\mu$, it is easy to verify that
		$$
		\lim _{t \rightarrow 1^{-}} E_{r, \mu}(tu_0,\G)=-\infty.
		$$
		Therefore, since $\rho^2<\lambda_1\mu$, it is enough to take $t_0\in(\rho^2/(\lambda_1\mu),1)$ sufficiently close to $1$ to get $ E_{r, \mu}(t_0u_0,\G)<0$.
	\end{proof}
	Thus we can define the minimax value
	$$
	c_{r}:=\inf _{\gamma \in \Gamma_{r, \mu}} \max _{t \in[0,1]} E_{r, \mu}(\gamma(t),\G)>0,
	$$
	where
	$$
	\Gamma_{r, \mu} := \{\gamma \in C([0,1], U_{\mu}): \gamma(0)=0,\,  E_{r, \mu}(\gamma(1),\G)<0\}.
	$$
	For any $r>1$, the mountain pass geometry allows us to find a (PS) sequence $\{u_{n,r}\}_{n\geq1}$ at the level $c_r$.
	
	Moreover, if $r_{1} \leq r_{2}$, we have $c_{r_{1}} \leq c_{r_{2}}$ because $E_{r_1, \mu}(\cdot,\G) \leq E_{r_2, \mu}(\cdot,\G)$ on $U_{\mu}$. Then we can define $c_{\infty}:=\sup\limits_{r>1} c_{r}=\lim\limits_{r\to + \infty} c_{r}$. 
	
	Note that, for any $r >1$ and any $u \in H^1_A(\G,\C) \backslash\{0\}$ with $\norm{u}_2^2 = \mu$, we have 
	$$
	c_r
	\leq \sup _{t \in[0,1)} E_{r, \mu}(tu,\G)
	\leq \sup_{0\leq t\leq 1} E(tu,\G).
	$$
	Then, for all $u \in H^1_A(\G,\C) \backslash\{0\}$ with $\norm{u}_2^2 = \mu$,  we have
	\begin{equation*}
		c_{\infty} \leq \sup _{0 \leq t \leq 1} E(t u,\G).
	\end{equation*}
	Now, we provide an important upper bound estimate for $c_\infty$.
	\begin{lemma}\label{lemcin}
		If the condition \eqref{AssG} holds, then
		$$c_{\infty} < \frac{\mu\lambda_1}{2} \leq \frac{\mu}{2} \inf\sigma_{\rm ess}(D^2_A + V(x)).$$
		Instead,  if the condition \eqref{AssGm} holds, then $$c_{\infty} \leq \frac{\mu\lambda_1}{2} < \frac{\mu}{2} \inf\sigma_{\rm ess}(D^2_A + V(x)).$$
	\end{lemma}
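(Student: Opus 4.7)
The plan is to exploit the upper bound
$$c_\infty \leq \sup_{0 \leq t \leq 1} E(tu, \G),$$
already established just above the lemma statement, by choosing as test function $u$ a normalized eigenfunction for $\lambda_1$. Observe first that $\lambda_1$ is actually attained under either assumption: under \eqref{AssG} by hypothesis, and under \eqref{AssGm} by Theorem \ref{thminmiax}, since the strict inequality $\lambda_1 < \inf\sigma_{\rm ess}(D^2_A+V(x))$ forces $\lambda_1$ to be a genuine eigenvalue. I would denote by $\varphi_1$ any such minimizer and set $u := \sqrt{\mu/\norm{\varphi_1}_2^2}\,\varphi_1$, so that $\norm{u}_2^2 = \mu$ and, since the Rayleigh quotient is minimized, $\norm{u}^2 = \mu\lambda_1$.

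Plugging this $u$ into the bound yields
$$E(tu,\G) \,=\, \frac{t^2}{2}\mu\lambda_1 - \frac{t^p}{p}\int_{\G\text{ or }\K}\abs{u}^p\,dx \,\leq\, \frac{t^2}{2}\mu\lambda_1 \,\leq\, \frac{\mu\lambda_1}{2}, \qquad t \in [0,1],$$
the domain of integration being $\G$ in the compact case and $\K$ in the noncompact case. This already delivers $c_\infty \leq \mu\lambda_1/2$, which is exactly the content claimed under \eqref{AssGm}; the companion bound $\mu\lambda_1/2 < (\mu/2)\inf\sigma_{\rm ess}(D^2_A+V(x))$ is then built directly into the definition of \eqref{AssGm}, so this case is finished.

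Under \eqref{AssG} one needs to upgrade the second inequality above to a strict one. For this I would use the assumption $\varphi_1|_{\K}\not\equiv 0$ (which is automatic when $\G$ is compact, since then $\K=\G$ and $\varphi_1 \not\equiv 0$) to obtain $\int_{\G\text{ or }\K}\abs{u}^p\,dx > 0$. The map $g(t) := (t^2/2)\mu\lambda_1 - (t^p/p)\int_{\G\text{ or }\K}\abs{u}^p\,dx$ then attains its maximum on $[0,1]$ strictly below $\mu\lambda_1/2$: a one-variable check splitting according to whether the unconstrained maximizer $t_\ast := \bigl(\mu\lambda_1/\int_{\G\text{ or }\K}\abs{u}^p\,dx\bigr)^{1/(p-2)}$ lies inside $[0,1]$ or to its right closes the estimate. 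The remaining inequality $\lambda_1 \leq \inf\sigma_{\rm ess}(D^2_A+V(x))$ comes for free from the min--max characterization in Theorem \ref{thminmiax}.

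The only delicate step is the strict bound in the \eqref{AssG} case, but once the explicit shape of $E(tu,\G)$ is written down it reduces to a transparent one-variable optimization, with the nonvanishing of $\int_\K\abs{u}^p\,dx$ supplied precisely by the extra hypothesis $\varphi_1|_\K\not\equiv 0$ in \eqref{AssG}. I therefore expect no real obstacle: the proof is essentially a direct evaluation of $E$ on a single eigenfunction test, together with the elementary inequality $\lambda_1 \leq \inf\sigma_{\rm ess}$ from spectral theory.
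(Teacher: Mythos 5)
Your proposal is correct and follows essentially the same route as the paper: both evaluate $E(t\varphi_1,\G)$ along the mass-normalized first eigenfunction, reduce to the one-variable function $g(t)=\tfrac{t^2}{2}\mu\lambda_1-\tfrac{t^p}{p}\int\abs{\varphi_1}^p\,dx$ on $[0,1]$, obtain $c_\infty\leq\mu\lambda_1/2$ directly, and under \eqref{AssG} upgrade to a strict inequality by splitting on whether the unconstrained maximizer $t_\ast$ lies in $[0,1]$, using $\varphi_1|_\K\not\equiv 0$ to ensure the nonlinear term is positive. The only cosmetic difference is that you make explicit the attainment of $\lambda_1$ under \eqref{AssGm} via the min--max theorem, which the paper leaves implicit.
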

	\begin{proof}
		Let $\varphi_1$ be the eigenvector corresponding to the first eigenvalue $\lambda_1$ and satisfy $\norm{\varphi_1}^2_2=\mu$. Then, for $t \geq  0$,
		$$
		E(t\varphi_1,\G) = \frac{t^2\mu\lambda_1}{2} - t^p\Psi(\varphi_1,\G).
		$$
		If the condition \eqref{AssG} holds, then $\Psi(\varphi_1,\G) > 0$. Define $g(t) =  E(t\varphi,\G)$. We have $g'(t) = t\mu \lambda_1 - pt^{p-1}\Psi(\varphi_1,\G)$.\\
		Set
		$$
		t_0 := \left(\frac{\mu \lambda_1}{p\Psi(\varphi_1,\G)}\right)^\frac{1}{p-2}.
		$$
		If $t_0 \geq 1$, then $g(t)$ is increasing for $0 \leq t \leq 1$, and thus
		$$
		c_\infty \leq  \sup _{0 \leq t\leq 1} E(t \varphi_1,\G) \leq E(\varphi_1,\G) = \frac{\mu\lambda_1}{2} - \Psi(\varphi_1,\G) < \frac{\mu\lambda_1}{2}.
		$$
		If $0<t_0<1$, then $t_0$ is the unique maximum point of  $g(t)$ in $0 \leq t \leq 1$, and thus
		$$
		c_\infty
		\leq  \sup _{0 \leq t \leq 1} E(t \varphi_1,\G) 
		= E(t_0 \varphi_1,\G) = \frac{t_0^2\mu\lambda_1}{2} - t_0^p\Psi(\varphi_1,\G) < \frac{\mu\lambda_1}{2}.
		$$
		Instead, if the condition \eqref{AssGm} holds, then,
		$$
		c_\infty \leq  \sup _{0 \leq t\leq 1} E(t \varphi_1,\G)= \sup _{0 \leq t<1}\left(\frac{t^2\mu\lambda_1}{2} - t^p\Psi(\varphi_1,\G)\right) \leq \frac{\mu\lambda_1}{2}.
		$$
	\end{proof}
	
	\begin{remark}\label{liststh2}
		By repeating the arguments of Lemma \ref{lemmpg} for $D^2_A + (V(x)-s)$ with $s \in (-\infty, \lambda_1)$, for every $r>1$ we can define the minimax value
		$$
		c_{r,s}:=\inf _{\gamma \in \Gamma_{r, \mu, s}} \max _{t \in[0,1]} E_{r, \mu,s}(\gamma(t),\G)>0,
		$$
		where, for every $u \in U_\mu$
		$$
		E_{r, \mu,s}(u,\G):=E_{r,\mu}(u,\G)-\frac{s}{2}\norm{u}_2^2
		$$
		and
		$$
		\Gamma_{r, \mu,s} := \{\gamma \in C([0,1], U_{\mu}): \gamma(0)=0,\,  E_{r, \mu,s}(\gamma(1),\G)<0\}.
		$$
		For every $r>1$, the mountain pass geometry allows us to find a (PS) sequence $\{u_{n,r,s}\}_{n\geq1}$ of $E_{r, \mu,s}$ at the level $c_{r,s}>0$.
		
		Then, for all $u \in H^1_A(\G,\C) \backslash\{0\}$ with $\norm{u}_2^2 = \mu$,  we have
		that $r>1\mapsto c_{r,s}$ is non decreasing and \begin{equation*}
			c_{\infty,s}:=\sup_{r>1}c_{r,s} \leq \sup _{0 \leq t \leq 1} \left(E(t u,\G)-\frac{s}{2}\norm{tu}_2^2\right).
		\end{equation*}
		Then, by repeating the arguments of Lemma \ref{lemcin}, we obtain that, 
		if the condition \eqref{AssG} holds, then
		$$c_{\infty,s} < \frac{\mu\lambda_1(D^2_A + V(x)-s)}{2}=\frac{\mu(\lambda_1-s)}{2} \leq \inf\sigma_{\rm ess}(D^2_A + V(x))-s=\frac{\mu}{2} \inf\sigma_{\rm ess}(D^2_A + V(x)-s),$$
		and,  if the condition \eqref{AssGm} holds, then $$c_{\infty,s} \leq \frac{\mu\lambda_1(D^2_A + V(x)-s)}{2}=\frac{\mu(\lambda_1-s)}{2} < \inf\sigma_{\rm ess}(D^2_A + V(x))-s= \frac{\mu}{2} \inf\sigma_{\rm ess}(D^2_A + V(x)-s),$$
		where
		$$\lambda_1(D^2_A + V(x)-s):=\inf_{u \in H^1_A(\G,\C)\backslash\{0\}} \frac{\norm{u}^2-s\norm{u}_2^2}{\norm{u}_2^2}.
		$$
	\end{remark}
	
	Thus, we are ready to prove Theorem \ref{th2}. 
	\begin{proof}[Proof of Theorem \ref{th2}]
		Assume that either condition \eqref{AssG} or \eqref{AssGm} holds.  Then,   by Remarks \ref{liststh2} and \ref{remarkD-s} one of the following two alternatives occurs:
		\begin{enumerate}[label=\rm(\roman*),ref=\roman*]
			\item \label{pfrh21} either there exists  $u \in H^1_A(\G,\C)$ such that $u$ is a critical point of $E(\cdot, \G)-s\norm{\cdot}^2_2/2$ constrained on $H_\mu(\G)$ with a Lagrange multiplier $\lambda_s \in [0,\lambda_1-s]$ (or $\lambda \in [0, \lambda_1-s)$ if \eqref{AssG} holds),
			\item  \label{pfrh22} or there exists a $u \in H^1_A(\G,\C)$ such that $u$ is a critical point of $E(\cdot, \G)-s\norm{\cdot}^2_2/2$ constrained on $H_\nu(\G)$ for some $0<\nu < \mu$ with Lagrange multiplier $\lambda_s =0$.
		\end{enumerate}
		Note that $s+\lambda_{s} \in[s,\lambda_1]$ (or $s+\lambda_{s} \in [s, \lambda_1)$ if \eqref{AssG} holds). \\
		Thus, if there exists no critical point $u \in H_A^1(\G,\C)$ of $E(\cdot, \G)$ constrained on $H_\mu(\G)$   with Lagrange multiplier $\lambda \in (-\infty,\lambda_1]$ ($\lambda \in (-\infty, \lambda_1)$ if \eqref{AssG} holds),
		then, for all $s \in (-\infty,\lambda_1)$, case (\ref{pfrh22}) occurs, i.e., for all $s \in (-\infty,\lambda_1)$, $E(\cdot, \G)$ has a critical point $u \in H^1_A(\G,\C)$ constrained on $H_\nu(\G)$ for some $0<\nu < \mu$ and the corresponding Lagrange multiplier is $s$.
		
		On the other hand, if there exists $s_0\in (-\infty,\lambda_1)$ such that $E(\cdot, \G)$ has no critical point $u \in H^1_A(\G,\C)$ constrained on $H_\nu(\G)$ for some $0<\nu < \mu$ with Lagrange multiplier $s_0$. Then, for $s=s_0$, case (\ref{pfrh21}) occurs, i.e., there exists a $u \in H^1_A(\G,\C)$ such that $u$ is a critical point of $E(\cdot, \G)$ constrained on $H_\mu(\G)$ with Lagrange multiplier $\lambda_{s_0} + s_0 \in [0,\lambda_1-s]$ ($\lambda_{s_0} + s_0 \in [0, \lambda_1-s)$ if \eqref{AssG} holds).
		This completes the proof of Theorem \ref{th2}.
	\end{proof}
	
	Moreover, using Lemma \ref{lemnonex}, we can provide the proof of Theorem \ref{th3}.
	\begin{proof}[Proof of Theorem \ref{th3}]
		Assume that \eqref{AssG} or \eqref{AssGm} are satisfied. By Lemma \ref{lemur} and Lemma \ref{lemcin}, we know that there exists $u \in H^1_A(\G,\C)$ satisfying $E(u,\G)=c_\infty\leq \mu\lambda_1/2$, and one of the following two alternatives occurs:
		\begin{enumerate}[label=\rm(\roman*),ref=\roman*]
			\item \label{pfth31} either $u$ is a critical point of $E(\cdot, \G)$ constrained on $H_\mu(\G)$ with Lagrange multiplier $\lambda \in [0,\lambda_1]$ ($\lambda \in [0, \lambda_1)$ if \eqref{AssG} holds) 
			\item \label{pfth32} or $u$ is a critical point of $E(\cdot, \G)$ constrained on $H_\nu(\G)$ for some $0<\nu < \mu$ with Lagrange multiplier $\lambda =0$. 
		\end{enumerate}
		Applying  Lemma \ref{lemnonex} for $c=\lambda_1/2$, we conclude that, for any $0<\mu\leq\mu_0:=\mu_{\lambda_1/2,p}$, case (\ref{pfth32}) can not occur.
		
		Moreover, if $2<p<6$, by Lemma \ref{lemnonex}, since $\mu_{\lambda, p}^*\to +\infty$ as $\lambda \to -\infty$, we have that for any $\mu >0$ and for any $\lambda<0$ and $\nu>0$ such that $0<\nu<\mu\leq \mu_{\lambda, p}^*$, there exists no $u \in H_\nu(\G)$ satisfying $\langle E'(u,\G),\cdot \rangle-\lambda(\cdot,u)_2 = 0$. Thus, case (\ref{th2case2}) of Theorem \ref{th2} can not occur and so we complete.
	\end{proof}

	\section{Proof of Theorem \ref{th4}}\label{secp4}
	In this section we assume that \eqref{AssGm} holds. Our goal is to prove Theorem \ref{th4}.
	
	Assume that $m\geq 2$ (the case $m=1$ is included in Theorem \ref{th3}).
	
	In the assumption \eqref{AssGm} we assume that for every $j=1,\ldots,m$, $\lambda_j<\inf\sigma_{\rm ess}(D^2_A + V(x))$. Here we consider $j=2,\ldots,m$  ($j\geq 2$ if $m=+\infty$)\footnote{For short, from now on we will write only $2 \leq j \leq m$.} since the case $j=1$ is included in Theorem \ref{th3}.
	
	For any $2 \leq j \leq m$, let $\varphi_j$ be a eigenfunction corresponding to the eigenvalue $\lambda_j$ and, if $r>1$, define
	$$
	Y_j:=\operatorname{span}\{u \in H^1_A(\G,\C):(D^2_A + V(x)) u=\lambda u \text{ for some }\lambda \leq \lambda_j\},
	\qquad
	Z_j:=\overline{\bigoplus_{l=j}^{+\infty}X_{j,l}},
	$$
	$$
	B_{r,j}:=\{u\in Y_j  : \norm{u}\leq \rho_{r,j}\}, \quad N_{r,j}:=\{u\in Z_j : \norm{u}= \xi_{r,j}\},
	$$
	where 
	$X_{j,j}:=\operatorname{span}\{\varphi_j\}$, $X_{j,l}:=\operatorname{span}\{\varphi_{j,l}\}$ for $l\geq j+1$, $\{\varphi_{j,l}\}_{l\geq j+1}$ is an orthogonal
	basis of $Y_j^\perp$, and $\rho_{r,j}>\xi_{r,j}>0$.
	
	Note that, by \cite[Lemma 7.1]{Alves}, we know that, if $\{u_{n,r,j}\}_{n \geq 1}\subset H^1_A(\G,\C)$ is a (PS) sequence of $J_{r,\mu}( \cdot,\G)$ (defined in Section \ref{seccompact}) at level $c_{r,j} >0$, then $\{u_{n,r,j}\}_{n \geq 1} \subset U_\mu$ and it is also a (PS) sequence of $E_{r,\mu}(\cdot,\G)$ at the same energy level. Thus, using \cite[Theorem 3.5]{W} for $J_{r,\mu}(\cdot,\G)$, we have the following result.
	
	\begin{lemma}\label{lemth3.5}
		For $r>1$ and $2 \leq j  \leq m$, let
		\[
		c_{r,j} := \inf_{\gamma \in \Gamma_{r,j}} \max_{u \in B_{r,j}} J_{r,\mu}(\gamma(u),\G),
		\]
		where $\Gamma_{r,j} := \{\gamma \in C(B_{r,j}, H^1_A(\G,\C)) : -\gamma(u)=\gamma(-u) \text{ for all } u \in B_{r,j}\text{ and } \gamma\left.\right|_{\partial B_{r,j}} = \operatorname{id}\}$.\\
		If
		$$
		b_{r,j} := \inf_{\begin{subarray}{c} u \in Z_j \\ \norm{u}= \xi_{r,j}\end{subarray}} J_{r,\mu}(u,\G) > 0 > a_{r,j} := \max_{\begin{subarray}{c} u \in Y_j \\ \norm{u} = \rho_{r,j} \end{subarray}} J_{r,\mu}(u,\G),
		$$
		then \( c_{r,j} \geq b_{r,j} >0 \), and, for $E_{r, \mu}(\cdot,\G)$, there exists a (PS) sequence $\{u_{n,r,j}\}_{n\geq1}$ satisfying
		\begin{equation*}
			E_{r, \mu}(u_{n,r,j},\G) \rightarrow c_{r,j}, \quad E_{r, \mu^{\prime}}(u_{n,r,j},\G) \rightarrow 0\quad \text{as }n \to +\infty.
		\end{equation*}
	\end{lemma}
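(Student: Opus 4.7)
The plan is to invoke the abstract symmetric linking theorem [W, Theorem 3.5] applied to the globally defined functional $J_{r,\mu}(\cdot,\G)$, and then transfer the resulting (PS) sequence back to $E_{r,\mu}(\cdot,\G)$ via the mechanism already recorded just before the lemma statement (i.e.\ [Alves, Lemma 7.1]). As a preliminary step, I would verify that $J_{r,\mu}(\cdot,\G)$ is an even $C^1$ functional on the whole of $H^1_A(\G,\C)$: evenness is inherited from the fact that $\norm{\cdot}^2$, $\Psi(\cdot,\G)$, and $H_{r,\mu}(\cdot,\G)$ depend only on moduli or on $\norm{u}_2^2$, while the $C^1$ regularity across $\partial U_\mu$ is ensured by the smooth patching through $\beta$ together with the fact that $E_{r,\mu}(v,\G)\leq -1$ for $v$ in a neighborhood of $\partial U_\mu$ inside $U_\mu$, as noted at the end of Section \ref{seccompact}.

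Next, I would record the linking geometry associated with the orthogonal decomposition $H^1_A(\G,\C)=Y_j\oplus Y_j^\perp$. Under \eqref{AssGm} the first $j$ eigenfunctions $\varphi_1,\ldots,\varphi_j$ span a finite-dimensional subspace $Y_j$ of dimension exactly $j$, and correspondingly $Z_j=\operatorname{span}\{\varphi_j\}\oplus Y_j^\perp$ has codimension $j-1$ in $H^1_A(\G,\C)$. Consequently $B_{r,j}$ is a symmetric closed ball in the finite-dimensional space $Y_j$ and $N_{r,j}$ is a sphere in $Z_j$; the two sets link in the sense of Benci--Rabinowitz. The assumed geometric conditions $b_{r,j}>0>a_{r,j}$ are precisely the hypotheses of [W, Theorem 3.5] written in the present notation.

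The crucial point is the intersection property $\gamma(B_{r,j})\cap N_{r,j}\neq\emptyset$ for every odd continuous extension $\gamma\in\Gamma_{r,j}$ of the identity on $\partial B_{r,j}$. This is exactly the Borsuk--Ulam/genus content built into [W, Theorem 3.5]: the dimension count $\dim Y_j = j > j-1 = \operatorname{codim} Z_j$, combined with the odd symmetry $\gamma(-u)=-\gamma(u)$, forces any admissible $\gamma$ to meet $N_{r,j}$. This yields $c_{r,j}\geq b_{r,j}>0$ and produces a (PS) sequence $\{u_{n,r,j}\}_{n\geq 1}$ of $J_{r,\mu}(\cdot,\G)$ at level $c_{r,j}$. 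Since $c_{r,j}>0$, invoking [Alves, Lemma 7.1] the sequence eventually lies in $U_\mu$, where $J_{r,\mu}(\cdot,\G)\equiv E_{r,\mu}(\cdot,\G)$ by construction of $\beta$; hence it is also a (PS) sequence of $E_{r,\mu}(\cdot,\G)$ at the same level, which is the desired conclusion.

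The main obstacle I anticipate is essentially one of bookkeeping rather than substance: matching the abstract hypotheses of [W, Theorem 3.5] with the precise definitions of $Y_j$, $Z_j$, $B_{r,j}$, $N_{r,j}$, and simultaneously confirming that the penalization term $H_{r,\mu}$ interferes neither with the evenness of the functional nor with the linking geometry (the latter reduces to noting that $H_{r,\mu}\geq 0$ and vanishes to sufficiently high order at $u=0$, so it does not spoil the positivity of $b_{r,j}$ or the coercivity used to obtain $a_{r,j}<0$). Once these verifications are in place, the conclusion follows immediately from the cited abstract theorem.
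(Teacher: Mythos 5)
Your proposal is correct and coincides with the paper's own treatment: the paper proves this lemma simply by citing \cite[Theorem 3.5]{W} applied to $J_{r,\mu}(\cdot,\G)$ together with the observation (via \cite[Lemma 7.1]{Alves}, recorded just before the statement) that a (PS) sequence of $J_{r,\mu}(\cdot,\G)$ at a positive level lies in $U_\mu$ and is a (PS) sequence of $E_{r,\mu}(\cdot,\G)$ at the same level. The verifications you add (evenness, $C^1$ regularity across $\partial U_\mu$, and the dimension/intersection count for the linking of $B_{r,j}$ with $N_{r,j}$) are exactly the routine checks the paper leaves implicit; note only that the positivity of $b_{r,j}$ and negativity of $a_{r,j}$ are hypotheses of the lemma, so they need not be re-derived here.
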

	To apply Lemma \ref{lemur}, we first establish some basic frameworks as in Section \ref{secp23}.
	
	For $r>1$ and $2 \leq j  \leq m$, set $\rho_{r,j}:= \sqrt{\mu\lambda_j }$. We have
	\begin{lemma}\label{lemfountain}
		For $r>1$ and $2 \leq j  \leq m$, we have $c_{r,j} \leq \mu \lambda_j/2$ and $a_{r,j}=-1$.\\
		Moreover, for any $2 \leq j  \leq m$ and $\varepsilon>0$, there exist $\xi_{\varepsilon,j}>0$ and $r_{\varepsilon,j}>1$, such that, for all $r\geq r_{\varepsilon,j}$,
		\begin{equation}\label{brje}
			\widetilde{b}_{r,j}: = \inf_{\begin{subarray}{c} u \in Z_j \\ \norm{u}= \xi_{\varepsilon,j}\end{subarray}} E_{r,\mu}(u,\G)  \geq \frac{\mu\lambda_j}{2}-\frac{C_{p,\G}\mu^\frac{p}{2}\lambda_j^\frac{p-2}{4}}{p}-\varepsilon.
		\end{equation}
	\end{lemma}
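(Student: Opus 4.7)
The plan is to establish the three claims separately, based on two spectral estimates on $Y_j$ and $Z_j$. For $u \in Y_j$, expanding in an $L^2$-orthonormal eigenbasis yields $\|u\|^2 \leq \lambda_j \|u\|_2^2$. Conversely, since $Z_j$ lies in the $L^2$-orthogonal complement of $\operatorname{span}\{\varphi_1,\ldots,\varphi_{j-1}\}$ (using that eigenfunctions of $D_A^2+V(x)$ for distinct eigenvalues are $L^2$-orthogonal), the spectral theorem together with \eqref{AssGm} gives the reverse bound $\|u\|^2 \geq \lambda_j \|u\|_2^2$ for every $u \in Z_j$. These two inequalities drive everything.

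The first two assertions follow quickly. For $a_{r,j}$: any $u \in Y_j$ with $\|u\|=\rho_{r,j}=\sqrt{\mu\lambda_j}$ satisfies $\|u\|_2^2 \geq \|u\|^2/\lambda_j = \mu$, hence $u \notin U_\mu$ and $J_{r,\mu}(u,\G)=-1$ by definition. For the upper bound on $c_{r,j}$, I will use the identity map on $B_{r,j}$ as a competitor in $\Gamma_{r,j}$ (it is odd and restricts to the identity on $\partial B_{r,j}$); on $U_\mu$ one has $J_{r,\mu}(u,\G) = \beta(E_{r,\mu}(u,\G)) \leq \max(E_{r,\mu}(u,\G), 0) \leq \tfrac{1}{2}\|u\|^2 \leq \mu\lambda_j/2$, using $\beta(t) \leq \max(t,0)$, $\Psi \geq 0$, and $f_r \geq 0$ on $[0,1)$, while outside $U_\mu$ the functional equals $-1$.

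The lower bound \eqref{brje} is the real content. I plan to pick $\delta=\delta(\varepsilon)\in (0,1)$ with $\mu\lambda_j\delta/2 < \varepsilon/2$ and set $\xi_{\varepsilon,j}:=\sqrt{(1-\delta)\mu\lambda_j}$. For $u \in Z_j$ with $\|u\|=\xi_{\varepsilon,j}$, the reverse spectral bound forces $\|u\|_2^2 \leq (1-\delta)\mu$, so $u$ lies strictly inside $U_\mu$, and Lemma \ref{lemgnsg} (with $\|D_Au\|_2 \leq \|u\|$ used in the noncompact case) yields
\[
\Psi(u,\G) \leq \frac{C_{p,\G}}{p}\|u\|^{(p-2)/2}\|u\|_2^{(p+2)/2} \leq \frac{C_{p,\G}\mu^{p/2}\lambda_j^{(p-2)/4}}{p}.
\]
The main obstacle will be absorbing the penalty $f_r(\|u\|_2^2/\mu)$: the key observation is that $\|u\|_2^2/\mu \leq 1-\delta$ uniformly in $u$, so monotonicity of $f_r$ (cf.\ \eqref{eqfr's}) gives $f_r(\|u\|_2^2/\mu) \leq f_r(1-\delta) = (1-\delta)^r/\delta \to 0$ as $r \to +\infty$. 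Choosing $r_{\varepsilon,j}>1$ so that $f_r(1-\delta)<\varepsilon/2$ for all $r \geq r_{\varepsilon,j}$ and combining the three estimates yields \eqref{brje}.
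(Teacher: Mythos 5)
Your proposal is correct and follows essentially the same route as the paper: the spectral bounds $\|u\|^2\leq\lambda_j\|u\|_2^2$ on $Y_j$ and $\|u\|^2\geq\lambda_j\|u\|_2^2$ on $Z_j$, the identity map as competitor for $c_{r,j}$, and the choice of sphere radius $\xi_{\varepsilon,j}=\sqrt{(1-\delta)\mu\lambda_j}$ (the paper's $\delta=1/k$) so that the penalty $f_r(1-\delta)\to 0$ as $r\to+\infty$. No substantive differences.
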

	\begin{proof}
		For all $u \in Y_j$, we have $\norm{u}^2\leq \lambda_j \norm{u}^2_2$ and so $Y_j \cap U_\mu \subset B_{r,j}$ and
		$$
		\sup_{u \in Y_j \cap U_\mu}E_{r,\mu}(u,\G) \leq \frac{\mu\lambda_j}{2}.
		$$
		Thus, taking $\gamma=\operatorname{id}$ in $\Gamma_{r,j}$,
		$$
		c_{r,j}
		\leq \sup_{u \in B_{r,j}}J_{r,\mu}(u,\G) \leq \max\left\{0, \sup_{u \in Y_j \cap U_\mu}E_{r,\mu}(u,\G)\right\} \leq \frac{\mu \lambda_j}{2}.
		$$
		Moreover, for all $u \in Y_j$, using again that $\norm{u}^2\leq \lambda_j \norm{u}^2_2$ we have that, if $\|u\|=\rho_{r,j}$, then $\norm{u}^2_2 \geq \mu$ and so $a_{r,j}=-1$.
		Let now $k>1$. For every $u \in Z_j$ with $\norm{u}^2=(k-1)\mu\lambda_j/k$, we have 
		$$
		\norm{u}^2_2 \leq \frac{\norm{u}^2}{\lambda_j} = \frac{\mu(k-1)}{k},
		$$
		and so, by Lemma \ref{lemgnsg} and since $f_r$ is increasing,
		$$
		E_{r,\mu}(u,\G)
		\geq 
		\frac{k-1}{k}\frac{\mu\lambda_j}{2}-\left(\frac{k-1}{k}\right)^\frac{p}{2}\frac{C_{p,\G}\mu^\frac{p}{2}\lambda_j^\frac{p-2}{4}}{p}- k\left(\frac{k-1}{k}\right)^r.
		$$
		Thus, for any $\varepsilon>0$, taking $k$ large enough, we can find $\xi_{\varepsilon,j}>0$ and $r_{\varepsilon,j}>1$, such that, for all $r\geq r_{\varepsilon,j}$, 
		$$ 
		\widetilde{b}_{r,j}\geq \frac{\mu\lambda_j}{2}-\frac{C_{p,\G}\mu^\frac{p}{2}\lambda_j^\frac{p-2}{4}}{p}-\varepsilon.
		$$
	\end{proof}
	Now we are ready to prove Theorem \ref{th4}.
	\begin{proof}[Proof of Theorem \ref{th4}]
		Let $2 \leq j\leq m$. If $r_1>1$ is such that $c_{r_{1},j}>0$, then, for all $\gamma \in \Gamma_{r_1,j}$, we have $\displaystyle\max_{u \in B_{r_1,j}} J_{r_1,\mu}(\gamma(u),\G)>0$, and thus $$\max_{u \in B_{r_1,j}} J_{r_1, \mu}(\gamma(u),\G)=\max_{u \in B_{r_1,j}} E_{r_1, \mu}(\gamma(u),\G).$$
		Since $\rho_{r,j}, B_{r,j}$ and $\Gamma_{r,j}$  are independent of $r>1$ and, if $1<r_1\leq r_2$, $E_{r_1, \mu}(\cdot,\G) \leq E_{r_2, \mu}(\cdot,\G)$ on $H_A^1(\G,\C)$, we have
		\begin{equation*}
			0<c_{r_{1},j}= \inf_{\gamma \in \Gamma_{r_1,j}} \max_{u \in B_{r_1,j}} J_{r_1,\mu}(\gamma(u),\G)= \inf_{\gamma \in \Gamma_{r_1,j}} \max_{u \in B_{r_1,j}} E_{r_1, \mu}(\gamma(u),\G)\leq\inf_{\gamma \in \Gamma_{r_2,j}} \max_{u \in B_{r_2,j}} E_{r_2, \mu}(\gamma(u),\G).
		\end{equation*}
		Hence, for all $\gamma \in \Gamma_{r_2,j}$, we have $\displaystyle\max_{u \in B_{r_2,j}} E_{r_2,\mu}(\gamma(u),\G)>0$ and so
		\begin{equation}\label{moncr}
			c_{r_{1},j}\leq\inf_{\gamma \in \Gamma_{r_2,j}} \max_{u \in B_{r_2,j}} E_{r_2, \mu}(\gamma(u),\G)=\inf_{\gamma \in \Gamma_{r_2,j}} \max_{u \in B_{r_2,j}} J_{r_2,\mu}(\gamma(u),\G)=c_{r_{2},j}.
		\end{equation}
		Let us fix $k \in \mathbb{N}^+$ with $2\leq k\leq m$. 
		If $c_{r_{1},k}>0$, then, due to the monotonicity in \eqref{moncr}, we can define $$c_{\infty,k}:=\lim_{r \to +\infty}c_{r,k}.$$
		Let us denote $c_{\infty,1}:=c_\infty$ which is defined in Section \ref{secp23}. 
		
		By the definition of $J_{r,\mu}(\cdot,\G)$, taking $\xi_{r,j}=\xi_{\varepsilon,j}$, we have that, if $\widetilde{b}_{r,j}>0$, then $b_{r,j}=\widetilde{b}_{r,j}>0$.\\
		Moreover, since $\lambda_{1} < \lambda_2 <\ldots< \lambda_{{k-1}} < \lambda_{k}$, there exists $\widetilde{\mu}_{k}$ such that, for all $0<\mu<\widetilde{\mu}_{k}$,
		\begin{equation}\label{chain}
			\frac{\mu \lambda_1}{2}< \frac{\mu\lambda_{2}}{2}-\frac{C_{p,\G}\mu^\frac{p}{2}\lambda_{2}^\frac{p-2}{4}}{p} < \frac{\mu\lambda_{2}}{2}<\ldots <\frac{\mu\lambda_{k}}{2}-\frac{C_{p,\G}\mu^\frac{p}{2}\lambda_{k}^\frac{p-2}{4}}{p} <\frac{\mu\lambda_k}{2}.
		\end{equation}
		Fix $0<\mu<\widetilde{\mu}_k$. By Lemma \ref{lemfountain}, for any $2 \leq i \leq k$, 
		$c_{r,i} \leq \mu \lambda_i/2$ and $a_{r,i}=-1$ for all $r>1$. Moreover, for every $\varepsilon>0$, there exist $\xi_{i}:=\xi_{\varepsilon,i}>0$ and $r_{i}>1$, such that, for all $r\geq r_{i}$, \eqref{brje} holds. Then, if $\varepsilon>0$ small enough, using \eqref{chain}, we get that for every $2 \leq i \leq k$, $\widetilde{b}_{r,i}>0$. Hence, by Lemma \ref{lemth3.5}, $c_{r,i}\geq b_{r,i}$ and so
		$$
		\frac{\mu\lambda_{i}}{2}
		\geq c_{r,i}
		\geq b_{r,i}
		=\widetilde{b}_{r,i}
		> \frac{\mu\lambda_{{i-1}}}{2}.
		$$
		Thus, for $\displaystyle r \geq r_0:= \max_{2\leq i\leq j}\{r_{i}\}$, taking $c_{\infty,1}:=c_\infty$ and using also Lemma \ref{lemcin}, we conclude
		$$
		0 < c_{\infty,1} \leq \frac{\mu\lambda_{1}}{2} < b_{r,2} \leq c_{\infty,2} \leq \frac{\mu\lambda_{2}}{2} <\ldots <
		b_{r,k}\leq c_{\infty,k} \leq \frac{\mu\lambda_{k}}{2}.
		$$
		
		Since $\lambda_{k} < \inf\sigma_{\rm ess}(D^2_A + V(x))$, using \eqref{moncr} and Lemma \ref{lemth3.5}, by Lemma \ref{lemur} we conclude that, for any $2 \leq i \leq k$, there exists $u_{i} \in H_A^1(\G,\C)$ satisfying $E(u_{i},\G)=c_{\infty,i}$ and one of the following two alternatives occurs:
		\begin{enumerate}[label=\rm(\roman*),ref=\roman*]
			\item \label{pfth41} either $u_i$ is a critical point of $E(\cdot, \G)$ constrained on $H_\mu(\G)$ with Lagrange multiplier $\omega_i \in [0,2c_{\infty,i}/\mu]\subset[0,\lambda_i]$
			\item \label{pfth42} or $u_i$ is a critical point of $E(\cdot, \G)$ constrained on $H_\nu(\G)$ for some $0<\nu < \mu$ with Lagrange multiplier $\omega_i =0$. 
		\end{enumerate}
		Applying Lemma \ref{lemnonex} for $c=\lambda_k/2$, we obtain that, if $\mu$ is small enough ($\mu\in(0,\min\{\mu_{{\lambda_k}/{2},p}, \widetilde{\mu}_k\}$), for any $1\leq i\leq k$, (\ref{pfth42}) can not occur. Thus, $E( \cdot,\G)$ has at least $k$ critical points $u_1, u_2,\ldots,u_k$ constrained on $H_\mu(\G)$.
		
		Now, let $2<p<6$ (or $p = 6$ with $\mu<\sqrt{3}C_{6,\G}^{-\frac{1}{2}}$)\footnote{Note that $\sqrt{3}C_{6,\G}^{-\frac{1}{2}}>C_{6,\G}^{-\frac{1}{2}}=\mu_{\lambda_{j_i}/{2},6}$.}, $m=+\infty$, and $\lim\limits_{j\to+\infty}\lambda_j=+\infty$. 
		For $i \in \mathbb{N}^+$, we can choose  $j_i \geq 2$ such that $i \mapsto j_i$ is strictly increasing, $j_i\to +\infty$ and $\lambda_{j_i} \to +\infty$ as $i \to +\infty$. Then, by Lemma \ref{lemfountain}, there exist $\xi_{1,i}>0$ and $r_i>1$ such that 
		$$
		\tilde{b}_{r_i,j_i} \geq \frac{\mu\lambda_{j_i}}{2}-\frac{C_{p,\G}\mu^\frac{p}{2}\lambda_{j_i}^\frac{p-2}{4}}{p}-1.
		$$
		Thus, $\tilde{b}_{r_i,j_i} \to +\infty$ as $i \to +\infty$, and so, by the definition of $J_{r,\mu}(\cdot,\G)$, taking $\xi_{r_i,i}=\xi_{1,i}$, for $i$ large enough we have ${b}_{r_i,j_i}=\tilde{b}_{r_i,j_i}>0$ and, eventually passing to a subsequence,  we can consider $i \mapsto b_{r_i,j_i}$ strictly increasing.
		
		Hence, repeating the previous arguments, we conclude that, for any fixed $\mu>0$ (note that, since, by Lemma \ref{lemth3.5}, $c_{\infty,j_i}\geq b_{r_i,j_i} \to +\infty$ as $i \to +\infty$, the assumption that $\mu<\tilde{\mu}_k$ is not required) and for any $i \geq 2$, there exists $u_{i} \in H_A^1(\G,\C)$ satisfying $E(u_{i},\G)=c_{\infty,j_i}$ with, by Lemma \ref{lemfountain}, $c_{\infty,j_{i-1}}<c_{\infty,j_i}\leq \mu\lambda_{j_i}/2$, and one of the following two alternatives occurs:
		\begin{enumerate}[label=\rm(\roman*),ref=\roman*]
			\item  either $u_i$ is a critical point of $E(\cdot, \G)$ constrained on $H_\mu(\G)$ with Lagrange multiplier $\omega_i \in [0,2c_{\infty,j_i}/\mu]\subset[0,\lambda_{j_i}]$
			\item  or $u_i$ is a critical point of $E(\cdot, \G)$ constrained on $H_\nu(\G)$ for some $0<\nu < \mu$ with Lagrange multiplier $\omega_i =0$. 
		\end{enumerate}
		
		Then, by Lemma \ref{lemnonex}, we conclude that, for any $
		0<\mu\leq \mu_{\lambda_j/{2},p}= C_{p,\G}^{\frac{2}{2-p}}\lambda_1^{\frac{6-p}{2p-4q}}, 
		$
		$E( \cdot,\G)$ has infinitely critical points constrained on $H_\mu(\G)$ whose corresponding energy levels tend to $+\infty$. 
		
		Finally, let $2<p<6$, $m=+\infty$, and $\lim\limits_{j\to+\infty}\lambda_j=+\infty$, and let us extend the existence of infinitely critical points as above to all $\mu>0$.
		By Lemma \ref{lemnonex}, since $\mu_{\lambda, p}^*\to +\infty$ as $\lambda \to -\infty$, we have that for any $\mu,\nu >0$ and  $\lambda<0$ such that $0<\nu<\mu\leq \mu_{\lambda, p}^*$, there exists no $u \in H_\nu(\G)$ satisfying $\langle E'(u,\G),\cdot \rangle-\lambda(\cdot,u)_2 = 0$. Fix $\mu >0$ and $\lambda<0$ such that $ \mu_{\lambda, p}^*\geq \mu$. Then, repeating the previous arguments for $2<p<6$, $m=+\infty$, and $\lim\limits_{j\to+\infty}\lambda_j=+\infty$ to $E(\cdot,\G)-\lambda\norm{\cdot}^2_2/2$, we conclude that it has infinitely critical points constrained on $H_\mu(\G)$ whose corresponding energy levels tend to $+\infty$.
		This completes the proof.
	\end{proof}

	\section{Proofs of Theorems \ref{thn3} and \ref{thn4}}
	\label{secnoncompact}
	In this section, we consider a connected noncompact metric graph $\G=(\V,\E)$ with a finite number of edges whether the compact core of $\G$ is empty or non-empty  under the assumption  (\ref{AssGm}).

	We shall study the existence and multiplicity of critical points of the functional $E(\cdot,\G)$ constrained on the $L^2$-sphere
	$H_\mu(\G)$ with $\mu > 0$.
	The proofs of Theorems \ref{thn3} and \ref{thn4} are almost the same as the proof of Theorems \ref{th2} and \ref{th4}, respectively, except for Lemmas \ref{lemunr} and \ref{lemcin}. Therefore, we give the details for this preliminary part and we only present a sketch of the proofs of Theorems \ref{thn3} and \ref{thn4} here.
	
	Using the same notation introduced before also in this case, fixed $\mu >0$, we assume that for any $r >1$ sufficiently large there exists a (PS) sequence $\{u_{n,r}\}_{n\geq 1}\subset H^1_A(\G,\C)$ for $E_{r,\mu}(\cdot,\G)$ at level $c_r>0$ and that $r \mapsto c_r$ is non-decreasing.
	
	Since we are dealing with problem \eqref{eqmagnoncompact} on noncompact metric graphs, we need to overcome the lack of compactness. 
	
	Recalling that, by \eqref{AssGm}, for $1\leq k\leq m$, $\lambda_k$ is an eigenvalue below $\inf\sigma_{\rm ess}(D^2_A + V(x))$, let 
	\begin{equation}\label{eqdeltak}
		\delta_k:=\begin{cases}
			\displaystyle\frac{\inf\sigma_{\rm ess}(D^2_A + V(x))-\lambda_k}{2}\quad &\text{if }\inf\sigma_{\rm ess}(D^2_A + V(x))<+\infty\\
			1 &\text{if }\inf\sigma_{\rm ess}(D^2_A + V(x))=+\infty,
		\end{cases}
	\end{equation}
	and
	$$
	Y_k:=\operatorname{span}\{u \in H^1_A(\G,\C):(D^2_A + V(x)) u=\lambda u \text{ for some }\lambda \leq \lambda_k+\delta_k\}.
	$$ Then, since $\lambda_k + \delta_k<\inf\sigma_{\rm ess}(D^2_A + V(x))$, the dimension of $Y$ is finite. Consequently, $H^1_A(\G,\C)$ possesses the orthogonal decomposition
	\begin{equation*}\label{eqndecom}
		H^1_A(\G,\C) = Y_k \oplus Y^\perp_k,
	\end{equation*}
	so that for every $u\in Y_k$ and $v \in Y^\perp_k$, $(u,v)_2=0$.
	Let $P_k$ denote orthogonal projector from $H^1_A(\G,\C)$ onto $Y$. Then, every $u \in H^1_A(\G,\C)$ can be written as
	\begin{equation*}
		u = P_ku + (I-P_k)u,
	\end{equation*}
	where $P_ku \in Y_k$ and $(I-P_k)u \in Y^\perp_k$ and, arguing as in the proof of \eqref{eqyperp},
	\begin{equation}\label{eqnyperp}
		\norm{v}^2 > (\lambda_k+\delta_k)\norm{v}^2_2 \quad  \text{ for any }v \in Y^\perp_k.
	\end{equation}
	Set now
	\begin{equation*}
		\mu^{**}_k:=C_{\infty,\G}^{-2}\left(\frac{p-2}{p\lambda_k}\right)^\frac{1}{2}\left[\frac{\delta_k}{3(p-1)}\right]^\frac{2}{p-2}
	\end{equation*}
	and
	$$c_\infty:=\lim\limits_{r\to+\infty} c_r.
	$$
	
	\begin{lemma}\label{lemnur}
		If  $0<\mu\leq\mu^{**}_k$ and  $c_\infty \leq {\mu}\lambda_k/{2}$, then, for  $r>1$ sufficiently large, there exists $u_r \in U_\mu$ such that, up to a subsequence, $u_{n,r} \to u_r$ in $H^1_A(\G,\C)$, as $n \to +\infty$.  Moreover, $u_r$ satisfies $$
		E_{r, \mu}(u_{r},\G)=c_{r} \text{ and } E_{r, \mu}^{\prime}(u_{r},\G)=0
		$$
		with 
		$$
		\frac{2}{\mu}f'_r\left(\frac{\norm{u_r}_2^2}{\mu}\right) = \lambda_{\infty,r}  \text{ and } \limsup_{r \to +\infty}\lambda_{\infty,r} \leq \frac{2c_\infty}{\mu}.
		$$
	\end{lemma}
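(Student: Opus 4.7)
The plan is to follow the blueprint of Lemma \ref{lemunr}, but since on a noncompact metric graph with nonlinearity on the whole $\G$ we lack a compact embedding $H^1_A(\G,\C)\hookrightarrow L^p(\G,\C)$, the passage to the limit in the nonlinear term cannot be done globally. I would compensate by controlling $\|u\|_\infty$ via the Gagliardo--Nirenberg--Sobolev inequality of Lemma \ref{lemgnsg}, exploiting the smallness hypothesis $\mu\le\mu^{**}_k$ together with the spectral gap encoded in the decomposition $H^1_A(\G,\C)=Y_k\oplus Y_k^\perp$ introduced before the statement.

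First, Lemma \ref{lembounded} gives $\{u_{n,r}\}$ bounded in $H^1_A(\G,\C)$, so, up to a subsequence, $u_{n,r}\rightharpoonup u_r$, and Lemma \ref{lemlambdanr} yields $0\le\lambda_{n,r}\to\lambda_{\infty,r}$ with $\limsup_{r\to+\infty}\lambda_{\infty,r}\le 2c_\infty/\mu\le\lambda_k$. Arguing verbatim as in Lemma \ref{lemunr} one shows $\|u_r\|_2^2<\mu$, the strict inequality being forced by the fact that $\|u_r\|_2^2=\mu$ would drive $f_r(\|u_{n,r}\|_2^2/\mu)\to-\infty$ in contradiction with $c_r>0$. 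Next, by Remark \ref{recompact} the nonlinear term passes to the limit when tested against any $\varphi\in H^1_{A,c}(\G,\C)$, and Lemma \ref{lemdense} extends this to all test functions, so that $u_r$ weakly satisfies
\[
(u_r,\varphi)-\operatorname{Re}\int_\G |u_r|^{p-2}u_r\bar\varphi\,dx-\lambda_{\infty,r}(u_r,\varphi)_2=0\qquad \forall\varphi\in H^1_A(\G,\C).
\]
Setting $w_n:=u_{n,r}-u_r$, testing both $\langle E'_{r,\mu}(u_{n,r},\G),w_n\rangle=o_n(1)$ and the equation above against $w_n$ and subtracting yields, as in \eqref{equnr},
\[
\|w_n\|^2=\lambda_{\infty,r}\|w_n\|_2^2+\operatorname{Re}\int_\G\bigl(|u_{n,r}|^{p-2}u_{n,r}-|u_r|^{p-2}u_r\bigr)\overline{w_n}\,dx+o_n(1).
\]

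The core of the proof is then the following gap argument. Using the elementary pointwise inequality $\bigl||a|^{p-2}a-|b|^{p-2}b\bigr|\le (p-1)(|a|^{p-2}+|b|^{p-2})|a-b|$, the nonlinear integral is bounded by $(p-1)\bigl(\|u_{n,r}\|_\infty^{p-2}+\|u_r\|_\infty^{p-2}\bigr)\|w_n\|_2^2$. By Lemma \ref{lemgnsg} one has $\|u\|_\infty^{p-2}\le C_{\infty,\G}^{p-2}\|u\|^{(p-2)/2}\|u\|_2^{(p-2)/2}$, and from the proof of Lemma \ref{lembounded} together with $c_r\le c_\infty\le\mu\lambda_k/2$ we get $\|u_{n,r}\|^2\le p\mu\lambda_k/(p-2)+o_n(1)$ and the analogous bound for $u_r$; plugging these into the definition of $\mu^{**}_k$ gives $(p-1)\|u_{n,r}\|_\infty^{p-2}\le\delta_k/3+o_n(1)$, so
\[
\|w_n\|^2\le\Bigl(\lambda_k+\tfrac{2\delta_k}{3}\Bigr)\|w_n\|_2^2+o_n(1)\|w_n\|_2^2+o_n(1).
\]
Decomposing $w_n=P_kw_n+(I-P_k)w_n$, the projection $P_kw_n\to 0$ strongly because $Y_k$ is finite-dimensional and $w_n\rightharpoonup 0$, while \eqref{eqnyperp} gives $\|(I-P_k)w_n\|^2\ge(\lambda_k+\delta_k)\|(I-P_k)w_n\|_2^2$. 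Combining the two inequalities forces $(\delta_k/3+o_n(1))\|(I-P_k)w_n\|_2^2\le o_n(1)$, hence $(I-P_k)w_n\to 0$ in $L^2$ and, by the preceding estimate, in $H^1_A(\G,\C)$. Therefore $u_{n,r}\to u_r$ strongly, which yields $E_{r,\mu}(u_r,\G)=c_r$ and $E'_{r,\mu}(u_r,\G)=0$; the claims on $\lambda_{\infty,r}$ are then immediate from Lemma \ref{lemlambdanr}. The main obstacle is precisely the calibration of $\mu^{**}_k$: the $L^\infty$-based bound on the nonlinear perturbation of the quadratic form must remain strictly below the spectral gap $\delta_k$ in order to beat the inequality \eqref{eqnyperp}, and this is exactly what the exponents in the definition of $\mu^{**}_k$ are chosen to ensure.
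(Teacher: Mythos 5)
Your proposal follows essentially the same route as the paper: weak limit plus local compactness to identify the limit equation, an $L^\infty$-based Lipschitz estimate on the nonlinearity calibrated by $\mu^{**}_k$ to keep the perturbation below the spectral gap $\delta_k$, and the splitting along $Y_k\oplus Y_k^\perp$ with \eqref{eqnyperp} to recover strong convergence. The only cosmetic difference is that you use the sum bound $(p-1)(|a|^{p-2}+|b|^{p-2})|a-b|$ where the paper derives the sharper $\max$ version via the Jacobian of $(x,y)\mapsto(x^2+y^2)^{(p-2)/2}(x,y)$, which costs you $2\delta_k/3$ instead of $\delta_k/3$ but still fits strictly inside the gap $\delta_k$, so the argument closes.
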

	\begin{proof}
		Let $r>1$  be sufficiently large as in Section \ref{seccompact} and take $\delta_k>0$ as in \eqref{eqdeltak}. By repeating the argument of Lemma \ref{lemlambdanr}, we have 
		\begin{equation}\label{eqnlambdanr}
			\lambda_{\infty,r}
			\leq\frac{2{c_\infty}}{\mu}\leq \lambda_k .
		\end{equation}
		Then, as in Lemma \ref{lembounded}, we can conclude that $\{u_{n,r}\}_{n\geq 1}$ is bounded  in $H^1_A(\G,\C)$, 
		\begin{equation}\label{eqnsupbound}
			\limsup_{n \to +\infty} \norm{u_{n,r}}^2\leq \frac{2pc_r}{p-2}\leq \frac{2pc_\infty}{p-2}\leq \frac{p\mu\lambda_k}{p-2},
		\end{equation}
		and thus, up to a subsequence, $u_{n,r} \rightharpoonup u_{r}$ in $H^1_A(\G,\C)$ and, arguing as in the proof of Lemma \ref{lemunr}, we get that $\norm{u_r}_2^2 <\mu$.  
		
		Now we are going to prove that $u_{n,r} \to u_{r}$ in $H_A^1(\G,\C)$. To this end, we first verify that 
		\begin{equation}\label{eqneur}
			\langle E'(u_{r},\G), \cdot \rangle - \lambda_{\infty,r}(u_r,\cdot)_2=0.
		\end{equation} 
		Recall that, by Lemma \ref{lemdense}, $H^1_{A,c}(\G,\C)$ is a dense subset of $H^1_{A}(\G,\C)$. For any $\varphi \in H^1_{A,c}(\G,\C)$, since $\{u_{n,r}\}_{n\geq 1}$ is a (PS) sequence and $H^1_A(\G,\C)$ is compactly embedded in $L^2(B,\mathbb{C})$, where $B = \operatorname{supp}\varphi$ (see Remark \ref{recompact}),  we have
		$$
		\begin{aligned}
			\langle E'(u_{r},\G), \varphi \rangle - \lambda_{\infty,r}(u_r,\varphi)_2=
			&\langle E'_{r,\mu}(u_{n,r},\G), \varphi \rangle+ (u_r-u_{n,r},\varphi) +\lambda_{n,r}(u_{n,r},\varphi)_2 - \lambda_{\infty,r}(u_r,\varphi)_2  \\&+\operatorname{Re}\int_{B} (\abs{u_{n,r}}^{p-2} u_{n,r}-\abs{u_{r}}^{p-2} u_{r})\overline{\varphi} \, dx\\
			=&(\lambda_{n,r}-\lambda_{\infty,r})(u_{n,r},\varphi)_2 +\lambda_{\infty,r}(u_{n,r}-u_r,\varphi)_2+o_n(1)\\
			=&o_n(1).
		\end{aligned}
		$$
		Thus, by density, we get \eqref{eqneur}.

		Moreover
		\begin{equation}\label{eqnunr}
			\begin{split}
				\norm{u_{n,r}-u_{r}}^{2}
				&=
				\langle E'(u_{n,r},\G)-E'(u_r,\G), u_{n,r}-u_r \rangle \\
				&\quad
				+\operatorname{Re}\int_{\G} (\abs{u_{n,r}}^{p-2} u_{n,r}-\abs{u_{r}}^{p-2} u_{r})\overline{(u_{n,r}-u_{r})} \, dx\\
				&=
				\langle E'_{r,\mu}(u_{n,r},\G), u_{n,r}-u_r \rangle 
				-\langle E'(u_r,\G), u_{n,r}-u_r \rangle
				+\lambda_{n,r}(u_{n,r},u_{n,r}-u_r)_2 \\
				&\quad
				+\operatorname{Re}\int_{\G} (\abs{u_{n,r}}^{p-2} u_{n,r}-\abs{u_{r}}^{p-2} u_{r})\overline{(u_{n,r}-u_{r})} \, dx\\
				&=\lambda_{\infty, r}\norm{u_{n,r}-u_{r}}_{2}^{2} +\operatorname{Re}\int_{\G} (\abs{u_{n,r}}^{p-2} u_{n,r}-\abs{u_{r}}^{p-2} u_{r})\overline{(u_{n,r}-u_{r})} \, dx+o_{n}(1).\\
			\end{split}
		\end{equation}
		
		Let $g:\R^2 \to \R^2$ be defined by $g(x,y):=(x^2+y^2)^{\frac{p-2}{2}}(x,y)$. Then, the Jacobian matrix of $g$ is 
		$$
		J_g(x,y)=(x^2+y^2)^{\frac{p-2}{2}} I+(p-2)(x^2+y^2)^{\frac{p-4}{2}}
		\begin{pmatrix}
			x^2 & xy \\
			xy & y^2
		\end{pmatrix}
		.
		$$
		and thus, for any $(x_1,y_1),(x_2,y_2)\in \R^2$, 
		$$
		J_g(x_1,y_1)
		\begin{pmatrix}
			x_2 \\
			y_2
		\end{pmatrix}
		=(x_1^2+y_1^2)^\frac{p-2}{2} \begin{pmatrix}
			x_2 \\
			y_2
		\end{pmatrix} + (p-2)(x_1^2+y_1^2)^\frac{p-4}{2} (x_1x_2+y_1y_2)\begin{pmatrix}
			x_1 \\
			y_1
		\end{pmatrix}.
		$$
		Thus, by the fundamental theorem of calculus, for any 
		$(x_1,y_1),(x_2,y_2)\in \R^2$, we have 
		$$g(x_2,y_2)-g(x_1,y_1)=\int_0^1 J_g (x_1+t(x_2-x_1),y_1+t(y_2-y_1))\begin{pmatrix}
			x_2-x_1 \\
			y_2-y_1
		\end{pmatrix}dt,$$
		which implies
		\begin{equation*}
			\abs{g(x_2,y_2)-g(x_1,y_1)}\leq (p-1)\max\{(x_1^2+y_1^2)^\frac{p-2}{2},(x_2^2+y_2^2)^\frac{p-2}{2}\}((x_2-x_1)^2+(y_2-y_1)^2)^\frac{1}{2}.
		\end{equation*}
		Since the norm is lower semicontinuous, then, by \eqref{eqnsupbound}, also
		$\norm{u_{r}}^2\leq p\mu\lambda_k/(p-2)$. Then, by Lemma \ref{lemgnsg} and \eqref{eqnsupbound},
		\begin{align*}
			\left|\operatorname{Re}\int_{\G} (\abs{u_{n,r}}^{p-2} u_{n,r}-\abs{u_{r}}^{p-2} u_{r})\overline{(u_{n,r}-u_{r})} \, dx\right|
			&\leq
			(p-1)\max\{\norm{u_{n,r}}_\infty^{p-2},\norm{u_{r}}_\infty^{p-2}\} \|u_{n,r}-u_{r}\|_2^2 
			\\
			&\leq
			(p-1)C_{\infty,\G}^{p-2}\mu^{\frac{p-2}{4}}\max\{\norm{u_{n,r}}^\frac{p-2}{2},\norm{u_{r}}^\frac{p-2}{2}\}\norm{u_{n,r}-u_{r}}_{2}^{2}\\
			&\leq
			(p-1)C_{\infty,\G}^{p-2}\mu^{\frac{p-2}{2}}\left(\frac{p\lambda_k}{p-2}\right)^\frac{p-2}{4}\norm{u_{n,r}-u_{r}}_{2}^{2}
		\end{align*}
		and so, by \eqref{eqnunr}, we obtain
		$$
		\norm{u_{n,r}-u_{r}}^{2}
		\leq
		\left[\lambda_{\infty, r} + (p-1)C_{\infty,\G}^{p-2}\mu^{\frac{p-2}{2}}\left(\frac{p\lambda_k}{p-2}\right)^\frac{p-2}{4}\right]\norm{u_{n,r}-u_{r}}_{2}^{2}+o_{n}(1).
		$$
		Now, if $0<\mu \leq \mu^{**}_k$, we get
		\begin{equation}\label{eqnunrmu}
			\norm{u_{n,r}-u_{r}}^{2} \leq\left(\lambda_{\infty, r}+ \frac{1}{3}\delta_k \right)\norm{u_{n,r}-u_{r}}_{2}^{2}+o_{n}(1).
		\end{equation}
		Since the dimension of $Y_k$ is finite, we obtain $P_k(u_{n,r}-u_r) \to 0$ in $H^1_A(\G,\C)$. Moreover, by \eqref{eqnyperp} and  \eqref{eqnunrmu}  it follows that 
		$$
		\left(\lambda_{\infty, r}+\frac{1}{3}\delta_k\right)\norm{(I-P_k)(u_{n,r}-u_{r})}^{2}_2 + o_n(1) \geq \norm{(I-P_k)(u_{n,r}-u_{r})}^{2} > (\lambda_k+\delta_k)\norm{(I-P_k)(u_{n,r}-u_{r})}^{2}_2.
		$$
		Hence, by \eqref{eqnlambdanr}, $(I-P_k)(u_{n,r}-u_{r}) \to 0$ in $H^1_A(\G,\C)$. Therefore, $u_{n,r}\to u_{r}$ in $H^1_A(\G,\C)$ and so we can conclude.
	\end{proof}
	Repeating the arguments used in the first part of Section \ref{secp23}, for $r > 1$, the mountain pass geometry of the functional $E_{r, \mu}(\cdot,\G)$ allows us to find a (PS) sequence $\{u_{n,r}\}_{n\geq1}$  satisfying 
	\begin{equation*}
		E_{r, \mu}(u_{n,r},\G) \rightarrow c_{r}, \quad E_{r, \mu^{\prime}}(u_{n,r},\G) \rightarrow 0,
	\end{equation*}
	where $r\mapsto c_r$ is non-decreasing. Moreover, defining $$c_{\infty}:=\sup\limits_{r>1} c_{r}=\lim\limits_{r\to + \infty} c_{r},$$
	we can show that, for all $u \in H^1_A(\G,\C) \backslash\{0\}$ with $\norm{u}_2^2 = \mu$, we have
	\begin{equation*}
		c_{\infty} \leq \sup _{0 \leq t<1} E(t u,\G).
	\end{equation*}
	In this case we have the following upper bound estimate for $c_\infty$, which is different from  Lemma \ref{lemcin} under condition \eqref{AssGm}, while its proof is similar to the argument of Lemma \ref{lemcin} under condition \eqref{AssG}.
	\begin{lemma}\label{lemncin}
		If the condition \eqref{AssGm} holds, then
		$$c_{\infty} < \frac{\mu\lambda_1}{2} < \frac{\mu}{2} \inf\sigma_{\rm ess}(D^2_A + V(x)).$$
	\end{lemma}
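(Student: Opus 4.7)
The plan is to mimic the proof of Lemma \ref{lemcin} under assumption \eqref{AssG}, exploiting the fact that here the nonlinear term $\Psi(u,\G)=\tfrac{1}{p}\int_\G|u|^p\,dx$ is integrated over the whole graph $\G$, not just over the compact core. This will automatically give strict inequality on the right-hand side, in contrast to what happens in Lemma \ref{lemcin} under \eqref{AssGm} where $\varphi_1$ could vanish on $\K$.

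First, I would let $\varphi_1\in H^1_A(\G,\C)$ be an eigenfunction associated with $\lambda_1$, normalized so that $\|\varphi_1\|_2^2=\mu$. Since $\varphi_1\not\equiv 0$ and $\varphi_1$ is continuous on $\G$, we have
\[
\Psi(\varphi_1,\G)=\frac{1}{p}\int_\G|\varphi_1|^p\,dx>0,
\]
which is the key point that makes this argument work without any extra assumption on the behavior of $\varphi_1$ on a subgraph. Then along the ray $t\mapsto t\varphi_1$ we get
\[
E(t\varphi_1,\G)=\frac{t^2\mu\lambda_1}{2}-t^p\Psi(\varphi_1,\G),\qquad t\geq 0,
\]
and, setting $g(t):=E(t\varphi_1,\G)$, I would consider
\[
t_0:=\left(\frac{\mu\lambda_1}{p\,\Psi(\varphi_1,\G)}\right)^{\frac{1}{p-2}}.
\]

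Next, I would split into the two cases exactly as in Lemma \ref{lemcin} under \eqref{AssG}. If $t_0\geq 1$, then $g$ is increasing on $[0,1]$, so $\sup_{0\leq t\leq 1}g(t)=g(1)=\tfrac{\mu\lambda_1}{2}-\Psi(\varphi_1,\G)<\tfrac{\mu\lambda_1}{2}$. If $0<t_0<1$, then $t_0$ is the unique maximum point of $g$ in $[0,1]$ and $g(t_0)=\tfrac{t_0^2\mu\lambda_1}{2}-t_0^p\Psi(\varphi_1,\G)<\tfrac{t_0^2\mu\lambda_1}{2}<\tfrac{\mu\lambda_1}{2}$. In both cases, using the general upper bound $c_\infty\leq \sup_{0\leq t\leq 1}E(t\varphi_1,\G)$ established just before the statement, we deduce $c_\infty<\tfrac{\mu\lambda_1}{2}$.

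Finally, the second inequality $\tfrac{\mu\lambda_1}{2}<\tfrac{\mu}{2}\inf\sigma_{\rm ess}(D_A^2+V(x))$ is a direct consequence of assumption \eqref{AssGm}, which postulates precisely that $\lambda_1<\inf\sigma_{\rm ess}(D_A^2+V(x))$. There is no real obstacle: the only reason this statement is separated from Lemma \ref{lemcin} is that in the present noncompact setting the energy integrates $|u|^p$ over the whole of $\G$, which guarantees $\Psi(\varphi_1,\G)>0$ for free, thereby upgrading the conclusion of the \eqref{AssGm}-case of Lemma \ref{lemcin} from a non-strict inequality to a strict one.
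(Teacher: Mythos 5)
Your proposal is correct and follows essentially the same route as the paper: both take the $L^2$-normalized first eigenfunction $\varphi_1$, observe that $\Psi(\varphi_1,\G)>0$ automatically because the nonlinearity now acts on all of $\G$, and then run the two-case analysis on $t_0$ exactly as in the \eqref{AssG}-case of Lemma \ref{lemcin}. Your closing remark correctly identifies the only substantive point, namely that integrating $|u|^p$ over the whole graph is what upgrades the inequality to a strict one under \eqref{AssGm}.
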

	\begin{proof}
		Let $\varphi_1$ be the eigenfunction corresponding to the first eigenvalue $\lambda_1$ and that satisfies $\norm{\varphi_1}^2_2=\mu$. Then,  for $t \geq  0$,
		$$
		g(t):=E(t\varphi_1,\G) = \frac{t^2\mu\lambda_1}{2} - t^p\Psi(\varphi_1,\G).
		$$
		The nontriviality of $\varphi_1$ implies that $\Psi(\varphi_1,\G)>0$ and so
		$$t_0 := \left(\frac{\mu \lambda_1}{p\Psi(\varphi_1,\G)}\right)^\frac{1}{p-2}$$
		is the unique critical point (maximizer) of $g$ in $(0,+\infty)$.
		If $t_0 
		\geq 1$, then $g$ is increasing for $0 \leq t \leq 1$, and thus
		$$
		c_\infty
		\leq  \sup _{0 \leq t\leq 1} E(t \varphi_1,\G)
		\leq E(\varphi_1,\G)
		=\frac{\mu\lambda_1}{2} - \Psi(\varphi_1,\G) < \frac{\mu\lambda_1}{2}.
		$$
		If $0<t_0<1$, then $t_0$ is the unique maximum point of  $g(t)$ in $0 \leq t \leq 1$, and thus
		$$
		c_\infty 
		\leq
		\sup _{0 \leq t\leq 1} E(t \varphi_1,\G) = E(t_0 \varphi_1,\G) = \frac{t_0^2\mu\lambda_1}{2} - t_0^p\Psi(\varphi_1,\G) < \frac{\mu\lambda_1}{2}.
		$$
	\end{proof}
	
	\begin{remark}\label{remckn}
		Following Section \ref{secp4}, for every $k=2,\ldots,m$ we can get the same estimate for $c_{\infty,k}$.
	\end{remark}
	
	Thus we are ready to prove Theorems \ref{thn3} and \ref{thn4}.
	
	\begin{proof}[Proof of Theorems \ref{thn3} and \ref{thn4}]
		As in the proof of Theorem \ref{th3}, using Lemma  \ref{lemnur} for $k=1$ and Lemma \ref{lemncin},  for $0<\mu \leq \mu^{**}_1$, there exists $u \in H^1_A(\G,\C)$ satisfying $E(u,\G)=c_\infty < \mu\lambda_1/2$, and one of the following two cases must hold:
		\begin{enumerate}[label=\rm(\roman*),ref=\roman*]
			\item \label{pfthn341} either $u$ is a critical point of $E(\cdot, \G)$ constrained on $H_\mu(\G)$ with Lagrange multiplier 
			$\lambda \in [0, \lambda_1)$
			\item \label{pfthn342} or $u$ is a critical point of $E(\cdot, \G)$ constrained on $H_\nu(\G)$ for some $0<\nu < \mu$ with Lagrange multiplier $\lambda =0$.
		\end{enumerate}
		Then, repeating the arguments of Lemma \ref{lemnonex} for $c=\lambda_1/2$, we conclude that, there exists no $u \in H_\nu(\G)$ such that $E'(u,\G) = 0$ and $E(u,\G)\leq \mu\lambda_1/2$. Hence, for any $0<\mu\leq\mu^*_0:=\min\{\mu_{0},\mu^{**}_1\}$, where $\mu_0:=\mu_{\lambda_1/2,p}$ (see \eqref{mucp}), case (\ref{pfthn342}) can not occur. This completes the proof of Theorem \ref{thn3}. 
		
		Instead, to prove Theorem \ref{thn4}, we can argue as in Section \ref{secp4}. For brevity, let us consider $k=2,\ldots,m$. By Lemma \ref{lemnur} and Remark \ref{remckn}, we get that, for all $0<\mu<\min\{\widetilde{\mu}_{k},\mu^{**}_k\}$ (as in Section \ref{secp4}, $0<\mu<\widetilde{\mu}_{k}$ is used to get the same estimate for $c_{\infty,i}$) and $2 \leq i \leq k$, there exists $u_{i} \in H_A^1(\G,\C)$ satisfying $E(u_{i},\G)=c_{\infty,i}$ with $c_{\infty,i-1}\leq c_{\infty,i}\leq \mu \lambda_i/2$  and one of the following two cases must hold:
		\begin{enumerate}[label=\rm(\roman*),ref=\roman*]
			\item \label{pfthn41} either $u_i$ is a critical point of $E(\cdot, \G)$ constrained on $H_\mu(\G)$ with Lagrange multiplier $\omega_i \in [0,2c_{\infty,i}/\mu]\subset[0,\lambda_i]$
			\item \label{pfthn42} or $u_i$ is a critical point of $E(\cdot, \G)$ constrained on $H_\nu(\G)$ for some $0<\nu < \mu$ with Lagrange multiplier $\omega_i =0$. 
		\end{enumerate}
		Repeating the arguments of Lemma \ref{lemnonex} for $c=\lambda_k/2$, we obtain that, if $\mu$ is small enough ($\mu\in(0,\mu^*_k)$ with $\mu^*_k:=\min\{\mu_{\lambda_{k}/2,p}, \widetilde{\mu}_{k},\mu^{**}_k\}$), for any $1\leq i\leq k$, (\ref{pfthn42}) can not occur. Thus, 
		for any $0<\mu<\mu^*_k$, $E( \cdot,\G)$ has at least $k$ critical points $u_1, u_2,\ldots,u_k$ constrained on $H_\mu(\G)$, which completes the proof of Theorem \ref{thn4}.
	\end{proof}

	\appendix
	\section{Regularity of the critical points of \texorpdfstring{$E$}{E}}\label{regularity}
	
	In this appendix we give some details about the regularity of our solutions.
	
	Assume that $A$ is continuously differentiable on every edge $e\in \E$ and $V$ is continuous on every edge $e\in \E$. For any compact graph $\G$ or noncompact graph $\G$ having a non-empty compact core $\K$, if $u\in H_A^1(\G,\C)$ is a critical point of $E( \cdot,\G)$ constrained on $H_\mu(\G)$, then $u$ is twice continuously differentiable on every edge $e\in \E$ and there exists a Lagrange multiplier $\lambda \in \R$ such that $u$ is a solution of equation \eqref{eqmagcompact} or equation \eqref{eqmagloc}, respectively.

	Indeed, let $u\in H_A^1(\G,\C)$ be a critical point of $E( \cdot,\G)$ constrained on $H_\mu(\G)$ with Lagrange multiplier $\lambda \in \R$. 
	For any edge $e \in \E$ and bounded interval $I\subset I_e$,  we have
	\begin{align*}
		\int_{I} \abs{u_e'}^2+\abs{u_e}^2\,dx
		&\leq 2\int_{I}\abs{D_{A_e}u_e}^2\,dx +2\int_{I}\abs{A_eu_e}^2\,dx +  \int_{I}V_e(x)\abs{u_e}^2\,dx\\
		&\leq2 \int_{I} \abs{D_{A_e}u_e}^2\,dx +2\norm{A_e}^2_{L^\infty(I,\R)}\norm{u_e}_{L^2(I,\R)}^2+  \int_{I} V_e(x)\abs{u_e}^2\,dx
	\end{align*}
	and thus, $u_e \in H^1(I,\C)$. Since $u$ is a constrained critical point of $E( \cdot,\G)$,  for any $\varphi \in C_0^\infty(I,\C)$, integrating by parts (see, e.g.,\cite[Corollary 8.10]{Ha}), we obtain
	\begin{equation*}
		\int_{I} u_e'\bar{\varphi}'+2iA_e(x)u_e'\bar{\varphi} + iA_e'(x)u_e\bar{\varphi}+(\abs{A_e(x)}^2+V_e(x))u_e\bar{\varphi} \,dx=\int_{I}\lambda u_e\bar{\varphi}+\chi_{\K,e}\abs{u_e}^{p-2}u_e\bar{\varphi} \,dx,
	\end{equation*}
	where $\chi_{\K,e}\equiv 1$ if $e\in \E$ is a bounded edge and $\chi_{\K,e}\equiv 0$ if $e\in \E$ is an unbounded edge.
	Then, by the definition of weak derivative, 
	\begin{equation}\label{eqderi}
		u_e''=
		-\lambda u_e
		-\chi_{\K,e}\abs{u_e}^{p-2}u_e
		+2iA_e(x)u_e'
		+ iA_e'(x)u_e
		+(\abs{A_e(x)}^2+V_e(x))u_e \quad 
		\text{on }I.
	\end{equation}
	Hence, $u_e'' \in L^2(I,\C)$, and it follows from \cite[Theorem 8.2]{Ha} that $u_e' \in C(I,\C)$. Then, by \cite[Remark 6 in Chapter 8]{Ha} and \eqref{eqderi}, we conclude that $u_e \in C^2(I,\C)$. Since $e \in \E$ and $I\subset I_e$ are arbitrary, $u_e$ is twice continuously differentiable for every edge $e\in \E$.
	\section*{Acknowledgements}
	Pietro d'Avenia is member of GNAMPA (INdAM) and is partially supported by the GNAMPA project {\em Problemi di ottimizzazione in PDEs da modelli biologici} (CUP E5324001950001).
	He has been financed by European Union - Next Generation EU - PRIN 2022 PNRR P2022YFAJH {\em Linear and Nonlinear PDE's: New directions and Applications} and is also supported by the Italian Ministry of University and Research under the Program Department of Excellence L. 232/2016 (CUP D93C23000100001). Chao Ji is supported by National Natural Science Foundation of China (No.12571117).

\end{document}